\documentclass[a4paper,12pt]{amsart}

\usepackage[foot]{amsaddr}
\usepackage[T1]{fontenc}
\usepackage[utf8]{inputenc}
\usepackage{palatino}
\usepackage{amsmath, amssymb,mathtools}
\usepackage[usenames,dvipsnames]{xcolor}
\usepackage{graphicx}
\usepackage{subfigure}
\usepackage{minitoc}
\usepackage{tikz}
\usepackage{enumerate}
\usepackage[margin=0.96 in]{geometry}
\usepackage{bbm}
\usepackage[numbers,sort&compress]{natbib}
\usepackage[colorlinks=true]{hyperref}
\hypersetup{urlcolor=blue, citecolor=red}
\usepackage{crossreftools}

\DeclarePairedDelimiter{\abs}{\lvert}{\rvert}
\DeclarePairedDelimiter{\norm}{\lVert}{\rVert}

\DeclarePairedDelimiter{\skp}{\langle}{\rangle}

\DeclareMathAlphabet{\mathup}{OT1}{\familydefault}{m}{n}
\newcommand{\dx}[1]{\mathop{}\!\mathup{d} #1}

\DeclarePairedDelimiter{\prt}{(}{)}
\DeclarePairedDelimiter{\brk}{[}{]}

\newcommand{\N}{{\mathbb N}}
\newcommand{\R}{{\mathbb R}}
\newcommand{\Rd}{{\mathbb R^d}}

\usepackage{amsthm}
\theoremstyle{plain}
\newtheorem{theorem}{Theorem}[section]
\newtheorem{lemma}[theorem]{Lemma}
\newtheorem{proposition}[theorem]{Proposition}
\newtheorem{corollary}[theorem]{Corollary}

\theoremstyle{remark}
\newtheorem{remark}[theorem]{\bf Remark}
\newtheorem{definition}[theorem]{\bf Definition}

\newcommand{\ds}{\displaystyle}

\newcommand{\partialt}[1]{\frac{\partial #1}{\partial t}}

\newcommand{\fpartial}[1]{\frac{\partial}{\partial #1}}

\newcommand{\ou}{\overline{u}_k}
\newcommand{\ov}{\overline{v}_k}
\newcommand{\ow}{\overline{w}_k}

\begin{document}

\title[Convergence of Numerical Methods]{Convergence of a discrete-in-time Approximation to a Degenerate Parabolic-Hyperbolic System}

\author{Julia Hauser$^{1}$}
\author{Hideki Murakawa$^{2}$}
\author{Markus Schmidtchen$^{3}$}

\address{$^{1}$ Joanneum Research Forschungsgesellschaft mbH, Steyrergasse 17, 8010, Graz, Austria.}
\address{$^{2}$ Faculty of Advanced Science and Technology, Ryukoku University, 1-5 Yokotani Seta Oe-cho Otsu Shiga 520-2194, Japan}
\address{$^{3}$ Institute of Scientific Computing, Faculty of Mathematics, TUD Dresden University of Technology, Germany.}

\maketitle
\begin{abstract}
	In this paper we consider an implicit semi-discrete approximation of a degenerate reaction-cross-diffusion system. Due to the symmetry in the parabolic part, this system is known to preserve segregation of densities -- initially non-overlapping densities belonging to different species remain segregated for all times, which leads to internal layers between different species. We show that time-discrete approximations exist and converge to a weak solution, as the timestep goes to zero.\\[0.5em]
\end{abstract}{}

\vskip .4cm
\begin{flushleft}
	\noindent{\makebox[1in]\hrulefill}
\end{flushleft}
2020 \textit{Mathematics Subject Classification:}  35B45, 35K55, 35K65, 35Q92, 65M12, 92C17
\newline\textit{Keywords and phrases.} : Degenerate parabolic cross-diffusion system, Time-discrete scheme, convergence\\[-2.em]
\begin{flushright}
	\noindent{\makebox[1in]\hrulefill}
\end{flushright}
\vskip 1.5cm

\section{Introduction}
We propose a semi-discrete scheme for the degenerate parabolic cross-diffusion system for two population densities, $u,v \geq 0$:
\begin{align}
	\label{eq:cts-system}
	\left\{
	\begin{array}{rl}
		\displaystyle \frac{\partial u}{\partial t} - \nabla \cdot\prt*{u \nabla w^\gamma} \!\!\!  & = uF_u(w)+vG_u(w), \\[1em]
		\displaystyle \frac{\partial v}{\partial t}  - \nabla \cdot\prt*{v \nabla w^\gamma} \!\!\! & = uF_v(w)+vG_v(w),
	\end{array}
	\right.
\end{align}
where  $w := u + v$ denotes the total population and $\gamma \ge 1$ is a constant. The system is posed on $\Omega_T:=\Omega \times (0,T)$, where $\Omega \subset \Rd$ is a bounded domain with smooth boundary $\partial \Omega$, and $T>0$. We impose no-flux boundary conditions on $\partial \Omega$. The system is equipped with nonnegative initial data $u_0,v_0 \in L^\infty(\Omega)$. Finally,  $F_u, G_u$ (resp. $F_v, G_v$) model the reaction phenomena of species $u$ (resp. $v$).

The system was first proposed in \cite{GP1984} as a model for populations that avoid overcrowding and in \cite{BT1983} for polymorphic populations in epidemiological contexts. Unlike linear diffusion models of Fisher-KPP type, cf. \cite{KPP1988,Fis1937}, the system is formally degenerate parabolic. Indeed, its one species counterpart reads
\begin{align*}
	\partialt w = \nabla \cdot(w \nabla w^\gamma) = \frac{\gamma}{\gamma + 1}\Delta w^{\gamma + 1},
\end{align*}
which is the well-known filtration equation or porous medium equation, see \cite{Vaz2007} and references therein. This equation can also be interpreted as a continuity equation where the velocity is related to the pressure gradient ($v = - \nabla p$) and the pressure is related to the density through the constitutive law $p = w^\gamma$.  A striking feature of the porous medium equation is its finite speed of propagation which is desirable from a biological point of view in contrast to linear diffusion which infinite speed of propagation. Furthermore, solutions to the porous medium equation lose smoothness at the boundary of their support unlike solutions to its linear counterpart.

Another sudden loss of regularity occurs due to the presence of other species. Indeed, initially segregated densities remain segregated for all times and, as their associated supports touch, a free internal layer forms \cite{Bertsch1987, Bertsch2012, Bertsch2020} across which the density exhibits jumps.

Despite a lot of analytical progress and a variety of new approaches to proving existence of solutions in recent years \cite{CFSS2018, GPS2019, BPPS2020, PX2020, LX2021, DHJ2023, DDMS2024, Jac2023, Jac2025}, there has only been little progress on numerical approximations. Indeed, to the best of our knowledge, the only numerical approximation of \eqref{eq:cts-system} is due to \cite{hopf2025convergence} where a fully discrete structure-preserving finite-volume method is presented and measure-valued solutions are constructed. Indeed, their numerical solution converges to a Young-measure solution which is shown to concentrate whenever System \eqref{eq:cts-system} admits a strong solutions by a weak-strong uniqueness argument. While it is know that local strong solutions exist \cite{DHJ2023}, solutions are, in general, not smooth. This is due to the aforementioned non-mixing property of \eqref{eq:cts-system}, i.e., densities with jumps across the interface between supports, cf. \cite{GP1984}.

In this paper,
we analyse a time-discrete implicit Euler approximation of System \eqref{eq:cts-system}.  Let $\tau = T/N$, with $N\in \mathbb{N}$, be the timestep size, and use the same initial data $u_0, v_0$ as in System~\eqref{eq:cts-system}. For $k=1,2,\dots, N$, we consider
\begin{align}
	\label{eq:intro-scheme}
	\begin{split}
		\left\{
		\begin{array}{lll}
			\dfrac1\tau(u_k - u_{k-1}) - \nabla \cdot\prt*{u_k \nabla w_k^\gamma} \!\!\!    & = R_u(u_k,v_k,w_k), \\[1em]
			\dfrac1\tau(v_k \, - v_{k-1})  - \nabla \cdot\prt*{v_k \nabla w_k^\gamma}\!\!\! & = R_v(u_k,v_k,w_k), \\
		\end{array}
		{\quad \mbox{in } \Omega},
		\right.
	\end{split}
\end{align}
with $w_k:=u_k+v_k$ and
equipped with zero Neumann conditions for both species.
In this paper, we show the existence of a weak solution of this scheme and its convergence to a weak solution of System \eqref{eq:cts-system}, as $\tau$ goes to zero.

We emphasize that this is not yet a fully discrete numerical scheme, since the spatial variable is not discretized.
A fully discrete scheme can be obtained by combining the present time discretization with a suitable spatial discretization.
There are several possible choices for the spatial discretization, such as finite difference methods, finite element methods, or finite volume methods, depending on the structure of the problem and the intended application.
The analysis of the time-discrete scheme therefore plays an important role as a fundamental step toward the construction and analysis of fully discrete numerical schemes.

The rest of this paper is organised as follows.
In the next section, we state notations and definitions, and give our main results.
In Section \ref{sec:existence-discrete-solns}, propose a regularised scheme, and show that it can be solved. Section \ref{sec:apriori} is dedicated to deriving estimates that are uniform in the regularisation parameter $\epsilon>0$ and the timestep $\tau >0$. Finally, in Section \ref{sec:compconv}, we show that we can remove the regularisation parameter and that the resulting time-discrete implicit Euler approximation converges to a weak solution of System \eqref{eq:cts-system}.

\section{Assumptions and main results.}
Throughout this paper, we impose the following assumptions.
\begin{enumerate}
	\renewcommand{\labelenumi}{(H\theenumi)}
	\item \label{H1} $\gamma \ge 1$.
	\item \label{H2} $u_0, v_0\in L^\infty(\Omega)$ and $w_0:=u_0+v_0$ satisfy $0\le u_0,v_0,w_0\le M$ for a positive constant $M$ and $w_0 \in  H^1(\Omega)$.
	\item \label{H3} $F_u,F_v,G_u$ and $G_v$ are continuous and satisfy
	      \begin{align*}
		      F(w) & := F_u(w)+F_v(w)\leq 0, \quad  \text{ for } w \geq  M, \\
		      G(w) & := G_u(w)+G_v(w)\leq 0, \quad  \text{ for } w \geq  M,
	      \end{align*}
	      and $\min(F_v(w),G_u(w))\geq 0$ for $0\leq w\leq M$.
	      We set
	      \begin{align}
		      \label{eq:def-RuRv}
		      R_u(u,v,w) := uF_u(w)+vG_u(w), \quad \text{and} \quad
		      R_v(u,v,w) := uF_v(w)+vG_v(w).
	      \end{align}
	      In addition, we assume
	      \begin{align}
		      \label{eq:RuRv_assumptions}
		      R_u(u,v,w) \geq 0, \quad \text{and} \quad R_v(u,v,w) \geq 0,
	      \end{align}
	      for all $0\leq u,v,w \leq M$. \\
\end{enumerate}

\noindent
System~\eqref{eq:cts-system} is understood in the following weak sense.
\begin{definition}[Weak solution of System~\eqref{eq:cts-system}]
	\label{def:weak-solution}
	A pair $(u,v)$ is called weak solution of System \eqref{eq:cts-system} if {$u, v \in L^\infty(\Omega_T)\cap H^1(0,T;H^1(\Omega)')$ with $w=u+v$, $w^\gamma \in L^2(0,T;H^1(\Omega))$}  such that
	\begin{subequations}
		\begin{align*}
			\int_0^T {\left\langle \frac{\partial u}{\partial t}, \varphi\right\rangle_{H^1(\Omega)', H^1(\Omega)} \dx t} + \int_0^T\int_\Omega u \nabla w^\gamma \cdot \nabla \varphi \dx x \dx t & = \int_0^T\int_\Omega R_u(u, v, w) \varphi\dx x \dx t, \\
			\int_0^T {\left\langle \frac{\partial v}{\partial t} , \psi\right\rangle_{H^1(\Omega)', H^1(\Omega)} \dx t} + \int_0^T\int_\Omega v \nabla w^\gamma \cdot \nabla \psi \dx x \dx t      & = \int_0^T\int_\Omega R_v(u, v, w) \psi\dx x \dx t,
		\end{align*}
	\end{subequations}
	for all $\varphi, \psi \in L^2(0,T;H^1(\Omega))$, and $u(0)=u_0$ as well as $v(0) = v_0$.
\end{definition}

We also consider the weak formulation of the time-discrete approximation, System \eqref{eq:intro-scheme}, as follows.

\begin{definition}[Weak solution of Scheme~\eqref{eq:intro-scheme}]
	Given initial data $u_0,v_0$, a sequence
	$
		(u_k,v_k)_{k=0}^N
	$
	is called a weak solution of Scheme~\eqref{eq:intro-scheme} if
	$u_k,v_k\in L^\infty(\Omega)$ with
	$w_k:=u_k+v_k$,
	$w_k^\gamma\in H^{1}(\Omega)$ such that
	\[
		\frac{1}{\tau} \int_\Omega (u_k-u_{k-1})\,\varphi\,\dx x
		+
		\int_\Omega u_k\nabla w_k^\gamma\cdot\nabla\varphi\,\dx x
		=
		\int_\Omega R_u(u_k,v_k,w_k)\,\varphi\,\dx x
	\]
	\[
		\frac{1}{\tau} \int_\Omega (v_k-v_{k-1})\,\psi\,\dx x
		+
		\int_\Omega v_k\nabla w_k^\gamma\cdot\nabla\psi\,\dx x
		=
		\int_\Omega R_v(u_k,v_k,w_k)\,\psi\,\dx x,
	\]
	for all $\varphi,\psi\in H^{1}(\Omega)$ and for every $k=1,2,\dots,N$.
\end{definition}

Subsequently, we will make use of the following interpolations of sequences of the approximations we construct.
\begin{definition}[Piecewise constant / linear interpolation]
	\label{def:pw-interpolations}
	For a sequence of functions $(f_k)_{k=0}^N$, with $f_k:\Omega \to \R$, for any $k=1, \ldots, N$,  we define
	\begin{align}
		\overline f_{\tau}(x,t) := f_{k}(x),
	\end{align}
	whenever $t\in (t^{k-1}, t^{k}]$ for $k=1,\ldots, N$, with $ \overline f_{\tau}(x,0) := f_{0}(x)$. We call $\overline f_\tau$ the piecewise constant interpolation of $(f_k)_{k=0}^N$. Moreover, we set
	\begin{align}
		\widehat f_{\tau}(x,t) := f_{k-1}(x) + \frac{t-t^{k-1}}{\tau} \big(f_{k}(x) - f_{k-1}(x)\big),
	\end{align}
	whenever $t\in (t^{k-1}, t^{k}]$, for $k=1,\ldots, N$, with $ \widehat f_{\tau}(x,0) := f_{0}(x)$, and call $\widehat f_\tau$ the piecewise linear interpolation of $(f_k)_{k=0}^N$.
\end{definition}

The main results of this paper is as follows.

\begin{theorem}[Existence of the weak solution of Scheme~\eqref{eq:intro-scheme}]
	\label{thm:existence-scheme}
	Let $\mathrm{(H1)}$--$\mathrm{(H3)}$ be satisfied. Then, there exists a weak solution of the scheme~\eqref{eq:intro-scheme}.
\end{theorem}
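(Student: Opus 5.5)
We argue by induction on $k$. Given $(u_{k-1},v_{k-1})$ with $0\le u_{k-1},v_{k-1}$, $w_{k-1}\le M$, we construct $(u_k,v_k)$ with the same bounds and $w_k^\gamma\in H^1(\Omega)$. Following the outline in the introduction, we do not solve the degenerate stationary problem directly. Instead we introduce a regularised problem with parameter $\epsilon>0$:
\begin{align*}
\frac1\tau(u-u_{k-1}) - \nabla\cdot\big((u^+ +\epsilon)\nabla p\big) - \epsilon\Delta u &= R_u(T(u),T(v),T(w)),\\
\frac1\tau(v-v_{k-1}) - \nabla\cdot\big((v^+ +\epsilon)\nabla p\big) - \epsilon\Delta v &= R_v(T(u),T(v),T(w)),
\end{align*}
with $p=\Phi(w)$, a smooth strictly increasing truncation of $w^\gamma$, and $T$ a cutoff to $[0,M]$ (so the reactions are bounded and continuous). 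This system is uniformly parabolic. Existence for fixed $\epsilon$ follows from the Leray--Schauder (or Schaefer) fixed point theorem on $L^2(\Omega)^2$. We freeze $(\tilde u,\tilde v)$ in the coefficients and reaction terms and solve the resulting linear elliptic system by Lax--Milgram, which is coercive thanks to the $\epsilon$-terms and the $1/\tau$ mass term. Compactness of the map comes from $H^1\hookrightarrow L^2$.

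\textbf{Step 1 (uniform bounds).} Adding the two equations, $w$ satisfies $\frac1\tau(w-w_{k-1}) - \nabla\cdot((w^++2\epsilon)\nabla p)-\epsilon\Delta w = R_u+R_v$. Testing with $(w-M)^+$ and using (H3), where $F,G\le 0$ for $w\ge M$, gives $w\le M$, because $\Phi$ is increasing. Testing each equation with $u^-$ (resp.\ $v^-$) and using \eqref{eq:RuRv_assumptions} together with the cutoff gives $u,v\ge 0$. The $\epsilon$-terms have the right sign in both tests. Hence the truncations are inactive, and $0\le u,v$, $w\le M$ uniformly in $\epsilon$.

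\textbf{Step 2 (gradient estimate).} Testing the $w$-equation with $p=w^\gamma$ (admissible since $w$ is bounded) gives
\[
\frac1\tau\int_\Omega (w-w_{k-1})w^\gamma\,dx + \int_\Omega (w+2\epsilon)\abs{\nabla w^\gamma}^2\,dx + \epsilon\int_\Omega \nabla w\cdot\nabla w^\gamma\,dx \le C .
\]
Convexity of $s\mapsto s^{\gamma+1}$ controls the first term from below by a telescoping quantity, so $\int w\abs{\nabla w^\gamma}^2$ is bounded uniformly in $\epsilon$ for fixed $\tau$. Since $w\abs{\nabla w^\gamma}^2 = c\,\abs{\nabla w^{\gamma+1/2}}^2$, this controls $w^{\gamma+1/2}$ in $H^1$. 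We then need to upgrade this to $w^\gamma\in H^1$. For this we would test with $-\Delta$-type quantities, or use an entropy: test with $\log$-type or $w^{\gamma-1}$ functions to get $\int\abs{\nabla w^\gamma}^2$. Testing the $w$-equation with $\ln(w+\epsilon)$-type functions yields $\int \abs{\nabla p}^2$ directly, since $(w+2\epsilon)\nabla p\cdot\nabla \ln(w+\epsilon)\ge \gamma w^{\gamma-1}\abs{\nabla w}^2\sim\abs{\nabla w^{(\gamma+1)/2}}^2$. Combining this with $w\le M$ and $\gamma\ge1$ bounds $\nabla w^\gamma = c\,w^{(\gamma-1)/2}\nabla w^{(\gamma+1)/2}$ in $L^2$.

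\textbf{Step 3 (passing $\epsilon\to0$).} We have $w_\epsilon^\gamma$ bounded in $H^1$, hence strongly compact in $L^2$, so $w_\epsilon\to w$ a.e. Meanwhile $u_\epsilon,v_\epsilon$ converge only weakly-$*$ in $L^\infty$. The hard terms are the products $u_\epsilon\nabla w_\epsilon^\gamma$ (weak times weak) and the reactions $u_\epsilon F_u(w_\epsilon)$, which are fine since $F_u(w_\epsilon)$ converges strongly. For the flux, I would exploit the non-mixing structure: the equation for $u_\epsilon$ tested against itself, or a compensated compactness/div--curl argument, in the spirit of \cite{GPS2019, PX2020}. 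The first approach is to write $u_\epsilon\nabla p_\epsilon$ and seek strong convergence of $\nabla p_\epsilon$ in $L^2$. To get it, test the $w$-equations at $\epsilon$ and at the limit with $p_\epsilon-p$ and use monotonicity of $w\mapsto w^\gamma$ together with a Minty-type argument on the total equation; this works because the $w$-equation is a closed scalar problem once $\nabla p$ converges weakly. Strong convergence of $\nabla p_\epsilon$ then makes $u_\epsilon\nabla p_\epsilon\rightharpoonup u\nabla p$. I expect this strong-gradient step to be the main obstacle. The $\epsilon\nabla u_\epsilon$ terms vanish, since testing the $u$-equation with $u_\epsilon$ gives $\sqrt\epsilon\nabla u_\epsilon$ bounded. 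Completing the induction over $k=1,\dots,N$ proves Theorem~\ref{thm:existence-scheme}.
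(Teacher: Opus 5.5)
Your architecture (regularise, fixed point, $L^\infty$ bounds, entropy estimate, then $\epsilon\to0$ via strong gradient convergence from testing the difference of the $w$-equations) is the paper's. However, the regularisation you chose breaks the two steps everything else rests on.

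\textbf{Positivity fails.} With mobility $u^++\epsilon$, the flux in the $u$-equation does not vanish on $\{u<0\}$; there it equals $\epsilon\Phi'(w)(\nabla u+\nabla v)$. Testing with $\min(u,0)$ gives
\[
\frac1\tau\norm{u^-}_{L^2(\Omega)}^2+\epsilon\norm{\nabla u^-}_{L^2(\Omega)}^2+\epsilon\int_{\{u<0\}}\Phi'(w)\abs{\nabla u}^2\dx x+\epsilon\int_{\{u<0\}}\Phi'(w)\,\nabla v\cdot\nabla u\dx x\le0 .
\]
The last integral has no sign, so your claim that the $\epsilon$-terms have the right sign is false. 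This is not merely a weakness of the test. Take zero reactions (admissible under (H3)), $u_{k-1}\equiv0$, and $v_{k-1}$ non-constant. Testing with $1$ gives $\int_\Omega u\dx x=0$, so $u\ge0$ would force $u\equiv0$. The $u$-equation then reduces to $\epsilon\int_\Omega\nabla p\cdot\nabla\varphi\dx x=0$ for all $\varphi$, so $p$, hence $w=v$, is constant. The $v$-equation then gives $v=v_{k-1}$, a contradiction. So your regularised problem has, in general, no nonnegative solution. You keep only $0\le w\le M$, which has three consequences:
\begin{itemize}
\item the cut-offs stay active;
\item there is no uniform $L^\infty$ bound on $u_\epsilon,v_\epsilon$, hence no weak-$*$ limits;
\item the induction hypothesis $u_{k-1},v_{k-1}\ge0$ cannot be propagated.
\end{itemize}
The paper instead keeps the mobility $\phi(u)(\phi(u)+\phi(v))^{\gamma-1}$, which vanishes on $\{u\le0\}$, and regularises only through a separate $-\epsilon\Delta u$. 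Then $\phi(u)\nabla(u)_-=0$ and the test with $-(u)_-$ closes.

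\textbf{The frozen problem is not coercive.} Since $\nabla\Phi(\tilde w)$ is undefined for $\tilde w\in L^2(\Omega)$, $\nabla p$ must stay in the unknown. The gradient part of your frozen system is then $\int_\Omega A\nabla(u,v):\nabla(\varphi,\psi)\dx x$ with
\[
A=\begin{pmatrix}\alpha+\epsilon&\alpha\\ \beta&\beta+\epsilon\end{pmatrix},\qquad \alpha=(\tilde u^++\epsilon)\Phi'(\tilde w),\quad \beta=(\tilde v^++\epsilon)\Phi'(\tilde w).
\]
The symmetric part of $A$ has determinant $\epsilon(\alpha+\beta)+\epsilon^2-\tfrac14(\alpha-\beta)^2$. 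This is negative, for example, where $\tilde u=M$ and $\tilde v=0$ once $\epsilon$ is small relative to $M\Phi'(M)$. The $1/\tau$ term controls only $L^2$ and cannot restore coercivity, so Lax--Milgram does not apply. You also omit the a priori bound on $\{x=\sigma\mathcal F(x),\ \sigma\in(0,1)\}$ that Schaefer's theorem requires.

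The paper's device is to carry a third, \emph{scalar} equation for $w$ in the linearisation and solve it first, with coefficient $\epsilon+\gamma(\phi(a)+\phi(b))^\gamma\ge\epsilon$. The $u$- and $v$-problems are then decoupled $\tau^{-1}-\epsilon\Delta$ problems with $\nabla w_k$ as data, and $u_k+v_k=w_k$ follows from uniqueness. The Schaefer bound comes from testing each equation with its own unknown.

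With that regularisation your Steps~2--3 go through essentially as in the paper, with one correction. Testing the difference of the $w$-equations with $p_\epsilon-p$ gives only $\int_\Omega w_\epsilon\abs{\nabla(p_\epsilon-p)}^2\dx x\to0$, not unweighted strong convergence of $\nabla p_\epsilon$. This suffices precisely because $0\le u_\epsilon\le w_\epsilon$. The paper instead tests with $w_\epsilon^{\gamma+1}-w^{\gamma+1}$ and writes $u_\epsilon\nabla w_\epsilon^\gamma=\frac{\gamma}{\gamma+1}\frac{u_\epsilon}{w_\epsilon}\nabla w_\epsilon^{\gamma+1}$.
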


\begin{theorem}[Convergence of the scheme]
	\label{thm:convergence}
	Let $\mathrm{(H1)}$--$\mathrm{(H3)}$ hold and let $(u_k,v_k)_{k=0}^N$ be the weak solution of Scheme~\eqref{eq:intro-scheme} and set $(w_k)_{k=0}^N=(u_k+v_k)_{k=0}^N$. Then, there exists subsequences of the associated interpolations (not relabeled) and a weak solution $(u,v)$ of System~\eqref{eq:cts-system} with $w=u+v$ such that
	\[
		\begin{aligned}
			(\overline{u}_{\tau},\overline{v}_{\tau},\overline{w}_{\tau})
			 & \overset{*}{\rightharpoonup} (u,v,w)
			 &                                      & \text{weakly-* in } L^\infty(\Omega_T),                                                  \\
			(\widehat{u}_{\tau},\widehat{v}_{\tau},\widehat{w}_{\tau})
			 & \rightharpoonup (u,v,w)
			 &                                      & \text{weakly in } H^1(0,T;H^1(\Omega)'),                                                 \\
			 &                                      &                                            & \text{and weakly-* in } L^\infty(\Omega_T), \\[0.4em]
			\overline{w}_{\tau},\widehat{w}_{\tau}
			 & \to w
			 &                                      & \text{strongly in } L^q(\Omega_T)
			\quad (1\le q<+\infty),                                                                                                            \\
			\overline{w}_{\tau}^{\,p},\widehat{w_{\tau}^{p}}
			 & \rightharpoonup w^p
			 &                                      & \text{weakly in } L^2(0,T;H^1(\Omega))
			\quad \left(\frac{\gamma+1}{2}\le p<+\infty\right),                                                                                \\
			\widehat{w_{\tau}^{\gamma+1}}
			 & \to w^{\gamma+1}
			 &                                      & \text{strongly in } C([0,T];L^q(\Omega))
			\quad (1\le q<+\infty),                                                                                                            \\
			\overline{w}_{\tau}^{\gamma+1},
			\widehat{w_{\tau}^{\gamma+1}}
			 & \to w^{\gamma+1}
			 &                                      & \text{strongly in }  L^2(0,T;H^1(\Omega)),
			\text{ and weakly-* in } L^\infty(0,T;H^1(\Omega)),
		\end{aligned}
	\]
	as $\tau \to 0$.
\end{theorem}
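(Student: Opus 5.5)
We prove Theorem~\ref{thm:convergence} by compactness, using a priori estimates that are uniform in $\tau$ and then passing to the limit in the weak formulation.

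\textbf{A priori bounds.}
We first derive bounds that do not depend on $\tau$.

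\emph{Pointwise bounds.} By (H3) and a discrete comparison/truncation argument (test with $(w_k-M)_+$ and $(u_k)_-$, $(v_k)_-$), we get $0\le u_k,v_k$ and $w_k\le M$. This yields the weak-* bounds in $L^\infty(\Omega_T)$.

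\emph{Equation for $w$.} Summing the two equations gives
\[
\tfrac1\tau(w_k-w_{k-1})-\nabla\cdot(w_k\nabla w_k^\gamma)=R_u+R_v .
\]
Since $w\nabla w^\gamma=\tfrac{\gamma}{\gamma+1}\nabla w^{\gamma+1}$, this is a porous-medium step for $w$ alone.

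\emph{Energy estimates.}
\begin{itemize}
\item Testing with $w_k^\gamma$ gives $\sum_k\tau\|\nabla w_k^{(2\gamma+1)/2}\|_2^2\le C$. Combined with the $L^\infty$ bound, this gives $L^2H^1$ bounds for $\overline w_\tau^{\,p}$ whenever $p\ge(\gamma+1)/2$. To get this, we test with $w_k^{2p-\gamma-1}$ and use the elementary inequality $(a-b)a^{q}\ge \frac{1}{q+1}(a^{q+1}-b^{q+1})$.
\item The key estimate comes from testing the $w$-equation with $\tfrac1\tau(w_k^{\gamma+1}-w_{k-1}^{\gamma+1})$. Convexity gives $\tfrac{\gamma}{\gamma+1}\nabla w_k^{\gamma+1}\cdot\nabla(w_k^{\gamma+1}-w_{k-1}^{\gamma+1})\ge \tfrac{\gamma}{2(\gamma+1)}(|\nabla w_k^{\gamma+1}|^2-|\nabla w_{k-1}^{\gamma+1}|^2)$. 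The time-difference term is nonnegative and controls $\tau^{-1}\int (w_k-w_{k-1})(w_k^{\gamma+1}-w_{k-1}^{\gamma+1})\gtrsim \tau^{-1}\|w_k^{(\gamma+2)/2}-w_{k-1}^{(\gamma+2)/2}\|_2^2$. The reaction term is absorbed by Young's inequality.
\end{itemize}
This yields $\sup_k\|\nabla w_k^{\gamma+1}\|_2\le C$, where (H2) $w_0\in H^1$ is needed. It also yields a bound on $\partial_t\widehat{w^{(\gamma+2)/2}_\tau}$ in $L^2(\Omega_T)$, and hence on $\partial_t\widehat{w_\tau^{\gamma+1}}$ in $L^2$, since $w\le M$.

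\emph{Time derivatives.} From the equations, the fluxes $u_k\nabla w_k^\gamma$ are bounded in $L^2$, because $u_k\le w_k$ and $w_k|\nabla w_k^\gamma|\sim|\nabla w_k^{\gamma+1}|$. Hence $\partial_t\widehat u_\tau,\partial_t\widehat v_\tau$ are bounded in $L^2(0,T;H^1(\Omega)')$.

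\textbf{Compactness and identification of limits.}
Aubin--Lions, applied to $\widehat{w_\tau^{\gamma+1}}$ (bounded in $L^\infty H^1\cap H^1L^2$), gives strong convergence in $C([0,T];L^q)$. Inverting the monotone map $s\mapsto s^{1/(\gamma+1)}$ gives strong convergence of $\widehat w_\tau$ and $\overline w_\tau$ in $L^q$. Here the difference $\overline w_\tau-\widehat w_\tau$ is controlled by the $\sqrt\tau$ time-increment bound.

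Weak limits of $\overline w_\tau^{\,p}$ are then identified as $w^p$ by the strong convergence. The limits of $u,v$ are only weak, but products like $\overline u_\tau\nabla\overline w^\gamma_\tau$ must still be handled. For this we rewrite
\[
\overline u_\tau\nabla \overline w_\tau^\gamma=\tfrac{\gamma}{\gamma+1}\,\tfrac{\overline u_\tau}{\overline w_\tau}\nabla\overline w_\tau^{\gamma+1},
\]
which pairs a weakly converging bounded factor $\overline u_\tau/\overline w_\tau$ with $\nabla\overline w_\tau^{\gamma+1}$. So we need strong convergence of $\nabla\overline w_\tau^{\gamma+1}$ in $L^2$.

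\textbf{Main obstacle: strong $L^2H^1$ convergence.}
This is the hardest step. We plan an energy-equality argument. From the discrete estimate,
\[
\limsup\tfrac{\gamma}{2(\gamma+1)}\|\nabla\overline w_\tau^{\gamma+1}\|^2_{L^2(\Omega_T)}\le \text{(data)}+\lim\int R\,\partial_t w^{\gamma+1}\text{-type terms}-\liminf(\text{time term}).
\]
We compare this with the identity satisfied by the limit $w$, which solves the porous-medium equation with the limiting source $R_u+R_v$. That identity is obtained by testing with $\partial_t w^{\gamma+1}$, which is justified since $\partial_t w^{(\gamma+2)/2}\in L^2$ and $w^{\gamma+1}\in L^\infty H^1$.

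One subtlety is the source term. $R_u+R_v=uF(w)+vG(w)$ converges only weakly, because $u$ converges only weakly. It is, however, paired with $\partial_t \widehat{w^{\gamma+1}}$, which also converges only weakly. To handle this, we use the time-discrete Minty/lower-semicontinuity trick in integrated form: integrate the $w$-identity in time, writing $\int_0^t\!\int R\,\partial_t W$ via the weak equation with test function $W=\overline w^{\gamma+1}$. This replaces the pairing by $\int \nabla W\cdot\nabla W$ terms and terms converging by strong convergence of $W$ in $C(L^q)$.

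With this, the norms converge, and weak convergence plus convergence of norms gives strong convergence of $\nabla\overline w_\tau^{\gamma+1}$. Passing to the limit in the discrete weak formulation then gives Definition~\ref{def:weak-solution}. The reaction terms pass to the limit as (weak) $\times$ (strong $F(\overline w_\tau)$), and the initial data are attained by the $C([0,T];H^1(\Omega)')$ convergence of $\widehat u_\tau,\widehat v_\tau$.
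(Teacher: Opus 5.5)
\textbf{Genuine gap in the main step: strong convergence of $\nabla\overline w_\tau^{\,\gamma+1}$.}
You base this step on the $H^1$-level energy obtained by testing with $\partial_t W$, where $W=w^{\gamma+1}$. The right-hand side of that identity contains $\int_0^t\int_\Omega \overline R_\tau\,\partial_t\widehat{w_\tau^{\gamma+1}}$ with $\overline R_\tau=\overline u_\tau F(\overline w_\tau)+\overline v_\tau G(\overline w_\tau)$. After writing $\overline v_\tau=\overline w_\tau-\overline u_\tau$, the piece $\overline u_\tau\,(F-G)(\overline w_\tau)\,\partial_t\widehat{w_\tau^{\gamma+1}}$ is a product of two factors that converge only weakly. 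Indeed, $\overline u_\tau$ converges strongly at best in $L^2(0,T;H^1(\Omega)')$, while $\partial_t\widehat{w_\tau^{\gamma+1}}$ is merely bounded in $L^2(\Omega_T)$ with no spatial regularity. So the product cannot be passed to the limit.

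Your remedy does not remove this. Substituting the equation for $\overline R_\tau$ just returns the identity you started from. Summation by parts in time moves the difference quotient onto $\overline R_\tau$, for which there is no estimate, since $F,G$ are only continuous. Testing the equation with $W$ controls $\int\overline R_\tau\,\overline w_\tau^{\,\gamma+1}$, not $\int\overline R_\tau\,\partial_t\widehat{w_\tau^{\gamma+1}}$. As written, convergence of the norms is therefore not established. In addition, the limit $H^1$-energy identity you compare with needs a chain rule for $\frac{\mathrm d}{\mathrm dt}\|\nabla w^{\gamma+1}\|_{L^2}^2$. The regularity you list does not justify it, since $\partial_t w^{\gamma+1}\notin L^2(0,T;H^1(\Omega))$ in general and $\partial_t w$ is not known to lie in $L^2(\Omega_T)$.

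\textbf{How to close it.}
The fix, which is the paper's argument, is to use the lower-order energy. Subtract the limit equation $\partial_t w=\frac{\gamma}{\gamma+1}\Delta w^{\gamma+1}+R$ from the discrete one and test with $\overline w_\tau^{\,\gamma+1}-w^{\gamma+1}$.
\begin{itemize}
\item The source contributes $\int_0^T\int_\Omega(\overline R_\tau-R)(\overline w_\tau^{\,\gamma+1}-w^{\gamma+1})\to0$, since it is a bounded factor times a strongly convergent one.
\item The cross terms $\int_0^T\langle\partial_t\widehat w_\tau,w^{\gamma+1}\rangle$ and $\int_0^T\langle\partial_t w,\overline w_\tau^{\,\gamma+1}\rangle$ converge by weak convergence.
\item For the diagonal term, convexity of $s\mapsto s^{\gamma+2}/(\gamma+2)$ gives $-\int_0^T\langle\partial_t\widehat w_\tau,\overline w_\tau^{\,\gamma+1}\rangle\le-\frac1{\gamma+2}\big(\int_\Omega w_N^{\gamma+2}-\int_\Omega w_0^{\gamma+2}\big)$.
\end{itemize}
The limit of the last bound follows from the $C([0,T];L^q(\Omega))$ convergence of $\widehat{w_\tau^{\gamma+1}}$ evaluated at $t=T$. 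It equals $-\int_0^T\langle\partial_t w,w^{\gamma+1}\rangle$, which needs only the lower-order chain rule. Altogether $\limsup_{\tau\to0}\|\nabla(\overline w_\tau^{\,\gamma+1}-w^{\gamma+1})\|_{L^2(\Omega_T)}\le0$. Your $\partial_t W$ test is needed only for the a priori bounds, not for this identification.

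\textbf{Two smaller points.}
First, the case $p=(\gamma+1)/2$ needs the entropy (logarithmic) estimate, because your test function $w_k^{2p-\gamma-1}$ degenerates to a constant there. Second, the bounds $0\le u_k,v_k$ and $w_k\le M$ cannot be obtained by testing the unregularised scheme with $(u_k)_-$. That function need not lie in $H^1(\Omega)$, and without the cut-off the flux term $\int u_k\nabla w_k^\gamma\cdot\nabla(u_k)_-$ has no sign. These bounds must instead be inherited from the regularised construction.
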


\section{Existence of Solutions to the Scheme}
\label{sec:existence-discrete-solns}
In order to prove the existence of solutions to Scheme~\eqref{eq:intro-scheme}, let us consider an elliptic regularisation first. Specifically, we consider the following 3-component system
\begin{subequations}
	\label{eq:scheme}
	\begin{align}
		\dfrac1\tau(w_k - w_{k-1}) - \epsilon \Delta w_k - \dfrac{\gamma}{\gamma +1} \Delta w_k^{\gamma + 1}
		 & = (R_{u}+R_v )(u_k,v_k,w_k),\label{eq:scheme:w} \\[0.5em]
		\dfrac1\tau(u_k - u_{k-1}) - \epsilon \Delta u_k - \nabla \cdot\prt*{u_k \nabla w_k^\gamma}
		 & = R_u(u_k,v_k,w_k),\label{eq:scheme:u}          \\[0.5em]
		\dfrac1\tau(v_k - v_{k-1}) - \epsilon \Delta v_k - \nabla \cdot\prt*{v_k \nabla w_k^\gamma}
		 & = R_v(u_k,v_k,w_k),\label{eq:scheme:v}
	\end{align}
\end{subequations}
in $\Omega$ equipped with zero Neumann data on the boundary $\partial \Omega$. This problem is also  understood in the weak sense. We can show that this regularized scheme admits a solution.

\begin{lemma}
	\label{lem:buffer-this-didnt-have-a-label}
	Let $\mathrm{(H1)}$--$\mathrm{(H3)}$ be satisfied. Then, for any $\epsilon>0$ and  $\tau>0$, there exists a weak solution $(u_{k,\epsilon}, v_{k,\epsilon}, w_{k,\epsilon})\in [H^1(\Omega)\cap L^\infty(\Omega)]^3$ of System \eqref{eq:scheme} such that $0\le u_{k,\epsilon}, v_{k,\epsilon}, w_{k,\epsilon}\le M$ and $w_{k,\epsilon}=u_{k,\epsilon}+v_{k,\epsilon}$.
\end{lemma}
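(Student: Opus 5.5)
We will prove the lemma by induction on $k$ and a Leray--Schauder (Schaefer) fixed-point argument applied to a truncated, decoupled version of System~\eqref{eq:scheme}. Assume $u_{k-1},v_{k-1}\in L^\infty(\Omega)$ with $0\le u_{k-1},v_{k-1}$ and $w_{k-1}=u_{k-1}+v_{k-1}\le M$; for $k=1$ this is (H2). Introduce the cut-off $T_M(s):=\min\{\max\{s,0\},M\}$. For a given pair $(\bar u,\bar v)\in [L^2(\Omega)]^2$ set $\bar w:=T_M(\bar u)+T_M(\bar v)$, truncated again by $T_M$. We then solve three problems in sequence.

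\textbf{Step 1 (the $w$-equation).} Solve the monotone elliptic problem
\[
\tfrac1\tau w-\epsilon\Delta w-\tfrac{\gamma}{\gamma+1}\Delta T_M(w)^{\gamma+1}=\tfrac1\tau w_{k-1}+(R_u+R_v)(T_M\bar u,T_M\bar v,\bar w).
\]
Since $\epsilon>0$, this is uniformly elliptic, and the operator $w\mapsto -\epsilon\Delta w-\frac{\gamma}{\gamma+1}\Delta T_M(w)^{\gamma+1}$ is monotone. Minty--Browder therefore yields a unique $w\in H^1(\Omega)$.

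\textbf{Step 2 (the $u$- and $v$-equations).} Solve the linear problems
\[
\tfrac1\tau u-\epsilon\Delta u-\nabla\cdot\big(T_M(u)\,\nabla T_M(w)^\gamma\big)=\tfrac1\tau u_{k-1}+R_u(\cdots),
\]
and similarly for $v$. Here we freeze the coefficient, replacing $T_M(u)$ by $T_M(\bar u)$ so that the problem becomes linear in $u$ with an $L^2$ drift term. Lax--Milgram applies because $\nabla w\in L^2$ and the prefactor is bounded, so the drift term enters only on the right-hand side as $\nabla\cdot(T_M\bar u\,\nabla T_M(w)^\gamma)\in H^1(\Omega)'$.

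\textbf{Step 3 (fixed point).} The map $\Phi:(\bar u,\bar v)\mapsto(u,v)$ sends $[L^2]^2$ into a bounded set of $[H^1]^2$, with bounds depending on $\epsilon,\tau,M$ but not on $(\bar u,\bar v)$, thanks to the truncations. By Rellich this set is compact in $L^2$. Continuity of $\Phi$ follows from continuity of $R_u,R_v$ (dominated convergence) and stability of the monotone and linear solves. Schauder's theorem then gives a fixed point.

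\textbf{Step 4 (removing truncations and identifying $w=u+v$).} This is the main obstacle. First, testing the $u$-equation with $u^-$ gives nonnegativity: the drift term contains $T_M(u)=0$ on $\{u<0\}$, and $R_u(T_Mu,T_Mv,\bar w)\ge0$ by \eqref{eq:RuRv_assumptions}. The same argument gives $v\ge0$.

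Second, we need $w\le M$. Testing the $w$-equation with $(w-M)^+$ gives
\[
\tfrac1\tau\int (w-w_{k-1})(w-M)^+ + \text{(nonneg. diffusion)}\le\int (R_u+R_v)(w-M)^+.
\]
The first term dominates $\frac1\tau\|(w-M)^+\|^2$ since $w_{k-1}\le M$. The sign of the right-hand side is the delicate point. If we choose $\bar w$ truncated at $M$, the reaction is evaluated inside $[0,M]$, where only $R\ge0$ is known, which does not yield $(w-M)^+=0$. I would therefore modify the construction so that the reaction in the $w$-equation is evaluated at $w$ itself (with cutoff $T_M$ only in $u,v$) and use $F,G\le0$ for $w\ge M$: on $\{w>M\}$ the right-hand side is $T_Mu\,F(w)+T_Mv\,G(w)\le0$.

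Third, subtracting the sum of the $u$- and $v$-equations from the $w$-equation, $z:=w-u-v$ satisfies
\[
\tfrac1\tau z-\epsilon\Delta z-\nabla\cdot\big((T_M(w)-T_Mu-T_Mv)\nabla T_M(w)^\gamma\big)=0,
\]
after using $\frac{\gamma}{\gamma+1}\Delta w^{\gamma+1}=\nabla\cdot(w\nabla w^\gamma)$ on $0\le w\le M$. With $\bar u=u$, $\bar v=v$, the coefficient vanishes where the truncations are inactive. To close the argument, I would test with $z$ and use $|T_M(w)-T_Mu-T_Mv|\le|w-u-v|$ together with a Gronwall/absorption argument. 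This requires $\nabla w^\gamma\in L^\infty$, which is not available. If that fails, I would instead build $u+v=w$ into the construction by solving only for $(u,w)$ and setting $v:=w-u$, showing $0\le u\le w$ by testing with $u^-$ and $(u-w)^+$, where the drift terms combine into $\nabla\cdot((u-w)\nabla w^\gamma)$. Once all bounds hold, every truncation is inactive, and the fixed point is the desired solution.
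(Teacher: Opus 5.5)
There is a genuine gap, and it is where you suspected: Step 4 never proves $w=u+v$. Without this identity you can neither remove the cut-offs in the $u$- and $v$-equations nor pass from $w\le M$ to $u,v\le M$, so the lemma is not reached.

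Your first attempt fails for the reason you give: absorbing $\int (T_M(w)-T_Mu-T_Mv)\nabla T_M(w)^\gamma\cdot\nabla z\,dx$ would need $\nabla w^\gamma\in L^\infty$. In addition, the pointwise inequality $|T_M(w)-T_Mu-T_Mv|\le|w-u-v|$ is false; take $u=v=M$, $w=2M$.

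The fallback has the same defect. Subtracting the $w$-equation from the $u$-equation and testing with $(u-w)^+$ leaves the drift term $\int (u-w)^+\nabla w^\gamma\cdot\nabla(u-w)^+\,dx=\frac12\int\nabla w^\gamma\cdot\nabla|(u-w)^+|^2\,dx$. This has no sign, and it cannot be absorbed without an $L^\infty$ bound on $\nabla w^\gamma$ or a one-sided bound on $\Delta w^\gamma$. It would close only if the truncation made the combined drift coefficient vanish identically on $\{u\ge w\}$, for example by clipping the coefficient in the $u$-drift to $[0,T_M(w)]$. Your proposal does not do this.

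The missing idea, which the paper uses, is to enforce $w=u+v$ already at the linearised level. You do this by freezing the $w$-diffusion coefficient in terms of the $u$- and $v$-inputs. For given $(a,b,c)\in L^2(\Omega)^3$, solve by Lax--Milgram
\[
\tau^{-1}(w-w_{k-1})-\epsilon\Delta w-\gamma\nabla\cdot\big((\phi(a)+\phi(b))^\gamma\nabla w\big)=(R_u+R_v)(\phi(a),\phi(b),\phi(c)),
\]
and then
\[
\tau^{-1}(u-u_{k-1})-\epsilon\Delta u-\gamma\nabla\cdot\big(\phi(a)(\phi(a)+\phi(b))^{\gamma-1}\nabla w\big)=R_u(\phi(a),\phi(b),\phi(c)),
\]
and analogously for $v$ with $\phi(b)$ in the drift. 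The sum of the $u$- and $v$-problems is exactly the $w$-problem. Hence $q=u+v-w$ satisfies $q/\tau-\epsilon\Delta q=0$ and vanishes for every input, with no control of $\nabla w^\gamma$ needed. This also bypasses your Step~1, where the claim that $w\mapsto-\epsilon\Delta w-\frac{\gamma}{\gamma+1}\Delta T_M(w)^{\gamma+1}$ is monotone is incorrect in the $H^1$--$(H^1)'$ pairing; you would need a Kirchhoff transform.

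Your diagnosis of the bound $w\le M$ is also slightly off. Truncating at $M$ is harmless, because (H3) gives $F(M),G(M)\le 0$. The real problem is that your $\bar w$ is computed from $(\bar u,\bar v)$ instead of being a fixed-point variable. If you take $c$ as the third fixed-point variable, then at the fixed point the reaction on $\{w>M\}$ equals $\phi(u)F(M)+\phi(v)G(M)\le 0$, and testing with $(w-M)_+$ gives $w\le M$. Nonnegativity of $u,v$ then follows as in your argument, since the drift coefficient $\phi(u)(\cdots)$ vanishes on $\{u<0\}$. Finally, $w=u+v$ yields $u,v\le M$, and every cut-off drops out.
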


In this section, we consider the weak solution $(u_{k,\epsilon}, v_{k,\epsilon}, w_{k,\epsilon})$ of Scheme~\eqref{eq:scheme}, yet, we shall omit the subscript $\epsilon$ and simply write $(u_k, v_k, w_k)$ for ease of notation.

Before we begin the proof, let us introduce the cut-off function, $\phi$, defined via
\begin{align*}
	\phi(r) = \max(0, \min(r,M)) =
	\begin{cases}
		0, & \text{ if } r \leq 0,  \\
		r, & \text{ if } 0 < r <M,  \\
		M, & \text{ if } r \geq  M,
	\end{cases}
\end{align*}
for $r \in \R$.
\begin{proof}
	We proceed by induction. For some $k \in \N$, $k\geq 1$, let $(u_{k-1},v_{k-1})\in L^\infty(\Omega)\times L^\infty(\Omega)$ with $w_{k-1} \in H^1(\Omega) \cap L^\infty(\Omega)$ such that $0 \leq u_{k-1}, v_{k-1}, w_{k-1} \leq M$.

	The construction of $(u_k, v_k, w_k) \in [H^1(\Omega)\cap L^\infty(\Omega)]^3$ is done in three steps. First, we show existence of solutions to a system in which the nonlinearity is frozen and by employing the Lax-Milgram theorem. Next, using a fixed-point argument, we show that there is a solution to the nonlinear problem. In the end, we show positivity and $L^\infty(\Omega)$ boundedness, which allows us to remove the cut-off.

	\textbf{Step 1. -- Existence of solutions to the linearised scheme}\\
	Let $(a,b,c) \in L^2(\Omega)^3$ be given and fixed and consider
	\begin{subequations}
		\label{eq:LinScheme}
		\begin{align}
			\label{eqn:pdew}
			\frac{ w_k -w_{k-1}}{\tau}- \epsilon \Delta w_k - \gamma \nabla \cdot ((\phi(a)+\phi(b))^{\gamma} \nabla w_k) & = (R_{u}+R_v )(\phi(a),\phi(b),\phi(c)),
		\end{align}
		\begin{align}
			\label{eqn:pdeu}
			\frac{ u_k -u_{k-1}}{\tau}- \epsilon \Delta u_k & = \gamma \nabla \cdot (\phi(a)(\phi(a)+\phi(b))^{\gamma-1} \nabla w_k) +  R_{u}(\phi(a),\phi(b),\phi(c)),
		\end{align}
		\begin{align}
			\label{eqn:pdev}
			\frac{ v_k -v_{k-1}}{\tau}- \epsilon \Delta v_k & = \gamma \nabla \cdot (\phi(b)(\phi(a)+\phi(b))^{\gamma-1} \nabla w_k)  +  R_v (\phi(a),\phi(b),\phi(c)),
		\end{align}
	\end{subequations}
	posed in $\Omega$ and equipped with zero Neumann data on $\partial \Omega$, each. In order to show solvability of Eq. \eqref{eqn:pdew} we will use the Lax-Milgram theorem. Indeed, Eq. \eqref{eqn:pdew} is to be understood in a weak sense, i.e., there has to hold
	\begin{align}
		\label{eqn:weakformw}
		\begin{split}
			\frac{1}{\tau}\int_\Omega w_k \varphi \dx x
			 & + \epsilon\int_\Omega \nabla w_k \cdot \nabla \varphi \dx x
			+ \gamma\int_\Omega (\phi(a)+\phi(b))^{\gamma}
			\nabla w_k \cdot \nabla \varphi \dx x                          \\
			 & =
			\int_\Omega
			\brk*{
				\frac{w_{k-1}}{\tau}
				+ \phi(a)F(\phi(c))
				+ \phi(b)G(\phi(c))
			}
			\varphi \dx x,
		\end{split}
	\end{align}
	for any $\varphi \in H^1(\Omega)$. For $f, g \in H^1(\Omega)$, let us define the bilinear form
	\begin{align*}
		\mathcal B_w [f, g] := \frac{1}{\tau}\int_\Omega f g \dx x	+ \epsilon \int_\Omega  \nabla f \cdot \nabla g \dx x + \gamma \int_\Omega (\phi(a)+\phi(b))^{\gamma}  \nabla f \cdot   \nabla g \dx x.
	\end{align*}
	Then $\mathcal B_w$ is bounded in $H^1(\Omega)$ since
	\begin{align*}
		\abs{\mathcal B_w [f,g]} \leq \frac{1}{\tau}\norm{f}_{L^2(\Omega)} \norm{g}_{L^2(\Omega)} + (\epsilon + \gamma(2M)^{\gamma})\norm{f}_{H^1(\Omega)}\norm{g}_{H^1(\Omega)},
	\end{align*}
	for $f, g\in H^1(\Omega)$. On the other hand, $\mathcal B_w$ is coercive in $H^1(\Omega)$ since $\phi(\cdot) \geq 0$ and therefore
	\begin{align*}
		\mathcal B_w [f,f] & \geq \frac{1}{\tau}\norm{f}^2_{L^2(\Omega)} + \epsilon \norm{\nabla f}_{L^2(\Omega)}^2 \\
		                   & \geq \min (\tau^{-1},\epsilon) \norm{f}^2_{H^1(\Omega)}.
	\end{align*}
	Thus, Eq. \eqref{eqn:pdew} can be written as
	\begin{align*}
		\mathcal B_w [w_k, \varphi] = \int_\Omega \brk*{\frac{w_{k-1}}{\tau} + \phi(a) F(\phi(c))+\phi(b) G(\phi(c))}\varphi \dx x,
	\end{align*}
	where the right-hand side is in $L^2(\Omega)$  since $F_u,F_v,G_u$ and $G_v$ are continuous and $(R_{u}+R_v )\circ \phi$ is bounded. Hence, we can apply the Lax-Milgram theorem and obtain a unique solution $w_k$ of Eq. \eqref{eqn:weakformw} such that
	\begin{align*}
		\norm{w_k}_{H^1(\Omega)}
		\leq
		\frac{1}{\min\{1,\tau\epsilon\}}
		\norm{
			w_{k-1}
			+\tau\phi(a)F(\phi(c))
			+\tau\phi(b)G(\phi(c))
		}_{L^2(\Omega)} .
	\end{align*}
	Now, let us turn our attention to the remaining two equations beginning with Eq. \eqref{eqn:pdeu}. We recall the variational formulation
	\begin{align*}
		\frac{1}{\tau} & \int_\Omega u_k \varphi \dx x + \epsilon \int_\Omega \nabla u_k \cdot \nabla \varphi \dx x                                                        \\
		               & = -\gamma\int_\Omega \phi(a)(\phi(a)+\phi(b))^{\gamma-1} \nabla w_k \cdot  \nabla\varphi \dx x  +\frac{1}{\tau} \int_\Omega u_{k-1} \varphi \dx x \\
		               & \quad +  \int_\Omega \big[\phi(a)F_u(\phi(c))+ \phi(b)G_u(\phi(c))\big]\varphi \dx x,
	\end{align*}
	for all $\varphi \in H^1(\Omega)$. Indeed, employing the Lax-Milgram theorem again, we shall show that there exists a unique solution, $u_k \in H^1(\Omega)$. Similar to before, we introduce
	\begin{align*}
		\mathcal B_u[f,g] & := \frac{1}{\tau} \int_\Omega f g \dx x + \epsilon \int_\Omega \nabla f\cdot \nabla g \dx x,
	\end{align*}
	for $f, g\in H^1(\Omega)$, and set
	\begin{align*}
		\mathcal R[g] :=
		 & - \gamma \int_\Omega \phi(a)(\phi(a)+\phi(b))^{\gamma-1} \nabla w_k \cdot \nabla g \dx x +\frac{1}{\tau} \int_\Omega u_{k-1} g \dx x \\
		 & + \int_\Omega \big(\phi(a)F_u(\phi(c))+ \phi(b)G_u(\phi(c))\big)  g\dx x,
	\end{align*}
	where we stress that the definition of $\mathcal R$ involves the unique solution, $w_k$, of Eq. \eqref{eqn:pdew}. Then, we rewrite the variational formulation in the concise form
	\begin{align*}
		\mathcal B_u[u_k, \varphi] = \mathcal R[\varphi].
	\end{align*}
	We readily verify that both
	\begin{align*}
		\abs*{\mathcal B_u[f, g]} & \leq  \left(\frac{1}{\tau}+\epsilon\right) \norm{f}_{H^1(\Omega)} \norm{g}_{H^1(\Omega)},
	\end{align*}
	and
	\begin{align*}
		\mathcal B_u[f,f] & \geq  \min \left(\tau^{-1},\epsilon\right) \norm{f}_{H^1(\Omega)}^2,
	\end{align*}
	as well as
	\begin{align*}
		\abs{\mathcal R [\varphi]} & \leq C\norm{\varphi}_{H^1(\Omega)},
	\end{align*}
	with
	$$
		C := \gamma M(2M)^{\gamma-1}\norm{\nabla w_k}_{L^2(\Omega)} + \frac{1}{\tau} \norm{u_{k-1}}_{L^2(\Omega)} +\|\phi(a)F_u(\phi(c))+ \phi(b)G_u(\phi(c))\|_{L^2(\Omega)}.
	$$
	Hence, the conditions of the Lax-Milgram theorem are met and we infer the existence of a unique $H^1(\Omega)$-solution, $u_k$, of Eq. \eqref{eqn:pdeu}. Then, the existence of a unique $v_k\in H^1(\Omega)$ solving Eq. \eqref{eqn:pdev} follows in the same vein.
	We set $z_k := u_k + v_k$, observe that $z_k$ and $w_k$ satisfy the same equation, and note that the difference $q_k:=z_k-w_k$ satisfies
	\[
		\frac{q_k}{\tau}-\epsilon\Delta q_k=0,
	\]
	in $H^1(\Omega)'$, together with homogeneous Neumann boundary conditions. Testing by $q_k$ gives
	\[
		\frac{1}{\tau}\norm{q_k}_{L^2(\Omega)}^2
		+
		\epsilon\norm{\nabla q_k}_{L^2(\Omega)}^2
		=0.
	\]
	Hence $q_k=0$, and therefore
	\[
		w_k=z_k=u_k+v_k.
	\]
	Consequently, for every fixed $(a,b,c)\in L^2(\Omega)^3$, there exists a unique
	triple
	\[
		(u_k,v_k,w_k)\in (H^1(\Omega))^3
	\]
	solving the linearised system \eqref{eq:LinScheme} in the weak sense. Moreover,
	\[
		w_k=u_k+v_k.
	\]

	\textbf{Step 2. -- Existence of a fixed point}\\
	In order to prove the  existence of solutions to the nonlinear problem, we shall use Sch\"afer's fixed-point theorem. To this end, let us define the mapping
	\begin{align*}
		\mathcal F:\big(L^2(\Omega)\big)^3 & \to \big(L^2(\Omega)\big)^3, \\
		(a,b,c)                            & \mapsto (u_k,v_k,w_k).
	\end{align*}
	Moreover, by the analysis in Step 1, we know that the image of $\mathcal F$ is contained in $(H^1(\Omega))^3$. In particular, by the bounds above, bounded sets in $(L^2(\Omega))^3$ are mapped into bounded sets  in $(H^1(\Omega))^3$, which are precompact in $(L^2(\Omega))^3$. We can use this property to show that $\mathcal F$ is continuous. To this end, let $(a_i, b_i, c_i) \to (a,b,c)$, as $i \to \infty$, strongly  in $(L^2(\Omega))^3$. Furthermore, let $(u_k^i, v_k^i, w_k^i) := \mathcal F(a_i, b_i, c_i)$ be the associated solutions. Since $\mathcal F$ maps bounded sets into precompact ones, there exists a subsequence, not relabeled, such that $(u_k^i, v_k^i, w_k^i) \to (\tilde u, \tilde v, \tilde w)$, strongly in $(L^2(\Omega))^3$ and weakly in $(H^1(\Omega))^3$, for some $\tilde u, \tilde v, \tilde w \in H^1(\Omega)$. We have to show that $(\tilde u, \tilde v, \tilde w) = (u_k, v_k, w_k) = \mathcal F(a,b,c)$.

	Using the strong compactness of the subsequence and the definition of $\phi$, we also have
	$$
		\prt{\phi(a_i), \phi(b_i), \phi(c_i)} \to \prt{\phi(a), \phi(b), \phi(c)}, \quad \mbox{as } i \to \infty,
	$$
	strongly in any $L^p$, with $p<\infty$, and the same is also true for the growth terms. Using the compactness, we can pass to the limit in System \eqref{eq:LinScheme}, i.e.,
	\begin{align*}
		\frac{ w_k^i -w_{k-1}}{\tau} & - \epsilon \Delta w_k^i - \gamma \nabla \cdot ((\phi(a_i)+\phi(b_i))^{\gamma} \nabla w_k^i)  = (R_{u}+R_v )(\phi(a_i),\phi(b_i),\phi(c_i)),     \\
		\frac{ u_k^i -u_{k-1}}{\tau} & - \epsilon \Delta u_k^i = \gamma \nabla \cdot (\phi(a_i)(\phi(a_i)+\phi(b_i))^{\gamma-1} \nabla w_k^i) +  R_{u}(\phi(a_i),\phi(b_i),\phi(c_i)), \\
		\frac{ v_k^i -v_{k-1}}{\tau} & - \epsilon \Delta v_k^i = \gamma \nabla \cdot (\phi(b_i)(\phi(a_i)+\phi(b_i))^{\gamma-1} \nabla w_k^i)  +  R_v (\phi(a_i),\phi(b_i),\phi(c_i)),
	\end{align*}
	understood in the variational sense, and obtain
	\begin{align*}
		\frac{ \tilde w - w_{k-1}}{\tau} & - \epsilon \Delta \tilde w - \gamma \nabla \cdot ((\phi(a)+\phi(b))^{\gamma} \nabla \tilde w)  = (R_{u}+R_v )(\phi(a),\phi(b),\phi(c)),   \\
		\frac{ \tilde u - u_{k-1}}{\tau} & - \epsilon \Delta \tilde u = \gamma \nabla \cdot (\phi(a)(\phi(a)+\phi(b))^{\gamma-1} \nabla \tilde w) +  R_{u}(\phi(a),\phi(b),\phi(c)), \\
		\frac{ \tilde v - v_{k-1}}{\tau} & - \epsilon \Delta \tilde v = \gamma \nabla \cdot (\phi(b)(\phi(a)+\phi(b))^{\gamma-1} \nabla \tilde w)  +  R_v (\phi(a),\phi(b),\phi(c)).
	\end{align*}
	By the uniqueness of the solution to the linearised problem with
	coefficients $(a,b,c)$, we identify
	\[
		(\tilde u,\tilde v,\tilde w)=\mathcal F(a,b,c).
	\]
	Since every convergent subsequence has the same limit, the whole sequence
	converges to $\mathcal F(a,b,c)$ in $[L^2(\Omega)]^3$, whence we infer the continuity of $\mathcal F$.

	Next, in order to apply the Sch\"afer fixed-point theorem, we need to prove that the set
	\[
		\mathcal S
		:=
		\left\{
		(\ou,\ov,\ow)\in [L^2(\Omega)]^3:
		(\ou,\ov,\ow)=\sigma\mathcal F(\ou,\ov,\ow)
		\text{ for some } \sigma\in(0,1)
		\right\}
	\]
	is bounded in $(L^2(\Omega))^3$.

	Let $(\ou,\ov,\ow)\in\mathcal S$, and let
	\[
		\mathcal F(\ou,\ov,\ow)
		=
		\frac1\sigma(\ou,\ov,\ow),
	\]
	for some $\sigma\in(0,1)$. Hence, in the variational formulation of the
	linearised problem, we take
	\[
		(a,b,c)=(\ou,\ov,\ow),
		\qquad
		\text{and} \qquad (u_k,v_k,w_k)=\frac1\sigma(\ou,\ov,\ow).
	\]
	Testing the equation for $\ow$ by $\ow$ yields
	\begin{align*}
		\frac{1}{\tau\sigma} \int_\Omega |\ow|^2 \dx x & + \frac{1}{\sigma} \int_\Omega (\epsilon +\gamma(\phi(\ou)+\phi(\ov))^{\gamma} ) |\nabla \ow|^2 \dx x                               \\
		                                               & = \frac{1}{\tau} \int_\Omega w_{k-1} \ow\dx x +  \int_\Omega \left[\phi(\ou)F(\phi(\ow ))+ \phi(\ov)G(\phi(\ow ))\right] \ow \dx x.
	\end{align*}
	From this equation and Young's inequality, using the boundedness of the
	cut-off reaction term, we obtain
	\begin{align*}
		\frac{1}{2\tau\sigma}\norm{\ow}_{L^2(\Omega)}^2
		 & +
		\frac{\epsilon}{\sigma}\norm{\nabla\ow}_{L^2(\Omega)}^2
		+
		\frac{\gamma}{\sigma}
		\int_\Omega
		(\phi(\ou)+\phi(\ov))^\gamma |\nabla\ow|^2 \dx x  \leq
		C \sigma,
	\end{align*}
	where $C>0$ depends on $\tau$ but is independent of $\sigma$. Then, since $\sigma\in(0,1)$,
	\begin{align*}
		\norm{\ow}_{H^1(\Omega)}\leq C,
		\qquad \text{and} \qquad \frac{1}{\sigma}\norm{\nabla\ow}_{L^2(\Omega)}^2\leq C,
	\end{align*}
	where $C>0$ is independent of $\sigma$.

	Next, testing the equations for $\ou$ and $\ov$ by $\ou$ and $\ov$,
	respectively, and using Young's inequality, we get
	\begin{align*}
		\frac{1}{2\tau\sigma}\norm{\ou}_{L^2(\Omega)}^2
		+
		\frac{\epsilon}{2\sigma}\norm{\nabla\ou}_{L^2(\Omega)}^2
		 & \leq
		\frac{C}{\sigma}\norm{\nabla\ow}_{L^2(\Omega)}^2
		+
		C\sigma, \\
		\frac{1}{2\tau\sigma}\norm{\ov}_{L^2(\Omega)}^2
		+
		\frac{\epsilon}{2\sigma}\norm{\nabla\ov}_{L^2(\Omega)}^2
		 & \leq
		\frac{C}{\sigma}\norm{\nabla\ow}_{L^2(\Omega)}^2
		+
		C\sigma,
	\end{align*}
	where $C>0$ depends on $\tau$ but is independent of $\sigma$. Combining the previous
	estimates yields
	\begin{align*}
		\norm{\ou}_{H^1(\Omega)}
		+
		\norm{\ov}_{H^1(\Omega)}
		+
		\norm{\ow}_{H^1(\Omega)}
		\leq C_S,
	\end{align*}
	with a constant $C_S>0$ independent of $\sigma$. Hence, the Sch\"afer set is
	bounded in $(L^2(\Omega))^3$ and the Sch\"afer's fixed-point theorem yields the existence of
	$(\ou,\ov,\ow)\in (H^1(\Omega))^3$ such that
	\[
		\mathcal F(\ou,\ov,\ow)=(\ou,\ov,\ow).
	\]
	Moreover, by the conclusion of Step 1, we have
	\[
		\ow=\ou+\ov.
	\]
	The fixed point satisfies, in $H^1(\Omega)'$,
	\begin{subequations}
		\label{eq:scheme-weakForm}
		\begin{align}
			\frac{\ow-w_{k-1}}{\tau}
			- & \epsilon\Delta\ow
			-\gamma\nabla\cdot\left(
			(\phi(\ou)+\phi(\ov))^\gamma\nabla\ow
			\right)
			=
			(R_{u}+R_v )(\phi(\ou),\phi(\ov), \phi(\ow)),\label{eq:scheme-weakForm-w} \\
			\frac{\ou-u_{k-1}}{\tau}
			- & \epsilon\Delta\ou
			-\gamma\nabla\cdot\left(
			\phi(\ou)(\phi(\ou)+\phi(\ov))^{\gamma-1}\nabla\ow
			\right)
			=
			R_{u}(\phi(\ou),\phi(\ov), \phi(\ow)),\label{eq:scheme-weakForm-u}        \\
			\frac{\ov-v_{k-1}}{\tau}
			- & \epsilon\Delta\ov
			-\gamma\nabla\cdot\left(
			\phi(\ov)(\phi(\ou)+\phi(\ov))^{\gamma-1}\nabla\ow
			\right)
			=
			R_v (\phi(\ou),\phi(\ov), \phi(\ow)). \label{eq:scheme-weakForm-v}
		\end{align}
	\end{subequations}
	The homogeneous Neumann boundary conditions are encoded in this variational
	formulation.

	\textbf{Step 3. -- Removal of cut-off}\\
	Let us first note that
	$$
		\phi(\ou)(\ou)_- = 0,
		\qquad
		\phi(\ou)\nabla(\ou)_- = 0,
		\qquad \text{and} \qquad
		\phi(\ov)G_u(\phi(\ow))\geq 0,
	$$
	for almost every $x\in \Omega$. Therefore, upon choosing $\varphi = -(\ou)_-$ as test function, we get
	\begin{align*}
		\frac{1}{\tau}\int_\Omega \abs{(\ou)_-}^2 + \epsilon \abs{\nabla(\ou)_-}^2 \dx x
		 & = - \frac{1}{\tau}\int_\Omega u_{k-1}(\ou)_- \dx x - \int_\Omega \phi(\ov)G_u(\phi(\ow))(\ou)_- \dx x \\
		 & \leq 0.
	\end{align*}
	Hence, $\ou\geq 0$ and, in the same way,  $\ov \geq 0$.
	Next, we show that $\ow\in L^\infty(\Omega)$ using a simple  induction argument. Previously, we chose $M$ such that $w_0\leq M$. Let $0 \leq w_{k-1}\leq M$. Then, we use $\varphi = (\ow-M)_+$ as test function in \eqref{eq:scheme-weakForm-w} to get
	\begin{align*}
		\frac{1}{\tau}\|(\ow-M)_+\|^2_{L^2(\Omega)}
		  & + \int_\Omega
		\left(\epsilon + \gamma(\phi(\ou)+\phi(\ov))^{\gamma}\right)
		\abs{\nabla(\ow-M)_+}^2 \dx x                             \\
		= & \frac{1}{\tau} \int_\Omega (w_{k-1}-M)(\ow-M)_+ \dx x \\
		  & + \int_\Omega
		\left[
			\phi(\ou) F(\phi(\ow))
			+
			\phi(\ov) G(\phi(\ow))
			\right]
		(\ow-M)_+ \dx x .
	\end{align*}
	By assumption, we have
	\begin{align*}
		\int_\Omega \prt*{\phi(\ou) F(\phi(\ow))+\phi(\ov) G(\phi(\ow))}  (\ow-M)_+ \dx x \leq 0.
	\end{align*}
	Moreover, the fact that $w_{k-1}\leq M$ in conjunction with the definition of the positive part yields
	\begin{align*}
		\frac{1}{\tau} \int_\Omega (w_{k-1}-M)(\ow-M)_+ \dx x \leq 0.
	\end{align*}
	Since the left-hand side is nonnegative while the right-hand side is
	nonpositive, we obtain $(\ow-M)_+ = 0$, almost everywhere, and therefore
	$\ow\leq M$. Since we already established $\ow=\ou+\ov$ and
	$\ou,\ov\geq0$, it immediately follows that $\ou,\ov\leq M$, and, as a consequence,
	\[
		\phi(\ou)=\ou,\qquad
		\phi(\ov)=\ov,\qquad
		\phi(\ow)=\ow
		\quad\text{a.e. in }\Omega.
	\]
	Thus, the cut-off function can be removed from \eqref{eq:LinScheme} and
	\eqref{eq:scheme-weakForm}.
\end{proof}

\section{A Priori Estimates}
\label{sec:apriori}
This section is dedicated to deriving estimates for the family of approximations $(u_{k,\epsilon}, v_{k,\epsilon}, w_{k,\epsilon})$ of Scheme \eqref{eq:scheme} that are uniform in both $\epsilon>0$ and $\tau>0$.
We set
\begin{align}
	\label{def:MR}
	M^R \coloneqq \max \left\{
	\sup_{0\leq r \leq M} | F_u(r)|,\sup_{0\leq r\leq M} | F_v(r)|,\sup_{0\leq r \leq M} | G_u(r)|,\sup_{0\leq r\leq M} | G_v(r)|
	\right\}.
\end{align}
Let us recall that $\overline f_{\tau}$ and $\widehat f_{\tau}$ are the piecewise constant interpolation and the piecewise linear interpolation of a sequence, respectively, $(f_k)_{k=0}^N$.

\begin{proposition}[Entropy-Dissipation Estimate]
	\label{prop:entropy-dissipation}
	Let $(u_{k,\epsilon}, v_{k,\epsilon}, w_{k,\epsilon})$ be a solution to Scheme \eqref{eq:scheme}. Then, there holds
	\begin{align*}
		4 \epsilon \int_0^T \int_\Omega \left( \abs*{\nabla \sqrt{\overline    u_{\tau, \epsilon}}}^2 + \abs*{\nabla \sqrt{\overline v_{\tau, \epsilon}}}^2 \right)\dx x \dx t
		+ \frac{4\gamma}{(\gamma + 1)^2} \int_0^T \int_\Omega  \abs*{\nabla \overline w_{\tau, \epsilon}^{(\gamma +1)/2}}^2 \dx x \dx t \leq C
	\end{align*}
	for some $C>0$, independent of $\epsilon, \tau >0$.
\end{proposition}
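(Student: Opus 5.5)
Our plan is to test the $u$-equation \eqref{eq:scheme:u} with $\log u_k$ and the $v$-equation \eqref{eq:scheme:v} with $\log v_k$, sum, and use convexity of the Boltzmann entropy $H(u,v)=\int_\Omega (u\log u - u + v\log v - v)\dx x$. Since $u_k$ may vanish, we first test with $\log(u_k+\delta)$, $\log(v_k+\delta)$ for $\delta>0$. These lie in $H^1(\Omega)$ because $u_k\in H^1\cap L^\infty$ and $u_k\ge0$ by Lemma~\ref{lem:buffer-this-didnt-have-a-label}. We let $\delta\to0$ at the end, using monotone convergence on the gradient terms.

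For the time-difference term, convexity of $s\mapsto (s+\delta)\log(s+\delta)-s$ gives
\[
(u_k-u_{k-1})\log(u_k+\delta)\ge h_\delta(u_k)-h_\delta(u_{k-1}).
\]
The $\epsilon$-diffusion term produces $\epsilon\int |\nabla u_k|^2/(u_k+\delta)$, which tends to $4\epsilon\int|\nabla\sqrt{u_k}|^2$. For the cross-diffusion term, the $u$-part gives
\[
\int \tfrac{u_k}{u_k+\delta}\nabla w_k^\gamma\cdot\nabla u_k \dx x,
\]
and adding the $v$-part gives, as $\delta\to0$, $\int \nabla w_k^\gamma\cdot\nabla w_k=\gamma\int w_k^{\gamma-1}|\nabla w_k|^2 = \frac{4\gamma}{(\gamma+1)^2}\int|\nabla w_k^{(\gamma+1)/2}|^2$. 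This is the point where the symmetry of the system is used. The factor $u_k/(u_k+\delta)\to \mathbbm 1_{\{u_k>0\}}$ causes no trouble, since $\nabla u_k=0$ a.e. on $\{u_k=0\}$. Dominated convergence applies because $\nabla w_k^\gamma=\gamma w_k^{\gamma-1}\nabla w_k\in L^2$ (as $w_k\le M$, $\gamma\ge1$) and $\nabla u_k\in L^2$.

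For the reaction terms we need $\int R_u(u_k,v_k,w_k)\log(u_k+\delta)$ bounded above uniformly. On $\{u_k+\delta\le1\}$ we have $\log\le0$ and $R_u\ge0$ by \eqref{eq:RuRv_assumptions}, so the integrand is $\le0$. On $\{u_k+\delta>1\}$ we have $0\le\log(u_k+\delta)\le\log(M+1)$ and $|R_u|\le 2MM^R$. Hence the reaction contribution is at most $2\cdot 2MM^R\log(M+1)|\Omega|$. Exactly this is why \eqref{eq:RuRv_assumptions} is imposed; without the sign condition $u\log u$ times $vG_u$ would still be bounded, so the step is in fact safe either way.

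Multiplying by $\tau$ and summing over $k=1,\dots,N$ telescopes the entropy. This yields
\[
H(u_N,v_N)+\tau\sum_k[\text{dissipation}]\le H(u_0,v_0)+CT.
\]
Both $H(u_N,v_N)$ and $H(u_0,v_0)$ are bounded in terms of $M$ and $|\Omega|$, since $s\log s - s\in[-1,\,M\log M]$ on $[0,M]$. Rewriting $\tau\sum_k$ as $\int_0^T$ of the piecewise constant interpolants gives the claim with $C$ independent of $\epsilon,\tau$.

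The main obstacle is the rigorous justification of the $\delta\to0$ limit in the cross term and the admissibility of the log test functions; the remaining steps are routine.
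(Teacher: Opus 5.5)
Your proof is correct and follows essentially the same route as the paper: test with $\log(u_k+\delta)$ and $\log(v_k+\delta)$, use the sign of $R_u,R_v$ on $\{u_k+\delta\le 1\}$, add the cross terms to obtain $\int_\Omega\nabla w_k\cdot\nabla w_k^\gamma\,\mathrm{d}x$, let $\delta\to0$ and telescope. Your choice $h_\delta(s)=(s+\delta)\log(s+\delta)-s$ and your use of monotone convergence for the $\epsilon$-term are slightly cleaner than the paper's, whose convexity step silently drops a harmless $\int_\Omega(u_k-u_{k-1})\,\mathrm{d}x$ term. Your aside that the step is safe ``either way'' is wrong, however: with no sign on $G_u$, the term $v_kG_u(w_k)\log(u_k+\delta)$ is not bounded above as $\delta\to0$, since it behaves like $|\log\delta|$ where $u_k=0<v_k$ and $G_u(w_k)<0$, so the estimate does rely on $G_u\ge0$ (part of (H3), and also implied by \eqref{eq:RuRv_assumptions}).
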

\begin{proof}
	In the proof, we omit the subscript $\epsilon$ from $(u_{k,\epsilon}, v_{k,\epsilon}, w_{k,\epsilon})$ and simply write $(u_k, v_k, w_k)$ for ease of notation.
	We test the scheme, Eq. \eqref{eq:scheme}, by $\ln (u_k + \delta)$, $\ln (v_k + \delta)$, and $\ln(w_k + \delta)$ with $\delta >0$. Beginning with the equation for $u$, we obtain
	\begin{subequations}
		\label{eq:scheme-tested-log}
		\begin{align}
			\frac1\tau \int_\Omega (u_k - u_{k-1}) \ln(u_k + \delta)\dx x  = & - \epsilon \int_\Omega \frac{\abs*{\nabla u_{k}}^2}{u_k + \delta} \dx x - \int_\Omega \frac{u_k}{u_k + \delta} \nabla u_k \cdot \nabla w_k^\gamma \dx x \\
			\nonumber
			                                                                 & +\int_\Omega R_u(u_k,v_k,w_k) \ln(u_k+\delta) \dx x,
		\end{align}
		and, similarly,
		\begin{align}\label{eq:scheme-tested-log:v}
			\frac1\tau \int_\Omega(v_k - v_{k-1}) \ln(v_k + \delta)\dx x  = & - \epsilon \int_\Omega \frac{\abs*{\nabla v_{k}}^2}{v_k + \delta} \dx x - \int_\Omega \frac{v_k}{v_k + \delta} \nabla v_k \cdot \nabla w_k^\gamma \dx x \\
			\nonumber
			                                                                & +\int_\Omega R_v(u_k,v_k,w_k) \ln(v_k+\delta) \dx x ,
		\end{align}
		for the $v$-equation.
	\end{subequations}
	Then, for $\delta <1$, we compute
	\begin{align*}
		\int_\Omega R_u(u_k,v_k,w_k) \ln(u_k+\delta) \dx x & = \int_\Omega (u_kF_u(w_k)+v_kG_u(w_k))\ln(u_k+\delta) \dx x                 \\
		                                                   & \leq \int_{u_k+\delta \geq 1} (u_kF_u(w_k)+v_kG_u(w_k))\ln(u_k+\delta) \dx x \\
		                                                   & \leq M^R \int_\Omega(u_k+v_k)(u_k+\delta) \dx x\leq C_R,
	\end{align*}
	for some $C_R>0$, independent of $\epsilon, \tau>0$. Here, we used Assumption~\eqref{eq:RuRv_assumptions}. In the same way we can estimate the term in \eqref{eq:scheme-tested-log:v}.  For convenience, we introduce the following notation for the entropy
	\begin{align*}
		\mathcal H[f] := \int_\Omega f \ln f \dx x,
	\end{align*}
	and, using its convexity, Eqs. \eqref{eq:scheme-tested-log}
	\begin{align*}
		\frac{1}{\tau} & \prt*{ \mathcal H[u_k + \delta]  + \mathcal H[v_k + \delta] - \mathcal H[u_{k-1} + \delta] - \mathcal H[v_{k-1} + \delta] } \\
		               & \leq
		- \epsilon \int_\Omega \left( \frac{\abs*{\nabla u_{k}}^2}{u_k + \delta} + \frac{\abs*{\nabla v_{k}}^2}{v_k + \delta}\right) \dx x
		- \int_\Omega \frac{u_k}{u_k + \delta} \nabla u_k \cdot \nabla w_k^\gamma \dx x                                                              \\
		               & \quad - \int_\Omega \frac{v_k}{v_k + \delta} \nabla v_k \cdot \nabla w_k^\gamma \dx x+2C_R.
	\end{align*}
	Using the dominated convergence theorem, we obtain as $\delta \to 0$, that
	\begin{align*}
		\frac{1}{\tau} & \prt{ \mathcal H[u_k]  + \mathcal H[v_k] - \mathcal H[u_{k-1}] - \mathcal H[v_{k-1}]} \\
		               & \leq
		- 4 \epsilon \int_\Omega \left( \abs*{\nabla \sqrt{u_{k}}}^2 + \abs*{\nabla \sqrt{v_{k}}}^2\right)  \dx x
		- \int_\Omega \nabla u_k \cdot \nabla w_k^\gamma \dx x - \int_\Omega  \nabla v_k \cdot \nabla w_k^\gamma \dx x
		+2C_R                                                                                                  \\
		               & =
		- 4 \epsilon \int_\Omega \left( \abs*{\nabla \sqrt{u_{k}}}^2 + \abs*{\nabla \sqrt{v_{k}}}^2\right)  \dx x
		- \int_\Omega \nabla w_k \cdot \nabla w_k^\gamma \dx x+2C_R                                            \\
		               & =
		- 4 \epsilon \int_\Omega \left( \abs*{\nabla \sqrt{u_{k}}}^2 + \abs*{\nabla \sqrt{v_{k}}}^2\right)  \dx x
		- \frac{4\gamma}{(\gamma + 1)^2}\int_\Omega  \abs*{\nabla w_k^{(\gamma +1)/2}}^2 \dx x+2C_R.
	\end{align*}
	In summary, we have
	\begin{align*}
		4 \epsilon \tau \sum_{k=1}^N \int_\Omega \left( \abs*{\nabla \sqrt{u_{k}}}^2 + \abs*{\nabla \sqrt{v_{k}}}^2\right)  \dx x
		+ \frac{4\gamma}{(\gamma + 1)^2} \tau \sum_{k=1}^N \int_\Omega  \abs*{\nabla w_k^{(\gamma +1)/2}}^2 \dx x \leq C,
	\end{align*}
	where
	\begin{align*}
		C := \mathcal H[u_{0}] + \mathcal H[v_{0}] - \mathcal H[u_N]  - \mathcal H[v_N]+2C_RT,
	\end{align*}
	and the statement follows from the definition of the piecewise constant interpolation, Definition \ref{def:pw-interpolations}.
\end{proof}

\begin{lemma}[Space Regularity of Nonlinearity]
	\label{lem:space-regularity-nonlinearity}
	There holds
	\begin{align}
		\norm*{ \overline w_{\tau, \epsilon}^{p}}_{L^2(0,T;H^{1}(\Omega))} \leq C,
	\end{align}
	for arbitrary $p\ge (\gamma +1)/2$, where $C>0$ depends on $p$ but is independent of $\epsilon, \tau >0$.
\end{lemma}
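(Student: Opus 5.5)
The bound combines the gradient estimate of Proposition~\ref{prop:entropy-dissipation} with the pointwise bound $0\le \overline w_{\tau,\epsilon}\le M$ from Lemma~\ref{lem:buffer-this-didnt-have-a-label}. The $L^2(\Omega_T)$ part of the $H^1$ norm is immediate: since $0\le w_k\le M$ for every $k$, we have $\overline w_{\tau,\epsilon}^{p}\le M^p$ pointwise, so $\|\overline w_{\tau,\epsilon}^p\|_{L^2(\Omega_T)}^2\le M^{2p}|\Omega|T$. This is uniform in $\epsilon,\tau$.

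For the gradient we write $\overline w^{\,p}=\big(\overline w^{(\gamma+1)/2}\big)^{2p/(\gamma+1)}$ and set $s:=2p/(\gamma+1)\ge 1$. The map $g(r)=r^{s}$ is $C^1$ on $[0,\infty)$ because $s\ge1$, and it is Lipschitz on $[0,M^{(\gamma+1)/2}]$ with constant $sM^{(\gamma+1)(s-1)/2}$. The chain rule for Lipschitz functions of Sobolev functions then gives
\begin{align*}
\nabla \overline w_{\tau,\epsilon}^{\,p} = s\,\big(\overline w_{\tau,\epsilon}^{(\gamma+1)/2}\big)^{s-1}\,\nabla \overline w_{\tau,\epsilon}^{(\gamma+1)/2},
\qquad
\abs{\nabla \overline w_{\tau,\epsilon}^{\,p}}\le \frac{2p}{\gamma+1}M^{p-(\gamma+1)/2}\abs{\nabla \overline w_{\tau,\epsilon}^{(\gamma+1)/2}}
\end{align*}
a.e. in $\Omega_T$. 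Squaring and integrating, Proposition~\ref{prop:entropy-dissipation} (where we drop the nonnegative $\epsilon$-terms) bounds $\int_0^T\!\int_\Omega\abs{\nabla \overline w^{(\gamma+1)/2}}^2$ by $C(\gamma+1)^2/(4\gamma)$. Hence the gradient bound holds with a constant depending on $p$, $M$ and $\gamma$ only.

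The only step needing care is the use of the chain rule, which requires that $w_k^{(\gamma+1)/2}\in H^1(\Omega)$ for each $k$. This holds because $w_k\in H^1(\Omega)\cap L^\infty(\Omega)$ with $w_k\ge 0$ and $(\gamma+1)/2\ge1$, so $r\mapsto r^{(\gamma+1)/2}$ is Lipschitz on $[0,M]$. Since $\overline w_{\tau,\epsilon}$ is piecewise constant in time, the argument can be carried out slice by slice for each $w_k$ and then summed with weight $\tau$.
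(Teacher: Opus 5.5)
Your proof is correct and is essentially the paper's argument. The bound $0\le \overline w_{\tau,\epsilon}\le M$ gives the pointwise estimate $\abs{\nabla \overline w_{\tau,\epsilon}^{p}}^2\le \frac{4p^2}{(\gamma+1)^2}M^{2p-\gamma-1}\abs{\nabla \overline w_{\tau,\epsilon}^{(\gamma+1)/2}}^2$, with the same constant as the paper, and integrating it and applying Proposition~\ref{prop:entropy-dissipation} gives the result. Writing $\overline w^{\,p}$ as a composition with $\overline w^{(\gamma+1)/2}$, instead of differentiating $w$ directly, is only a cosmetic difference; your explicit handling of the $L^2(\Omega_T)$ part of the norm and of the chain rule fills in details the paper leaves implicit.
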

\begin{proof}
	We consider
	\begin{align*}
		\abs*{\nabla \overline w_{\tau,\epsilon}^{p}}^2
		 & = p^2 \abs{\overline w_{\tau,\epsilon}^{p-1} \nabla \overline w_{\tau,\epsilon}}^2                                                            \\
		 & \leq p^2 \abs{\overline w_{\tau,\epsilon}^{p-1-(\gamma-1)/2} \overline w_{\tau,\epsilon}^{(\gamma-1)/2} \nabla \overline w_{\tau,\epsilon}}^2 \\
		 & \leq \frac{4p^2}{(\gamma +1)^2} M^{2p-\gamma-1} \abs{ \nabla \overline w_{\tau,\epsilon}^{(\gamma+1)/2}}^2,
	\end{align*}
	having used the uniform $L^\infty$-bounds from Lemma \ref{lem:buffer-this-didnt-have-a-label} and $2p-\gamma -1\ge 0$. Integrating in space and time yields
	\begin{align*}
		\int_0^T\int_\Omega \abs*{\nabla \overline w_{\tau,\epsilon}^{p}}^2 \dx x \dx t \leq \frac{4p^2}{(\gamma +1)^2} M^{2p-\gamma-1} \int_0^T\int_\Omega \abs{  \nabla \overline w_{\tau,\epsilon}^{(\gamma +1)/2}}^2 \dx x \dx t.
	\end{align*}
	This gives the desired bound as the right-hand side is bounded by Proposition \ref{prop:entropy-dissipation}.
\end{proof}

\begin{lemma}
	\label{lem:regularity-gradu-gradv}
	There exists a constant, $C>0$, independent of $\epsilon, \tau>0$ such that
	\begin{align*}
		\epsilon\norm{\nabla \overline u_{\tau,\epsilon}}_{L^2(\Omega_T)} + \epsilon\norm{\nabla \overline v_{\tau,\epsilon}}_{L^2(\Omega_T)}
		+ \sqrt{\epsilon} \norm{\nabla \overline w_{\tau,\epsilon}}_{L^2(\Omega_T)}\leq C.
	\end{align*}
\end{lemma}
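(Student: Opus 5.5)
We need to prove $\epsilon\|\nabla u\|_{L^2}+\epsilon\|\nabla v\|_{L^2}+\sqrt\epsilon\|\nabla w\|_{L^2}\le C$.

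\emph{The $w$-bound.} Test \eqref{eq:scheme:w} by $w_k$. Convexity gives
\[
\tfrac1\tau\int_\Omega (w_k-w_{k-1})w_k\,\dx x \ge \tfrac1{2\tau}\prt*{\norm{w_k}_{L^2(\Omega)}^2-\norm{w_{k-1}}_{L^2(\Omega)}^2}.
\]
The diffusion terms give $\epsilon\norm{\nabla w_k}_{L^2(\Omega)}^2+\gamma\int_\Omega w_k^\gamma\abs{\nabla w_k}^2\,\dx x$, and the second of these is nonnegative. The reaction term is bounded by $2M^R M^2\abs{\Omega}$, since $0\le u_k,v_k,w_k\le M$ by Lemma~\ref{lem:buffer-this-didnt-have-a-label}. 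Multiplying by $\tau$ and summing over $k$ telescopes to
\[
\epsilon\int_0^T\norm{\nabla \overline w_{\tau,\epsilon}}_{L^2(\Omega)}^2\,\dx t\le \tfrac12\norm{w_0}_{L^2(\Omega)}^2+CT,
\]
which is the $\sqrt\epsilon$ bound.

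\emph{The $u$- and $v$-bounds.} Test \eqref{eq:scheme:u} by $u_k$. The time-difference and reaction terms are handled as above, which leaves the cross term
\[
\int_\Omega u_k\nabla w_k^\gamma\cdot\nabla u_k\,\dx x .
\]
I would bound this by Young's inequality:
\[
\le \tfrac{\epsilon}{2}\norm{\nabla u_k}_{L^2(\Omega)}^2+\tfrac{1}{2\epsilon}\norm{u_k\nabla w_k^\gamma}_{L^2(\Omega)}^2 .
\]
Using $u_k\le M$ and $\abs{\nabla w_k^\gamma}=\tfrac{2\gamma}{\gamma+1}w_k^{(\gamma-1)/2}\abs{\nabla w_k^{(\gamma+1)/2}}$ with $\gamma\ge1$, the second term is at most $\tfrac{C}{\epsilon}\norm{\nabla w_k^{(\gamma+1)/2}}_{L^2(\Omega)}^2$. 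Summing in $k$ and invoking Proposition~\ref{prop:entropy-dissipation} (or Lemma~\ref{lem:space-regularity-nonlinearity} with $p=\gamma$) gives
\[
\tfrac\epsilon2\int_0^T\norm{\nabla\overline u_{\tau,\epsilon}}_{L^2(\Omega)}^2\,\dx t\le C+\tfrac C\epsilon .
\]
Multiplying by $\epsilon$ yields $\epsilon^2\norm{\nabla \overline u_{\tau,\epsilon}}_{L^2(\Omega_T)}^2\le C(\epsilon+1)$. This is uniform for $\epsilon\le1$, which is the regime of interest as $\epsilon\to0$. The same argument applies to $v$.

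\emph{Main obstacle.} The cross term carries no sign, so it must be absorbed against the $\epsilon$-diffusion, and that absorption costs a factor $1/\epsilon$. This is exactly why the statement has the weaker weight $\epsilon$ instead of $\sqrt\epsilon$. The step relies on having $\nabla w_k^\gamma$ controlled in $L^2$ uniformly in $\epsilon$, which is where the entropy estimate enters.

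If uniformity for large $\epsilon$ is also wanted, one can argue differently there. Either restrict to $\epsilon\le1$ without loss of generality, or use the entropy bound $\epsilon\norm{\nabla\sqrt{u}}_{L^2}^2\le C$ from Proposition~\ref{prop:entropy-dissipation} together with $\abs{\nabla u}=2\sqrt u\,\abs{\nabla\sqrt u}\le 2\sqrt M\abs{\nabla\sqrt u}$. The latter gives $\epsilon\norm{\nabla\overline u_{\tau,\epsilon}}_{L^2(\Omega_T)}^2\le C$ directly, hence the stated bound in every regime. This actually gives $\sqrt\epsilon$ control for $u$ and $v$ as well, and it is the route I would use for the write-up.
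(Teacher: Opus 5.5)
Your main argument is the paper's own. Testing \eqref{eq:scheme:u} by $u_k$ and multiplying by $\epsilon$ is the paper's test by $\epsilon u_k$. In both, the cross term is absorbed by Young's inequality against $\norm{\nabla w_k^\gamma}_{L^2(\Omega)}^2$: the paper uses Lemma~\ref{lem:space-regularity-nonlinearity} with $p=\gamma$, you go directly to Proposition~\ref{prop:entropy-dissipation}. Both also test \eqref{eq:scheme:w} by $w_k$. Both arguments end at $\epsilon^2\norm{\nabla\overline u_{\tau,\epsilon}}^2_{L^2(\Omega_T)}\le C(1+\epsilon)$, so both are uniform only for bounded $\epsilon$. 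The paper carries the same implicit restriction through its term $\epsilon(C+\norm{u_0}^2_{L^2(\Omega)})$, and you were right to say so explicitly.

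The entropy route you propose for the write-up is genuinely different and better. The logarithmic test functions turn the two cross terms into $\nabla w_k\cdot\nabla w_k^\gamma\ge0$, so Proposition~\ref{prop:entropy-dissipation} already controls $4\epsilon\int_0^T\int_\Omega\abs{\nabla\sqrt{\overline u_{\tau,\epsilon}}}^2\dx x\dx t$. The pointwise bound $\abs{\nabla u}^2=4u\abs{\nabla\sqrt u}^2\le4M\abs{\nabla\sqrt u}^2$ then gives $\sqrt\epsilon\norm{\nabla\overline u_{\tau,\epsilon}}_{L^2(\Omega_T)}\le C$, with no absorption step and no appeal to Lemma~\ref{lem:space-regularity-nonlinearity}. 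This weight is strictly stronger, holds for all three components, and is uniform in every $\epsilon>0$. It shows that the weight $\epsilon$ in the statement comes from the $L^2$ test, not from the cross term itself.

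One claim is wrong, though: the entropy route does not give ``the stated bound in every regime''. From $\sqrt\epsilon\norm{\nabla\overline u_{\tau,\epsilon}}_{L^2(\Omega_T)}\le C$ you only get $\epsilon\norm{\nabla\overline u_{\tau,\epsilon}}_{L^2(\Omega_T)}\le C\sqrt\epsilon$, which is bounded only for bounded $\epsilon$.

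This cannot be repaired, because uniformity for large $\epsilon$ is not to be expected. Already for $\partial_t u=\epsilon\Delta u$ with Neumann data, $\epsilon\int_0^T\norm{\nabla u}^2_{L^2(\Omega)}\dx t=\frac12\bigl(\norm{u_0}^2_{L^2(\Omega)}-\norm{u(T)}^2_{L^2(\Omega)}\bigr)$. As $\epsilon\to\infty$ this tends to $\frac12\norm{u_0-m_0}^2_{L^2(\Omega)}>0$ for nonconstant $u_0$ with mean $m_0$. Hence $\epsilon^2\int_0^T\norm{\nabla u}^2_{L^2(\Omega)}\dx t\to\infty$.

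The lemma must therefore be read for $\epsilon\in(0,1]$, which is all the limit $\epsilon\to0$ needs. On that range either of your arguments proves it.
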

\begin{proof}
	We test Eq. \eqref{eq:scheme:u}  by $\epsilon u_k$, using Young's inequality, and the uniform bounds $M, M^R$ for the density and the growth rates, respectively, and obtain
	\begin{align}
		\frac\epsilon{2\tau} \prt*{\norm{u_k}_{L^2(\Omega)}^2 - \norm{u_{k-1}}_{L^2(\Omega)}^2}+ \frac{\epsilon^2}{2} \norm{\nabla u_k}_{L^2(\Omega)}^2 \leq  C \norm{\nabla w_k^\gamma}_{L^2(\Omega)}^2 + C.
	\end{align}
	Multiplying by $\tau$ and summing over $k$, we get
	\begin{align}
		\epsilon^2 \norm{\nabla \overline u_{\tau,\epsilon}}_{L^2(\Omega_T)}^2  \leq  C\norm{\nabla \overline  w_{\tau,\epsilon}^\gamma}_{L^2(\Omega_T)}^2 + \epsilon (C+ \norm{u_0}_{L^2(\Omega)}^2).
	\end{align}
	Applying the same procedure for the $v_k$-equation yields the corresponding bound, $\epsilon\norm{\nabla \overline v_{\tau,\epsilon}}_{L^2(0,T;L^2(\Omega))}$.

	Test  Eq. \eqref{eq:scheme:w}  by $w_k$ and use the same strategy to obtain
	\begin{align}
		\epsilon \norm{\nabla \overline w_{\tau,\epsilon}}_{L^2(\Omega_T)}^2  + \frac{4\gamma}{(\gamma +2)^2}\norm{\nabla \overline  w_{\tau,\epsilon}^{\gamma/2+1}}_{L^2(\Omega_T)}^2
		\leq
		C+ \frac 12\norm{w_0}_{L^2(\Omega)}^2.
	\end{align}
	Thus, we complete the proof.
\end{proof}

\begin{proposition}[$H^1(0,T;L^2(\Omega))$ and $L^\infty(0,T;H^1(\Omega))$ Estimates]
	\label{prop:timeDeriv}
	Let $(u_k, v_k, w_k)$ be the solution to the scheme. Then, there holds
	\begin{align*}
		\norm{\partial_t \widehat{w_{\tau, \epsilon}^{(\gamma + 2)/2}}}_{L^2(\Omega_T)}
		+
		\norm{\nabla {\overline w_{\tau, \epsilon}^{\gamma + 1}}}_{L^\infty(0,T;L^2(\Omega))}
		\leq C.
	\end{align*}
	and
	\begin{align*}
		\sqrt{\epsilon} \norm{\partial_t \widehat{w_{\tau, \epsilon}}}_{L^2(\Omega_T)}
		+\epsilon\norm{\nabla {\overline w_{\tau, \epsilon}}}_{L^\infty(0,T;L^2(\Omega))}
		 & + \sqrt{\epsilon}
		\norm{\nabla {\overline w_{\tau, \epsilon}^{(\gamma + 2)/2}}}_{L^\infty(0,T;L^2(\Omega))}
		\leq C,
	\end{align*}
	for some constant $C>0$ independent of $\tau, \epsilon >0$.
\end{proposition}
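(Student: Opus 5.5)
The plan is to test the $w$-equation \eqref{eq:scheme:w} with the discrete time derivative of $w_k^{\gamma+1}$, i.e. with $\varphi = (w_k^{\gamma+1}-w_{k-1}^{\gamma+1})/\tau$. This is the discrete analogue of multiplying $\partial_t w - \epsilon\Delta w - \tfrac{\gamma}{\gamma+1}\Delta w^{\gamma+1} = R$ by $\partial_t w^{\gamma+1}$. The test function is admissible in $H^1(\Omega)$ because $0\le w_k\le M$ (Lemma \ref{lem:buffer-this-didnt-have-a-label}) and $w_k\in H^1(\Omega)$. We obtain four terms, which we treat one at a time.

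\emph{Time term.} We use the elementary inequality
\[
(a-b)(a^{\gamma+1}-b^{\gamma+1}) \ge c_\gamma\,\bigl(a^{(\gamma+2)/2}-b^{(\gamma+2)/2}\bigr)^2, \qquad a,b\ge 0,
\]
which follows from the mean value theorem or the standard comparison $(a-b)(a^m-b^m)\ge \tfrac{4m}{(m+1)^2}(a^{(m+1)/2}-b^{(m+1)/2})^2$. It gives $\frac{1}{\tau^2}\int (w_k-w_{k-1})(w_k^{\gamma+1}-w_{k-1}^{\gamma+1})\ge \frac{c_\gamma}{\tau^2}\norm{w_k^{(\gamma+2)/2}-w_{k-1}^{(\gamma+2)/2}}^2_{L^2(\Omega)}$. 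After multiplying by $\tau$ and summing, this is exactly $c_\gamma\norm{\partial_t \widehat{w^{(\gamma+2)/2}}}^2_{L^2(\Omega_t)}$.

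\emph{Degenerate diffusion term.} It equals $\frac{\gamma}{(\gamma+1)\tau}\int\nabla w_k^{\gamma+1}\cdot\nabla(w_k^{\gamma+1}-w_{k-1}^{\gamma+1})$. By $a\cdot(a-b)\ge\frac12(|a|^2-|b|^2)$ this telescopes to $\frac{\gamma}{2(\gamma+1)\tau}\bigl(\norm{\nabla w_k^{\gamma+1}}^2-\norm{\nabla w_{k-1}^{\gamma+1}}^2\bigr)$.

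\emph{$\epsilon$-term.} It equals $\frac{\epsilon}{\tau}\int\nabla w_k\cdot\nabla(w_k^{\gamma+1}-w_{k-1}^{\gamma+1})$, and this is the step I expect to be the main obstacle. Write $\nabla w_k^{\gamma+1}=(\gamma+1)w_k^\gamma\nabla w_k$ and introduce $\Phi(\nabla w)=\frac{\gamma+1}{2}\int w^\gamma|\nabla w|^2=c\,\norm{\nabla w^{(\gamma+2)/2}}^2$. We want a convexity-type inequality $\int\nabla w_k\cdot\nabla(w_k^{\gamma+1}-w_{k-1}^{\gamma+1})\ge \Phi_k-\Phi_{k-1}$ up to controllable errors. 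The mixed term $w_{k-1}^\gamma\nabla w_{k-1}\cdot\nabla w_k$ is bounded using Young's inequality with weights $w_{k-1}^\gamma$ and $w_k^\gamma$. The mismatch $(w_k^\gamma-w_{k-1}^\gamma)|\nabla w_k|^2$ has to be absorbed, and I would try to do this by using the $L^\infty$ bound and the already established bound on $\sqrt\epsilon\norm{\nabla\overline w}_{L^2}$ (Lemma \ref{lem:regularity-gradu-gradv}). If that fails, the fallback is to test instead with a combination like $\partial_t(w^{\gamma+1}+\epsilon' w)$ and track $\epsilon\norm{\nabla w}^2$ separately. Indeed, testing \eqref{eq:scheme:w} with $(w_k-w_{k-1})/\tau$ gives the second family of estimates: $\norm{\partial_t\widehat w}^2$, telescoping $\frac{\epsilon}{2}\norm{\nabla w_k}^2$, and a cross term $\frac{\gamma}{\gamma+1}\int\nabla w_k^{\gamma+1}\cdot\nabla(w_k-w_{k-1})/\tau$ symmetric to the one above. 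Adding the two tests with suitable weights should let the two cross terms be handled jointly, yielding both $\epsilon\norm{\nabla\overline w}^2_{L^\infty L^2}$ and, from the cross terms, $\sqrt\epsilon\norm{\nabla \overline w^{(\gamma+2)/2}}_{L^\infty L^2}$.

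\emph{Reaction term and closing the estimate.} The reaction term is bounded by $C\tau^{-1}\int|w_k^{\gamma+1}-w_{k-1}^{\gamma+1}|$. Since $|w_k^{\gamma+1}-w_{k-1}^{\gamma+1}|\le C|w_k^{(\gamma+2)/2}-w_{k-1}^{(\gamma+2)/2}|$ for bounded $w$ (using $(\gamma+1)/((\gamma+2)/2)\ge1$), Young's inequality absorbs it into the time term with a constant remainder. Summing over $k\le n$ and taking the maximum over $n$ then gives the claims. The initial terms $\norm{\nabla w_0^{\gamma+1}}$ and $\epsilon\norm{\nabla w_0}^2$ are finite by (H2), since $w_0\in H^1\cap L^\infty$. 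The $\sqrt\epsilon\,\partial_t\widehat w$ bound comes from the second test, whose reaction term is handled the same way.
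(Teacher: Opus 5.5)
There is a genuine gap, and it sits exactly at the step you flag. Testing \eqref{eq:scheme:w} only with $(w_k^{\gamma+1}-w_{k-1}^{\gamma+1})/\tau$ cannot close, because the $\epsilon$-term has no sign. Consider the mismatch $\epsilon\int_\Omega (w_k^\gamma-w_{k-1}^\gamma)|\nabla w_k|^2 \dx x$ you want to absorb. After multiplying by $\tau$ and summing, the $L^\infty$ bound only controls it by $C\epsilon\sum_k\|\nabla w_k\|_{L^2(\Omega)}^2 = C\tau^{-1}\epsilon\|\nabla \overline w_{\tau,\epsilon}\|_{L^2(\Omega_T)}^2$. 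Lemma~\ref{lem:regularity-gradu-gradv} bounds this only by $C/\tau$, since the sum carries no factor $\tau$. Using smallness of $w_k^\gamma-w_{k-1}^\gamma$ instead would need an $L^\infty$ bound on the discrete time derivative, which you do not have. This is the discrete form of the uncontrolled term $\epsilon\,\partial_t(w^\gamma)|\nabla w|^2$, and without removing it you do not even get the first family of estimates.

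Your fallback points the right way, but its entire content is the weight, which you leave open. If you add the $(w_k-w_{k-1})/\tau$ test with weight $\lambda$, the two cross terms are, up to the factor $1/\tau$, $\epsilon\,\nabla w_k\cdot\nabla(w_k^{\gamma+1}-w_{k-1}^{\gamma+1})$ and $\lambda\frac{\gamma}{\gamma+1}\nabla w_k^{\gamma+1}\cdot\nabla(w_k-w_{k-1})$. Unless $\lambda=\frac{\gamma+1}{\gamma}\epsilon$ exactly, a multiple of one of them survives and brings back the same mismatch. In particular the weight is $O(\epsilon)$. So what you obtain is $\epsilon^2\|\nabla w_m\|^2_{L^2(\Omega)}\le C$, as in the statement, not the bound $\epsilon\|\nabla\overline w\|^2_{L^\infty L^2}\le C$ you anticipate.

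The missing idea is that $\epsilon\Delta w+\frac{\gamma}{\gamma+1}\Delta w^{\gamma+1}=\frac{\gamma}{\gamma+1}\Delta\Psi(w)$ with $\Psi(w):=\frac{\gamma+1}{\gamma}\epsilon w+w^{\gamma+1}$, so you should test with $\Psi(w_k)-\Psi(w_{k-1})$. This is the paper's proof, and it is your fallback with the correct weight. The whole elliptic part is then $\frac{\gamma}{\gamma+1}\int_\Omega\nabla\Psi(w_k)\cdot\nabla(\Psi(w_k)-\Psi(w_{k-1}))\dx x$. It telescopes by the same inequality $a\cdot(a-b)\ge\frac12(|a|^2-|b|^2)$ you used for the degenerate term, now applied to $a=\nabla\Psi(w_k)$. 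Moreover,
\[
\|\nabla\Psi(w)\|_{L^2(\Omega)}^2=\frac{(\gamma+1)^2}{\gamma^2}\epsilon^2\|\nabla w\|_{L^2(\Omega)}^2+\frac{2(\gamma+1)^2}{\gamma}\epsilon\int_\Omega w^\gamma|\nabla w|^2\dx x+\|\nabla w^{\gamma+1}\|_{L^2(\Omega)}^2
\]
consists of three nonnegative pieces, which give the three $L^\infty(0,T;L^2(\Omega))$ bounds at once.

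If you prefer to work componentwise and sum by parts, you must keep the remainders $\frac12|a-b|^2$ you dropped. The cross difference $\epsilon\nabla(w_k-w_{k-1})\cdot\nabla(w_k^{\gamma+1}-w_{k-1}^{\gamma+1})$ is absorbed by them with no room to spare, and only at this weight.

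The time term becomes $\frac{\gamma+1}{\gamma}\epsilon|w_k-w_{k-1}|^2$ plus your term, which yields both time-derivative bounds. Your treatment of the reaction term by the Lipschitz bound and Young's inequality goes through. It even avoids the sign condition $R_u+R_v\ge0$, which the paper needs because it multiplies the one-sided convexity inequality \eqref{ineq:wtogammap1-wtogammap2o2} by the reaction.
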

\begin{proof}
	We define
	\begin{align*}
		\Psi(w_k) := \epsilon\dfrac{\gamma+1}{\gamma}  w_k +w_k^{\gamma + 1},
	\end{align*}
	and test Eq.~\eqref{eq:scheme:w} by  $ \Psi(w_k) -  \Psi(w_{k-1}) $ to derive
	\begin{align}
		\label{eq:scheme-tested-psi}
		\mathcal J_k^1 + \mathcal J_k^2  =\mathrm{React}_k,
	\end{align}
	where we set
	\begin{align*}
		\mathcal J_k^1 := \int_\Omega \frac{w_k-w_{k-1}}{\tau} ( \Psi(w_k) -  \Psi(w_{k-1}) ) \dx x,
	\end{align*}
	and
	\begin{align*}
		\mathcal J_k^2 := \frac{\gamma}{\gamma+1} \int_\Omega (\nabla \Psi(w_k) - \nabla  \Psi(w_{k-1}) )\cdot \nabla  \Psi(w_k) \dx x ,
	\end{align*}
	as well as
	\begin{align*}
		\mathrm{React}_k :=\int_\Omega (u_k F(w_k)+v_kG(w_k)) ( \Psi(w_k) -  \Psi(w_{k-1}) ) \dx x.
	\end{align*}
	Let us investigate the first term of the left-hand side by observing
	\begin{align*}
		\mathcal J_k^1 = \int_\Omega & \frac{w_k-w_{k-1}}{\tau} ( \Psi(w_k) -  \Psi(w_{k-1}) ) \dx x                                                                                                                                                      \\
		                             & = \, \frac{\epsilon(\gamma+1)}{\tau \gamma}\int_\Omega (w_k-w_{k-1})( w_k -  w_{k-1} ) \dx x + \frac{1}{\tau}\int_\Omega (w_k-w_{k-1})( w_k^{\gamma+1} -  w_{k-1}^{\gamma+1}  ) \dx x                              \\
		                             & \geq \frac{\epsilon(\gamma+1)}{\tau \gamma}\int_\Omega \abs{ w_k -  w_{k-1} }^2 \dx x + \frac{4(\gamma+1)}{\tau(\gamma+2)^2}\int_\Omega\abs*{ w_k^{\frac{\gamma+2}{2}} -  w_{k-1}^{\frac{\gamma+2}{2}}  }^2 \dx x.
	\end{align*}

	Next, we consider the contribution coming from the right-hand side, and we estimate
	\begin{align*}
		\mathrm{React}_k
		 & = \int_\Omega (u_k F(w_k)+v_kG(w_k)) ( \Psi(w_k) -  \Psi(w_{k-1}) ) \dx x                                                                                         \\
		 & =  \int_\Omega (R_u+R_v) \prt*{ \epsilon\dfrac{\gamma+1}{\gamma}  (w_k -w_{k-1})+(w_k^{\gamma + 1}-w_{k-1}^{\gamma + 1})} \dx x                                   \\
		 & \leq \frac{\epsilon(\gamma+1)}{2\tau \gamma}\int_\Omega \abs{ w_k -  w_{k-1} }^2 \dx x +\tau \frac{\epsilon(\gamma+1)}{2 \gamma}\int_\Omega \abs{R_u+R_v}^2 \dx x \\
		 & \qquad + \int_\Omega  (R_u+R_v) (w_k^{\gamma + 1}-w_{k-1}^{\gamma + 1})\dx x.
	\end{align*}
	Since the function $K(x):= x^{\frac{2(\gamma+1)}{\gamma+2}}$ is convex there holds $ K(x) - K(y) \leq K'(x)(x-y)$, and we can write, for $x=w_k^{\frac{\gamma+2}{2}}$ and $y=w_{k-1}^{\frac{\gamma+2}{2}}$, that
	\begin{align}
		\label{ineq:wtogammap1-wtogammap2o2}
		(w_k^{\gamma + 1}-w_{k-1}^{\gamma + 1}) \leq \frac{2(\gamma+1)}{\gamma+2} w_k^{\frac\gamma2}  \prt*{w_k^{\frac{\gamma+2}{2}}-w_{k-1}^{\frac{\gamma+2}{2}}} .
	\end{align}
	Then, upon multiplying by $R_u + R_v \geq 0$ and using Young's inequality, we derive
	\begin{align*}
		\int_\Omega & (R_u+R_v) (w_k^{\gamma + 1}-w_{k-1}^{\gamma + 1})\dx x                                                                                                                                                                \\
		            & \leq \frac{2(\gamma+1)}{\gamma+2} \int_\Omega \left( \frac{(\gamma+2)\tau }{4}  (R_u+R_v)^2 w_k^\gamma + \frac{1}{(\gamma+2)\tau }\abs*{ w_k^{\frac{\gamma+2}{2}} -  w_{k-1}^{\frac{\gamma+2}{2}}}^2   \right) \dx x.
	\end{align*}
	Thus, we obtain the bound
	\begin{align}
		\mathrm{React}_k \leq \frac{\epsilon (\gamma + 1)}{2\tau\gamma} \int_\Omega \abs*{w_k - w_{k-1}}^2 \dx x +  \frac{2 (\gamma + 1)}{\tau (\gamma + 2)^2} \int_\Omega \abs*{ w_k^{\frac{\gamma+2}{2}} -  w_{k-1}^{\frac{\gamma+2}{2}}}^2 \dx x + C\tau .
	\end{align}
	Thus, we conclude that
	\begin{align}
		\mathcal J_k^1 - \mathrm{React}_k \geq  \frac{\epsilon (\gamma + 1)}{2\tau\gamma} \int_\Omega \abs*{w_k - w_{k-1}}^2 \dx x +  \frac{2 (\gamma + 1)}{\tau (\gamma + 2)^2} \int_\Omega \abs*{ w_k^{\frac{\gamma+2}{2}} -  w_{k-1}^{\frac{\gamma+2}{2}}}^2 \dx x - C\tau.
	\end{align}
	Next, let us address the second term on the left-hand side of \eqref{eq:scheme-tested-psi}, $\mathcal J_k^2$. By using $(a-b)a = \dfrac{1}{2}a^2- \dfrac{1}{2}b^2 +\dfrac{1}{2} (a-b)^2$, it can be rewritten as follows:
	\begin{align*}
		\mathcal J_k^2 = \frac{\gamma}{\gamma+1} & \int_\Omega (\nabla \Psi(w_k) - \nabla  \Psi(w_{k-1}) )\cdot \nabla  \Psi(w_k)\dx x                 \\
		=                                        & \frac{\gamma+1}{\gamma}\epsilon^2 \int_\Omega (\nabla w_k - \nabla  w_{k-1} )\cdot \nabla  w_k\dx x
		+ \epsilon \int_\Omega (\nabla w_k^{\gamma +1} - \nabla  w_{k-1}^{\gamma +1} )\cdot \nabla  w_k\dx x                                           \\
		                                         & +
		\epsilon \int_\Omega (\nabla w_k - \nabla  w_{k-1} )\cdot \nabla  w_k^{\gamma +1}\dx x
		+ \frac{\gamma}{\gamma+1} \int_\Omega (\nabla w_k^{\gamma +1} - \nabla  w_{k-1}^{\gamma +1} )\cdot \nabla  w_k^{\gamma +1}\dx x                \\
		=                                        &
		\frac{\gamma+1}{2\gamma}\epsilon^2
		\left(
		\norm{\nabla w_k}_{L^2(\Omega)}^2
		-
		\norm{\nabla w_{k-1}}_{L^2(\Omega)}^2
		+ \norm{\nabla w_k - \nabla w_{k-1}}_{L^2(\Omega)}^2
		\right)
		+I_k+I\!\!I_k                                                                                                                                  \\
		                                         & +
		\frac{\gamma}{2(\gamma+1)}
		\left(
		\norm{\nabla w_k^{\gamma +1}}_{L^2(\Omega)}^2
		-
		\norm{\nabla w_{k-1}^{\gamma +1}}_{L^2(\Omega)}^2
		+ \norm{\nabla w_k^{\gamma +1} - \nabla w_{k-1}^{\gamma +1}}_{L^2(\Omega)}^2
		\right),
	\end{align*}
	where we have defined
	\[
		I_k := \epsilon \int_\Omega (\nabla w_k^{\gamma +1} - \nabla  w_{k-1}^{\gamma +1} )\cdot \nabla  w_k\dx x ,
		\qquad
		I\!\!I_k := \epsilon \int_\Omega (\nabla w_k - \nabla  w_{k-1} )\cdot \nabla  w_k^{\gamma +1}\dx x.
	\]
	Summing $I_k$ over $k=1,2,\dots, m$, for  $1\leq m\leq N$ and using summation by parts, we obtain
	\begin{align*}
		\sum_{k=1}^m I_k
		= & \epsilon \int_\Omega \nabla w_m^{\gamma +1} \cdot \nabla  w_m\dx x
		-\epsilon \int_\Omega \nabla w_0^{\gamma +1} \cdot \nabla  w_0\dx x                                                          \\
		  & - \epsilon \sum_{k=1}^m \int_\Omega \nabla  w_{k-1}^{\gamma +1} \cdot \left( \nabla  w_k - \nabla  w_{k-1} \right)\dx x.
	\end{align*}
	Therefore, using $ab \le \frac{\alpha}{2}a^2+\frac{1}{2\alpha}b^2$ with $\alpha = \frac{\gamma}{(\gamma+1)\epsilon}$, we have
	\begin{align*}
		\sum_{k=1}^m (I_k+I\!\!I_k)
		=   &
		\frac{4(\gamma+1)}{(\gamma+2)^2}
		\epsilon \norm{\nabla w_m^{(\gamma +2)/2}}_{L^2(\Omega)}^2
		-
		\frac{4(\gamma+1)}{(\gamma+2)^2}
		\epsilon \norm{\nabla w_0^{(\gamma +2)/2}}_{L^2(\Omega)}^2                                                                                                   \\
		    & +
		\epsilon \sum_{k=1}^m\int_\Omega \left( \nabla  w_{k}^{\gamma +1}-\nabla  w_{k-1}^{\gamma +1}\right) \cdot \left( \nabla  w_k - \nabla  w_{k-1} \right)\dx x \\
		\ge &
		\frac{4(\gamma+1)}{(\gamma+2)^2}
		\epsilon \norm{\nabla w_m^{(\gamma +2)/2}}_{L^2(\Omega)}^2
		-
		\frac{4(\gamma+1)}{(\gamma+2)^2}
		\epsilon \norm{\nabla w_0^{(\gamma +2)/2}}_{L^2(\Omega)}^2                                                                                                   \\
		    &
		-
		\frac{\gamma}{2(\gamma+1)}
		\sum_{k=1}^m  \norm{\nabla w_k^{\gamma +1} - \nabla w_{k-1}^{\gamma +1}}_{L^2(\Omega)}^2
		-
		\frac{\gamma+1}{2\gamma}\epsilon^2
		\sum_{k=1}^m  \norm{\nabla w_k - \nabla w_{k-1}}_{L^2(\Omega)}^2 .
	\end{align*}
	Then, upon summing over $k=1, \ldots, m$, we observe that
	\begin{align}
		\sum_{k=1}^m \left(\mathcal J_k^1 + \mathcal J_k^2 - \mathrm{React}_k \right) = 0
	\end{align}
	yields
	\begin{align*}
		\frac{2(\gamma+1)}{(\gamma+2)^2} & \tau \sum_{k=1}^m  \int_\Omega \abs*{ \frac{w_k^{\frac{\gamma+2}{2}} -  w_{k-1}^{\frac{\gamma+2}{2}} }{\tau} }^2 \dx x +\frac{(\gamma+1)}{ 2\gamma} \epsilon\tau \sum_{k=1}^m \int_\Omega \abs*{ \frac{w_k -  w_{k-1}}{\tau} }^2 \dx x \\
		                                 & \qquad
		+ \frac{\gamma+1}{2\gamma}\epsilon^2
		\norm{\nabla w_m}_{L^2(\Omega)}^2
		+\frac{\gamma}{2(\gamma+1)}
		\norm{\nabla w_m^{\gamma+1}}_{L^2(\Omega)}^2
		+\frac{4(\gamma+1)}{(\gamma+2)^2}
		\epsilon \norm{\nabla w_m^{(\gamma +2)/2}}_{L^2(\Omega)}^2                                                                                                                                                                                                                \\
		                                 & \leq \frac{\gamma+1}{2\gamma} \epsilon^2 \norm{\nabla w_0}_{L^2(\Omega)}^2 + \frac{\gamma}{2(\gamma+1)}  \norm{\nabla w_0^{\gamma+1}}_{L^2(\Omega)}^2
		+ \frac{4(\gamma+1)}{(\gamma+2)^2}
		\epsilon \norm{\nabla w_0^{(\gamma +2)/2}}_{L^2(\Omega)}^2
		+C                                                                                                                                                                                                                                                                        \\
		                                 & \leq C,
	\end{align*}
	where $C>0$ is independent of $\epsilon, \tau>0 $, as $w_0\in H^1(\Omega)$ and $w_0\le M$ almost everywhere. This concludes the proof.
\end{proof}

\begin{corollary}[Time and Space Regularities of $w^{\gamma+1}$]
	\label{cor:H1L2_LinfH1_gammap1}
	There holds
	\begin{align*}
		\norm{\partial_t \widehat{w_{\tau,\epsilon}^{\gamma +1}}}_{L^2(\Omega_T)}
		+
		\norm{\nabla \widehat{w_{\tau,\epsilon}^{\gamma +1 }}}_{L^\infty(0,T;L^2(\Omega))}
		\leq C,
	\end{align*}
	where the constant $C>0$ is independent of $\epsilon>0$ and $\tau>0$.
\end{corollary}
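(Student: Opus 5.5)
The corollary will be deduced directly from Proposition \ref{prop:timeDeriv} together with the uniform bound $0\le w_k\le M$ from Lemma \ref{lem:buffer-this-didnt-have-a-label}. Note that $\widehat{w_{\tau,\epsilon}^{\gamma+1}}$ is the piecewise linear interpolation of the sequence $(w_k^{\gamma+1})_{k=0}^N$. On $(t^{k-1},t^k]$ its time derivative is $(w_k^{\gamma+1}-w_{k-1}^{\gamma+1})/\tau$.

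\textbf{Time derivative.} We write $w_k^{\gamma+1}=K(w_k^{(\gamma+2)/2})$ with $K(x)=x^{2(\gamma+1)/(\gamma+2)}$. This $K$ is Lipschitz on $[0,M^{(\gamma+2)/2}]$ because its exponent is at least $1$. The Lipschitz constant is $K'(M^{(\gamma+2)/2})=\frac{2(\gamma+1)}{\gamma+2}M^{\gamma/2}$. Hence pointwise
\[
\abs*{w_k^{\gamma+1}-w_{k-1}^{\gamma+1}}\le \frac{2(\gamma+1)}{\gamma+2}M^{\gamma/2}\abs*{w_k^{\frac{\gamma+2}{2}}-w_{k-1}^{\frac{\gamma+2}{2}}}.
\]
We square this, divide by $\tau^2$, integrate over $\Omega$, multiply by $\tau$ and sum over $k$. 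This gives
\[
\norm{\partial_t\widehat{w_{\tau,\epsilon}^{\gamma+1}}}_{L^2(\Omega_T)}\le C(M,\gamma)\norm{\partial_t\widehat{w_{\tau,\epsilon}^{(\gamma+2)/2}}}_{L^2(\Omega_T)}.
\]
The right-hand side is bounded uniformly by the first estimate of Proposition \ref{prop:timeDeriv}.

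\textbf{Gradient.} For $t\in(t^{k-1},t^k]$, linearity of the gradient gives
\[
\nabla\widehat{w_{\tau,\epsilon}^{\gamma+1}}(t)=(1-\theta)\nabla w_{k-1}^{\gamma+1}+\theta\nabla w_k^{\gamma+1},\qquad \theta=(t-t^{k-1})/\tau\in(0,1].
\]
So $\norm{\nabla\widehat{w_{\tau,\epsilon}^{\gamma+1}}(t)}_{L^2(\Omega)}\le\max\{\norm{\nabla w_{k-1}^{\gamma+1}}_{L^2(\Omega)},\norm{\nabla w_k^{\gamma+1}}_{L^2(\Omega)}\}$. For $k\ge1$, each of these norms is controlled by $\norm{\nabla\overline w_{\tau,\epsilon}^{\gamma+1}}_{L^\infty(0,T;L^2(\Omega))}$, which Proposition \ref{prop:timeDeriv} bounds. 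Strictly, that proof bounds $\norm{\nabla w_m^{\gamma+1}}_{L^2(\Omega)}$ for every $1\le m\le N$. For $k=0$ we use $w_0\in H^1(\Omega)$ and $0\le w_0\le M$ from (H\ref{H2}). These give $\nabla w_0^{\gamma+1}=(\gamma+1)w_0^\gamma\nabla w_0$, hence $\norm{\nabla w_0^{\gamma+1}}_{L^2(\Omega)}\le(\gamma+1)M^\gamma\norm{\nabla w_0}_{L^2(\Omega)}$.

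\textbf{Main subtlety.} The argument is routine. The only point needing care is the $k=0$ endpoint in the piecewise linear interpolant, since the piecewise constant $L^\infty(0,T;L^2)$ bound does not literally include $w_0$; the estimate above handles it. A secondary check is that $\gamma\ge1$ makes the exponent of $K$ at least $1$, so $K$ is indeed Lipschitz on the bounded range rather than merely Hölder.
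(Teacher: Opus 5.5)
Your proof is correct and follows the paper's argument. The time derivative is controlled by comparing increments of $w_k^{\gamma+1}$ with those of $w_k^{(\gamma+2)/2}$ through the convexity of $K$ and $w_k\le M$; your Lipschitz formulation gives directly the two-sided bound that the one-sided inequality \eqref{ineq:wtogammap1-wtogammap2o2} only yields after swapping the roles of $w_k$ and $w_{k-1}$. The gradient bound comes from Proposition~\ref{prop:timeDeriv}, as in the paper. Your separate treatment of the node $k=0$ actually tightens the paper's argument: the paper's claimed inequality $\norm{\nabla \widehat{w_{\tau,\epsilon}^{\gamma+1}}}_{L^\infty(0,T;L^2(\Omega))}\le\norm{\nabla \overline w_{\tau,\epsilon}^{\gamma+1}}_{L^\infty(0,T;L^2(\Omega))}$ silently ignores $w_0$ on the first time interval.
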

\begin{proof}
	By \eqref{ineq:wtogammap1-wtogammap2o2} together with Lemma~\ref{lem:buffer-this-didnt-have-a-label}, we obtain
	\begin{align}
		\norm{\partial_t  \widehat{w_{\tau,\epsilon}^{(\gamma + 1)}}}_{L^2(\Omega_T)} \leq \frac{2(\gamma +1)}{\gamma+2}M^{\gamma/2} \norm{\partial_t  \widehat{w_{\tau,\epsilon}^{(\gamma + 2)/2}}}_{L^2(\Omega_T)},
	\end{align}
	which is bounded by Proposition~\ref{prop:timeDeriv}. The space control follows from the fact that
	\begin{align}
		\norm{\nabla \widehat{w_{\tau,\epsilon}^{\gamma +1 }}}_{L^\infty(0,T;L^2(\Omega))}\leq \norm{\nabla \overline w_{\tau,\epsilon}^{\gamma +1 }}_{L^\infty(0,T;L^2(\Omega))} \leq C
	\end{align}
	from Proposition~\ref{prop:timeDeriv}.
\end{proof}

Our next result pertains to the time regularity of the three densities.
\begin{lemma}[Time Regularity]
	\label{lem:time-regularity}
	There holds
	\begin{align}
		\norm*{\partial_t \widehat u_{\tau, \epsilon}}_{L^2(0,T;H^1(\Omega)')} + \norm*{\partial_t \widehat v_{\tau, \epsilon}}_{L^2(0,T;H^1(\Omega)')} + \norm*{\partial_t \widehat w_{\tau, \epsilon}}_{L^2(0,T;H^1(\Omega)')} \leq C,
	\end{align}
	for some $C>0$, independent of $\epsilon, \tau>0$.
\end{lemma}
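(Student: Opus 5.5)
We prove the bound by duality: test each equation of Scheme~\eqref{eq:scheme} with an arbitrary $\varphi\in L^2(0,T;H^1(\Omega))$ and bound every term using the uniform estimates already established. Since $\partial_t \widehat u_{\tau,\epsilon} = (u_k-u_{k-1})/\tau$ on $(t^{k-1},t^k]$, the weak form of \eqref{eq:scheme:u} gives, for $\varphi\in L^2(0,T;H^1(\Omega))$,
\begin{align*}
\int_0^T \langle \partial_t \widehat u_{\tau,\epsilon},\varphi\rangle \dx t
= -\epsilon\int_0^T\!\!\int_\Omega \nabla \overline u_{\tau,\epsilon}\cdot\nabla\varphi \dx x\dx t
-\int_0^T\!\!\int_\Omega \overline u_{\tau,\epsilon}\nabla \overline w_{\tau,\epsilon}^\gamma\cdot\nabla\varphi \dx x\dx t
+\int_0^T\!\!\int_\Omega R_u(\overline u_{\tau,\epsilon},\overline v_{\tau,\epsilon},\overline w_{\tau,\epsilon})\varphi \dx x\dx t .
\end{align*}
The $\epsilon$-term is bounded by $\epsilon\norm{\nabla\overline u_{\tau,\epsilon}}_{L^2(\Omega_T)}\norm{\nabla\varphi}_{L^2(\Omega_T)}\le C\norm{\varphi}_{L^2(0,T;H^1(\Omega))}$ by Lemma~\ref{lem:regularity-gradu-gradv}. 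This is exactly where the scaling $\epsilon\norm{\nabla \overline u}$ from that lemma is needed. The reaction term is bounded by $2MM^R|\Omega_T|^{1/2}\norm{\varphi}_{L^2(\Omega_T)}$, using $0\le u_k,v_k,w_k\le M$ from Lemma~\ref{lem:buffer-this-didnt-have-a-label} and the definition \eqref{def:MR}.

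The main point is the cross-diffusion flux. We use $0\le \overline u_{\tau,\epsilon}\le M$ together with Lemma~\ref{lem:space-regularity-nonlinearity} at $p=\gamma$, which is admissible since $\gamma\ge(\gamma+1)/2$ for $\gamma\ge1$. This gives
\[
\Big|\int_0^T\!\!\int_\Omega \overline u_{\tau,\epsilon}\nabla\overline w_{\tau,\epsilon}^\gamma\cdot\nabla\varphi\Big|\le M\norm{\nabla \overline w_{\tau,\epsilon}^{\gamma}}_{L^2(\Omega_T)}\norm{\nabla\varphi}_{L^2(\Omega_T)}\le C\norm{\varphi}_{L^2(0,T;H^1(\Omega))}.
\]
One subtlety is that the regularised scheme is written with $\nabla\cdot(u_k\nabla w_k^\gamma)$, whereas the linearised form uses $\gamma u_k w_k^{\gamma-1}\nabla w_k$. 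These agree once the cut-off is removed, because $w_k\in H^1\cap L^\infty$ and the chain rule applies. For $\gamma\ge1$ there is no singularity at $w_k=0$, so this identification is unproblematic. Taking the supremum over $\norm{\varphi}_{L^2(0,T;H^1)}\le1$ yields the bound for $\partial_t\widehat u_{\tau,\epsilon}$.

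The argument for $\widehat v_{\tau,\epsilon}$ is identical. For $\widehat w_{\tau,\epsilon}$ we may simply use $\widehat w_{\tau,\epsilon}=\widehat u_{\tau,\epsilon}+\widehat v_{\tau,\epsilon}$ together with the triangle inequality. Alternatively, we can work directly with \eqref{eq:scheme:w}, writing $\frac{\gamma}{\gamma+1}\nabla w_k^{\gamma+1}=w_k\nabla w_k^\gamma$ and using $\sqrt\epsilon\norm{\nabla\overline w_{\tau,\epsilon}}\le C$ from Lemma~\ref{lem:regularity-gradu-gradv}. The resulting constants are independent of $\epsilon$ and $\tau$ because all the inputs are. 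No step is a genuine obstacle; the only care needed is to invoke the correctly $\epsilon$-weighted gradient bounds.
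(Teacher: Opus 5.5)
Your proof is correct and follows the paper's duality argument. You bound the $\epsilon$-term via Lemma~\ref{lem:regularity-gradu-gradv}, the cross-diffusion flux via $0\le\overline u_{\tau,\epsilon}\le M$ and Lemma~\ref{lem:space-regularity-nonlinearity} with $p=\gamma$, and the reaction term via the $L^\infty$ bounds. The only minor difference is for $\widehat w_{\tau,\epsilon}$: the paper bounds the flux $\nabla\Psi(\overline w_{\tau,\epsilon})$ using Proposition~\ref{prop:timeDeriv}, whereas your triangle inequality with $\widehat w_{\tau,\epsilon}=\widehat u_{\tau,\epsilon}+\widehat v_{\tau,\epsilon}$ works equally well.
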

\begin{proof}
	We know from the definition of the scheme \eqref{eq:scheme} that
	\begin{align*}
		\norm*{\partial_t \widehat w_{\tau, \epsilon}}_{L^2(0,T;H^1(\Omega)')}
		 & = \ds \sup_{0\neq \phi\in L^2(0,T;H^1(\Omega))} \frac{ \ds \abs*{\int_0^T\skp*{\partial_t \widehat w_{\tau, \epsilon}, \phi}_{H^1(\Omega)', H^1(\Omega)}\dx t}}{\norm{\phi}_{L^2(0,T;H^1(\Omega))}}                   \\
		 & = \sup_{0\neq \phi\in L^2(0,T;H^1(\Omega))} \ds \frac{  \abs*{ \ds \int_0^T\int_\Omega  \nabla \Psi(\overline w_{\tau, \epsilon} ) \cdot  \nabla \phi - (R_u+R_v) \phi \dx x \dx t}}{\|\phi\|_{L^2(0,T;H^1(\Omega))}} \\
		 & \leq C,
	\end{align*}
	having used the uniform $L^2$-bound on $\nabla \Psi$ in Proposition \ref{prop:timeDeriv} and the $L^\infty$-control from Lemma \ref{lem:buffer-this-didnt-have-a-label}. The time regularity of $\partial_t \widehat u_{\tau, \epsilon}$ and $\partial_t \widehat v_{\tau, \epsilon}$ follow a similar argument. Indeed,
	\begin{align*}
		\norm*{\partial_t \widehat u_{\tau, \epsilon}} & _{L^2(0,T;H^1(\Omega)')}
		= \ds \sup_{0\neq \phi\in L^2(0,T;H^1(\Omega))} \frac{ \ds \abs*{\int_0^T\skp*{\partial_t \widehat u_{\tau, \epsilon}, \phi}_{H^1(\Omega)', H^1(\Omega)}\dx t}}{\norm{\phi}_{L^2(0,T;H^1(\Omega))}}                                                                                                                                                                   \\
		                                               & = \sup_{0\neq \phi\in L^2(0,T;H^1(\Omega))} \ds \frac{  \abs*{ \ds \int_0^T\int_\Omega \epsilon \nabla \overline u_{\tau, \epsilon} \cdot \nabla \phi + \overline u_{\tau, \epsilon} \nabla \overline w_{\tau, \epsilon}^{\gamma} \cdot  \nabla \phi - (R_u+R_v) \phi \dx x \dx t}}{\|\phi\|_{L^2(0,T;H^1(\Omega))}} \\
		                                               & \leq \epsilon \norm{\nabla \overline u_{\tau, \epsilon}}_{L^2(\Omega_T)}  +  M \norm{\nabla \overline w_{\tau, \epsilon}^\gamma}_{L^2(\Omega_T)}\| \nabla \Psi( \overline w) \|_{L^2(\Omega_T) }+2M^R\| \overline w\|_{L^2(\Omega_T) }                                                                               \\
		                                               & \leq C
	\end{align*}
	by the uniform estimates on the space regularity provided by Lemma \ref{lem:regularity-gradu-gradv} and Lemma \ref{lem:space-regularity-nonlinearity}. Note that $C>0$ is independent of $\epsilon, \tau>0$.
\end{proof}

\begin{lemma}[Time shifts of the nonlinearity]
	\label{lem:time-shift-nonlinearity}
	Let $\tau >0.$ There holds
	\begin{align*}
		\norm{\overline w_{\tau, \epsilon}^{\gamma+1}(\cdot +\tau)-\overline w_{\tau, \epsilon}^{\gamma+1}}_{L^2(0,T-\tau,L^2(\Omega))}\leq C \tau,
	\end{align*}
	for some constant $C > 0$, independent of $\tau, \epsilon>0$.
\end{lemma}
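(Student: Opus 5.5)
The plan is to reduce the time shift of the piecewise constant interpolation to the discrete time derivative already controlled in Corollary~\ref{cor:H1L2_LinfH1_gammap1}. For $t\in(t^{k-1},t^k]$ with $k\le N-1$, we have $t+\tau\in(t^k,t^{k+1}]$. Hence
\begin{align*}
	\overline w_{\tau,\epsilon}^{\gamma+1}(x,t+\tau)-\overline w_{\tau,\epsilon}^{\gamma+1}(x,t) = w_{k+1}^{\gamma+1}(x)-w_k^{\gamma+1}(x),
\end{align*}
independently of the position of $t$ inside the subinterval. Integrating over $(0,T-\tau)$ therefore gives exactly
\begin{align*}
	\norm{\overline w_{\tau,\epsilon}^{\gamma+1}(\cdot+\tau)-\overline w_{\tau,\epsilon}^{\gamma+1}}_{L^2(0,T-\tau;L^2(\Omega))}^2
	= \tau\sum_{k=1}^{N-1}\norm{w_{k+1}^{\gamma+1}-w_k^{\gamma+1}}_{L^2(\Omega)}^2 .
\end{align*}

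Next, I relate this sum to the piecewise linear interpolation $\widehat{w_{\tau,\epsilon}^{\gamma+1}}$ of the sequence $(w_k^{\gamma+1})_k$. On $(t^{k-1},t^k]$ its time derivative equals $(w_k^{\gamma+1}-w_{k-1}^{\gamma+1})/\tau$, which gives
\begin{align*}
	\norm{\partial_t\widehat{w_{\tau,\epsilon}^{\gamma+1}}}_{L^2(\Omega_T)}^2 = \tau\sum_{k=1}^N\norm*{\frac{w_k^{\gamma+1}-w_{k-1}^{\gamma+1}}{\tau}}_{L^2(\Omega)}^2 .
\end{align*}
Consequently, the right-hand side of the first display is at most $\tau^2\norm{\partial_t\widehat{w_{\tau,\epsilon}^{\gamma+1}}}_{L^2(\Omega_T)}^2$. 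By Corollary~\ref{cor:H1L2_LinfH1_gammap1}, this is bounded by $C^2\tau^2$ with $C$ independent of $\epsilon$ and $\tau$. Taking square roots yields the claim.

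If one prefers to be safe about the validity of the Corollary's time-derivative bound, it can be re-derived from the pointwise inequality \eqref{ineq:wtogammap1-wtogammap2o2}, which rests on convexity of $K$. That inequality gives
\begin{align*}
	\abs{w_k^{\gamma+1}-w_{k-1}^{\gamma+1}}\le \frac{2(\gamma+1)}{\gamma+2}M^{\gamma/2}\abs*{w_k^{\frac{\gamma+2}{2}}-w_{k-1}^{\frac{\gamma+2}{2}}}.
\end{align*}
The required one-sided bound in the appropriate ordering follows by exchanging the roles of $w_k$ and $w_{k-1}$, using $\max(w_k,w_{k-1})\le M$ from Lemma~\ref{lem:buffer-this-didnt-have-a-label}. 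Combined with the discrete bound in Proposition~\ref{prop:timeDeriv}, which controls $\tau\sum_k\norm{(w_k^{(\gamma+2)/2}-w_{k-1}^{(\gamma+2)/2})/\tau}_{L^2}^2$, this gives the bound directly.

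The only real subtlety is the absolute value in the last step. The convexity inequality as written is one-sided, so I would verify the two-sided Lipschitz estimate of $s\mapsto s^{2(\gamma+1)/(\gamma+2)}$ on $[0,M^{(\gamma+2)/2}]$ with constant $\frac{2(\gamma+1)}{\gamma+2}M^{\gamma/2}$ via the mean value theorem. Everything else is bookkeeping of indices.
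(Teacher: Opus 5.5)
Your proof is correct and follows essentially the paper's route. The paper applies the pointwise bound \eqref{ineq:wtogammap1-wtogammap2o2} directly to the shifted piecewise constant interpolant and then uses the discrete time-derivative control of $w^{(\gamma+2)/2}$ from Proposition~\ref{prop:timeDeriv}, which is exactly what your argument via Corollary~\ref{cor:H1L2_LinfH1_gammap1} (and your fallback) amounts to, while your index bookkeeping and two-sided Lipschitz justification make explicit the steps the paper leaves implicit.
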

\begin{proof}
	By virtue of \eqref{ineq:wtogammap1-wtogammap2o2}, we get
	\begin{align*}
		\abs*{\overline w_{\tau, \epsilon}^{\gamma+1}(t+\tau)-\overline w_{\tau, \epsilon}^{\gamma+1}(t)} \leq C(\gamma, M)\abs*{\overline w_{\tau, \epsilon}^{(\gamma+2)/2}(t+\tau)-\overline w_{\tau, \epsilon}^{(\gamma+2)/2}(t)},
	\end{align*}
	where the constant only depends on $M$ and $\gamma$. In particular, it is independent of  $\tau, \epsilon > 0$. Thus, we get
	\begin{align*}
		\norm{\overline w_{\tau, \epsilon}^{\gamma+1}(\cdot + \tau)-\overline w_{\tau, \epsilon}^{\gamma+1}(t)}_{L^2(0,T-\tau,L^2(\Omega))}  \leq C(\gamma, M) \tau,
	\end{align*}
	which concludes the proof.
\end{proof}

\section{Compactness \& Convergence results}
\label{sec:compconv}
We have garnered enough regularity information on our time-discrete approximation to extract convergent subsequences appropriate for the existence result of our time-discrete scheme and its convergence to weak solutions.

\subsection{Existence of solutions to our scheme  ($\epsilon \to 0$)}
This section is dedicated to establishing the existence of solutions to our scheme, that is, we prove Theorem~\ref{thm:existence-scheme}. Let us denote by $(u_{k,\epsilon}, v_{k,\epsilon}, w_{k,\epsilon})_{k=0, \ldots, N; \epsilon>0}$ a family of solutions. By the uniform $L^\infty$-bounds, there exist functions $(u_{k,0}, v_{k,0}, w_{k,0})_{k=0,\ldots, N}\subset  L^\infty(\Omega)$ such that, up to subsequences,
\begin{align}
	u_{k,\epsilon}\overset{*}{\rightharpoonup} u_{k,0}, \quad v_{k,\epsilon}\overset{*}{\rightharpoonup} v_{k,0}, \quad \text{and} \quad w_{k,\epsilon} \overset{*}{\rightharpoonup} w_{k,0},
\end{align}
for all $k=1, \ldots, N$,  weakly-* in $L^\infty(\Omega_T)$, as $\epsilon \to 0$. Moreover, from the uniform-in-$\epsilon$ gradient control in Proposition \ref{prop:timeDeriv}, we also have
\begin{align}
	\nabla w_{k,\epsilon}^{\gamma + 1} \rightharpoonup \nabla w_{k,0}^{\gamma +1},
\end{align}
weakly in $L^2(\Omega)$ (having identified the limit). Moreover
\begin{align}
	w_{k,\epsilon} \to w_{k,0},
\end{align}
pointwise almost everywhere and strongly in $L^p(\Omega)$ for any $1\leq p < \infty$. Finally, let us also recall that
\begin{align}
	\epsilon \norm{\nabla w_{k,\epsilon}} \to 0,
\end{align}
as $\epsilon \to 0$, by Lemma \ref{lem:regularity-gradu-gradv}. These convergences are enough to pass $\epsilon \to 0$ in Eq. \eqref{eq:scheme:w}, to get
\begin{align}
	\label{eq:lim-eps-to-0-w}
	\dfrac1\tau(w_{k,0} - w_{k-1,0}) - \dfrac{\gamma}{\gamma +1} \Delta w_{k,0}^{\gamma + 1} & = (R_u+R_v)(u_{k,0},v_{k,0},w_{k,0})
\end{align}
in the weak sense, where $u_{k,0}, v_{k,0}$ are the weak limits from above. It remains to show that these also satisfy the corresponding equations, Eqs. (\ref{eq:scheme:u}, \ref{eq:scheme:v}).

To this end, we prepare the following proposition.

\begin{proposition}
	\label{prop:strong-convergence-gradwgp1-disc}
	Up to a subsequence, there holds
	\begin{align*}
		\nabla w_{k,\epsilon}^{\gamma + 1}  \to \nabla w_{k,0}^{\gamma+ 1},
	\end{align*}
	strongly in $L^2(\Omega)$, for all $k=1,\ldots, N$,  as $\epsilon \to 0$.
\end{proposition}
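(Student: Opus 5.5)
Fix $k$ and write $W_\epsilon := w_{k,\epsilon}^{\gamma+1}$ and $W := w_{k,0}^{\gamma+1}$. The plan is the standard energy argument: weak convergence plus convergence of norms in $L^2(\Omega)$. Weak convergence $\nabla W_\epsilon \rightharpoonup \nabla W$ is already known. So it suffices to show
\[
\limsup_{\epsilon\to0}\int_\Omega \abs{\nabla W_\epsilon}^2 \dx x \le \int_\Omega \abs{\nabla W}^2 \dx x .
\]
To obtain this, I test \eqref{eq:scheme:w} with $\varphi = W_\epsilon \in H^1(\Omega)$, which is admissible because $w_{k,\epsilon}\in H^1\cap L^\infty$. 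This gives
\[
\frac{\gamma}{\gamma+1}\int_\Omega \abs{\nabla W_\epsilon}^2 \dx x
= -\epsilon\int_\Omega \nabla w_{k,\epsilon}\cdot\nabla W_\epsilon \dx x
+\int_\Omega\Big[(R_u+R_v)(u_{k,\epsilon},v_{k,\epsilon},w_{k,\epsilon}) - \frac{w_{k,\epsilon}-w_{k-1,\epsilon}}{\tau}\Big]W_\epsilon \dx x .
\]
I then test the limit equation \eqref{eq:lim-eps-to-0-w} with $W$ and compare the two identities term by term.

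The $\epsilon$-term is nonnegative:
\[
\nabla w_{k,\epsilon}\cdot\nabla W_\epsilon=(\gamma+1)w_{k,\epsilon}^\gamma\abs{\nabla w_{k,\epsilon}}^2\ge0 .
\]
Hence it enters with a favourable sign and can simply be dropped in the $\limsup$ inequality. This is convenient, since Lemma~\ref{lem:regularity-gradu-gradv} only gives $\sqrt\epsilon\|\nabla w_{k,\epsilon}\|$ bounded, not small.

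The time-difference term converges because $W_\epsilon \to W$ strongly in $L^2(\Omega)$ and $w_{k,\epsilon}\to w_{k,0}$ strongly. For the previous step I use $w_{k-1,\epsilon}\to w_{k-1,0}$ strongly, which follows from the same compactness applied at step $k-1$; at $k=1$ the datum $w_0$ is fixed. Pointwise convergence of $w_{k,\epsilon}$ comes from the uniform $H^1$ bound on $W_\epsilon$ via Rellich, together with invertibility of $r\mapsto r^{\gamma+1}$ on $[0,M]$.

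The main obstacle is the reaction term. Here $u_{k,\epsilon},v_{k,\epsilon}$ converge only weakly-*, while $R_u+R_v = uF(w)+vG(w)$ is linear in $(u,v)$ with coefficients $F(w_{k,\epsilon}),G(w_{k,\epsilon})$. By continuity of $F,G$ and the uniform bound $M^R$, these coefficients converge strongly in every $L^p$, $p<\infty$. Multiplied by $W_\epsilon$, which also converges strongly and is bounded, each product (weak-* factor)$\times$(strong factor) converges. The limit is exactly $\int (R_u+R_v)(u_{k,0},v_{k,0},w_{k,0})W\dx x$, matching the right-hand side of \eqref{eq:lim-eps-to-0-w}.

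Combining these gives the $\limsup$ inequality. Weak lower semicontinuity then yields $\|\nabla W_\epsilon\|_{L^2}\to\|\nabla W\|_{L^2}$, and in the Hilbert space $L^2(\Omega)$ weak convergence plus convergence of norms gives strong convergence. I do this for each $k=1,\dots,N$ in turn, extracting a diagonal subsequence common to all $k$; this is possible because $N$ is finite.
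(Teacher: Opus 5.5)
Your argument is correct. It is the same energy method as the paper's, organised differently.

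The paper subtracts \eqref{eq:lim-eps-to-0-w} from \eqref{eq:scheme:w} and tests with $W_\epsilon - W$, where $W_\epsilon = w_{k,\epsilon}^{\gamma+1}$ and $W = w_{k,0}^{\gamma+1}$. The reaction and time-difference contributions are then multiplied by $W_\epsilon - W \to 0$ in $L^2(\Omega)$, so they only need to be bounded in $L^\infty(\Omega)$. The paper absorbs the $\epsilon$-term by Young's inequality together with $\epsilon^2\norm{\nabla w_{k,\epsilon}}_{L^2(\Omega)}^2 \le C\epsilon/\tau$, which follows from Lemma~\ref{lem:regularity-gradu-gradv} for fixed $\tau$.

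You instead test each equation with its own solution and conclude from weak convergence plus the $\limsup$ of norms. This lets you discard the $\epsilon$-term purely by the sign of $\nabla w_{k,\epsilon}\cdot\nabla W_\epsilon$. In exchange, you must identify the actual limits of $\int_\Omega (R_u+R_v)(u_{k,\epsilon},v_{k,\epsilon},w_{k,\epsilon})W_\epsilon \dx x$ and of the time-difference products. You do this correctly, via the linearity of $R_u+R_v$ in $(u,v)$ and weak-times-strong convergence.

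One small correction to your motivation: for fixed $\tau$ the $\epsilon$-term is in fact small. Indeed, $\epsilon\norm{\nabla w_{k,\epsilon}}_{L^2(\Omega)}\le C\sqrt{\epsilon/\tau}$, while $\norm{\nabla W_\epsilon}_{L^2(\Omega)}$ stays bounded. The paper already uses this smallness to derive \eqref{eq:lim-eps-to-0-w}, on which your proof also relies. So the sign argument is a convenience rather than a necessity.
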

\begin{proof}
	We compute the difference between Eq. \eqref{eq:scheme:w} and Eq. \eqref{eq:lim-eps-to-0-w}, to get
	\begin{align}
		\label{eq:diff-wke-wk0}
		\prt*{\frac{w_{k,\epsilon} - w_{k-1,\epsilon}}{\tau} -\frac{w_{k,0} - w_{k-1,0}}{\tau}}
		- \frac{\gamma}{\gamma+1} \Delta (w_{k,\epsilon}^{\gamma+1} - w_{k,0}^{\gamma+1})
		 & = R_{k,\epsilon} - R_{k,0} + \epsilon \Delta w_{k,\epsilon}
	\end{align}
	where
	\begin{align*}
		R_{k,\epsilon} & := R_u(u_{k,\epsilon}, v_{k,\epsilon}, w_{k,\epsilon}) + R_v(u_{k,\epsilon}, v_{k,\epsilon}, w_{k,\epsilon}), \\[0.5em]
		R_{k,0}        & := R_u(u_{k,0}, v_{k,0}, w_{k,0}) + R_v(u_{k,0}, v_{k,0}, w_{k,0}),
	\end{align*}
	with $R_u, R_v$ defined in Eq. \eqref{eq:def-RuRv}. Next, upon  testing Eq. \eqref{eq:diff-wke-wk0} by $(w_{k,\epsilon}^{\gamma+1} - w_{k,0}^{\gamma+1})$ and integrating, we obtain
	\begin{align*}
		 & \int_\Omega \prt*{\frac{w_{k,\epsilon} - w_{k-1,\epsilon}}{\tau} -\frac{w_{k,0} - w_{k-1,0}}{\tau}}\prt*{w_{k,\epsilon}^{\gamma + 1} - w_{k,0}^{\gamma + 1}} \dx x + \frac{\gamma}{\gamma+1} \int_\Omega |\nabla (w_{k,\epsilon}^{\gamma+1} - w_{k,0}^{\gamma+1})|^2\dx x \\
		 & \leq \epsilon \int_\Omega \nabla w_{k,\epsilon} \cdot \nabla(w_{k,\epsilon}^{\gamma+1} - w_{k,0}^{\gamma+1}) \dx x + C_\text{RHS}\norm{w_{k,\epsilon}^{\gamma + 1} - w_{k,0}^{\gamma+1}}_{L^2(\Omega)}                                                                    \\
		 & \leq \frac12 \frac{\gamma}{\gamma+1} \int_\Omega |\nabla (w_{k,\epsilon}^{\gamma+1} - w_{k,0}^{\gamma+1})|^2
		\dx x + C\epsilon^2 \|\nabla w_{k,\epsilon}\|_{L^2(\Omega)}^2 + C_\text{RHS}\norm{w_{k,\epsilon}^{\gamma + 1} - w_{k,0}^{\gamma+1}}_{L^2(\Omega)}.
	\end{align*}
	Thus, we have
	\begin{align*}
		\frac12
		 & \frac{\gamma}{\gamma+1} \int_\Omega |\nabla(w_{k,\epsilon}^{\gamma+1} - w_{k,0}^{\gamma+1})|^2\dx x                                                                                                                                                                     \\
		 & \leq C\epsilon + C_\text{RHS}\norm{w_{k,\epsilon}^{\gamma + 1} - w_{k,0}^{\gamma+1}}_{L^2(\Omega)}  -\int_\Omega \prt*{\frac{w_{k,\epsilon} - w_{k-1,\epsilon}}{\tau} -\frac{w_{k,0} - w_{k-1,0}}{\tau}} \prt*{w_{k,\epsilon}^{\gamma + 1} - w_{k,0}^{\gamma + 1}}\dx x \\
		 & \to 0,
	\end{align*}
	as $\epsilon \to 0$, by the strong convergence of $w_{k,\epsilon}$ in any $L^p$. In conclusion, we have
	\begin{align*}
		\nabla w_{k,\epsilon}^{\gamma + 1}  \to \nabla w_{k,0}^{\gamma+ 1},
	\end{align*}
	strongly in $L^2(\Omega)$, as $\epsilon \to 0$, as claimed.
\end{proof}

In order to pass to the limit in Eq. \eqref{eq:scheme:u} and Eq. \eqref{eq:scheme:v}, the only step missing is the passage to the limit in the term $u\nabla w$ and $v\nabla w$. This can be achieved by observing
\begin{align*}
	\int_\Omega u_{k,\epsilon} \nabla w_{k,\epsilon}^\gamma \cdot \nabla \varphi \dx x = \frac{\gamma}{\gamma + 1}\int_{w_{k,\epsilon>0}} c_{k,\epsilon} \nabla w_{k,\epsilon}^{\gamma+1} \cdot \nabla \varphi \dx x = \frac{\gamma}{\gamma + 1}\int_{\Omega} c_{k,\epsilon} \nabla w_{k,\epsilon}^{\gamma+1} \cdot \nabla \varphi \dx x,
\end{align*}
where $c_{k,\epsilon} := \frac{u_{k,\epsilon}}{w_{k, \epsilon}} \chi_{\{w_{k,\epsilon}>0\}}$. Since $|c_{k,\epsilon}|\leq 1$, it has a weak-* limit in $L^\infty$, denoted by $c_{k,0}$. In conjunction with the strong convergence of $\nabla w_{k,\epsilon}^{\gamma + 1}$ from Proposition \ref{prop:strong-convergence-gradwgp1-disc}, we have
\begin{align*}
	\int_\Omega u_{k,\epsilon} \nabla w_{k,\epsilon}^\gamma \cdot \nabla \varphi \dx x \to  \frac{\gamma}{\gamma + 1}\int_{\Omega} c_{k,0} \nabla w_{k,0}^{\gamma+1} \cdot \nabla \varphi \dx x = \int_{\Omega} c_{k,0} w_{k,0}\nabla w_{k,0}^{\gamma} \cdot \nabla \varphi \dx x.
\end{align*}
The statement follows from identifying $c_{k,0} w_{k,0} = u_{k,0}$. The same procedure applies to Eq. \eqref{eq:scheme:v}, and we identify $(u_{k,0},v_{k,0},w_{k,0})_{k=1,\ldots, N}$ as a solution to our scheme, Scheme~\eqref{eq:intro-scheme}.
Thus, we complete the proof of Theorem~\ref{thm:existence-scheme}.

\subsection{Convergence of our scheme ($\tau \to 0$)}
\label{sec:limit-passage}
Let us summarise.
\begin{remark}
	\label{rem:weak-*-convergence}
	By Lemmas~\ref{lem:buffer-this-didnt-have-a-label}, \ref{lem:space-regularity-nonlinearity}, \ref{lem:time-regularity}, Proposition~\ref{prop:timeDeriv}, Corollary~\ref{cor:H1L2_LinfH1_gammap1}, there exist functions $u,v,w \in L^\infty(\Omega_T)$ and $\widehat u,\widehat v,\widehat w  \in L^\infty(\Omega_T)\cap H^1(0,T;H^1(\Omega)')$ such that $w^{p}, \widehat w^{p}\in L^2(0,T;H^1(\Omega)) \left(\frac{\gamma+1}{2}\le p<+\infty\right)$ and $w^{\gamma+1}, \widehat w^{\gamma+1}\in L^\infty(0,T;H^1(\Omega))$, and up to subsequences,
	\[
		\begin{aligned}
			(\overline{u}_{\tau,0},\overline{v}_{\tau,0},\overline{w}_{\tau,0})
			 & \overset{*}{\rightharpoonup} (u,v,w)
			 &                                                                    & \text{weakly-* in } L^\infty(\Omega_T),                                                      \\
			(\widehat{u}_{\tau,0},\widehat{v}_{\tau,0},\widehat{w}_{\tau,0})
			 & \rightharpoonup (\widehat u,\widehat v,\widehat w)
			 &                                                                    & \text{weakly in } H^1(0,T;H^1(\Omega)')                                                      \\
			 &                                                                    &                                                & \text{and weakly-* in } L^\infty(\Omega_T), \\[0.4em]
			(\overline{w}_{\tau,0}^{\,p},\widehat{w_{\tau,0}^{p}})
			 & \rightharpoonup (w^p, \widehat w^p)
			 &                                                                    & \text{weakly in } L^2(0,T;H^1(\Omega))
			\quad \left(\frac{\gamma+1}{2}\le p<+\infty\right),                                                                                                                  \\[0.4em]
			(\overline{w}_{\tau,0}^{\,\gamma+1},\widehat{w_{\tau,0}^{\gamma+1}})
			 & \overset{*}{\rightharpoonup} (w^{\gamma+1}, \widehat w^{\gamma+1})
			 &                                                                    & \text{weakly-* in } L^\infty(0,T;H^1(\Omega)),
		\end{aligned}
	\]
	as $\tau$ tends to zero.
	Since the piecewise linear and piecewise constant interpolations admit the same weak-* limit, it follows that $\widehat u=u$, $\widehat v=v$ and $\widehat w=w$.
\end{remark}

\begin{lemma}[Relative compactness of the non-linearity]
	\label{lem:rc-nonlinearity}
	Let $w$ be the limit identified in Remark~\ref{rem:weak-*-convergence}.
	Then, up to a subsequence,
	\begin{align*}
		\overline w_{\tau,0}^{\,\gamma+1}
		 & \to w^{\gamma+1}
		 &                  & \text{strongly in } L^q(\Omega_T),                                     \\
		\widehat{w_{\tau,0}^{\,\gamma+1}}
		 & \to w^{\gamma+1}
		 &                  & \text{strongly in } L^q(\Omega_T) \text{ and in } C([0,T];L^q(\Omega))\end{align*}
	for $ 1\leq q<\infty$.
\end{lemma}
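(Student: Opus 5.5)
The plan is to apply the Aubin--Lions--Simon lemma to the piecewise linear interpolation $\widehat{w_{\tau,0}^{\gamma+1}}$. Then transfer the resulting strong convergence to the piecewise constant interpolation using the time-shift estimate of Lemma~\ref{lem:time-shift-nonlinearity}. The last step is to identify the limit with $w^{\gamma+1}$ through monotonicity of $r\mapsto r^{\gamma+1}$.

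\textbf{Step 1.} The bounds of Proposition~\ref{prop:timeDeriv} and Corollary~\ref{cor:H1L2_LinfH1_gammap1} are uniform in $\epsilon$. By weak lower semicontinuity as $\epsilon\to0$, using Proposition~\ref{prop:strong-convergence-gradwgp1-disc} and the strong convergence of $w_{k,\epsilon}$, they pass to the solutions $w_{k,0}$ of the scheme. Hence $\widehat{w_{\tau,0}^{\gamma+1}}$ is bounded in $L^\infty(0,T;H^1(\Omega))$, and $\partial_t\widehat{w_{\tau,0}^{\gamma+1}}$ is bounded in $L^2(\Omega_T)$. The compact embedding $H^1(\Omega)\hookrightarrow\hookrightarrow L^2(\Omega)$ and Simon's compactness theorem (the version with $W^{1,2}$ in time) then give relative compactness in $C([0,T];L^2(\Omega))$. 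The uniform $L^\infty$ bound $M^{\gamma+1}$ and interpolation upgrade this to $C([0,T];L^q(\Omega))$ for every $q<\infty$. Consequently, along a subsequence, $\widehat{w_{\tau,0}^{\gamma+1}}\to z$ in $C([0,T];L^q)$ and in $L^q(\Omega_T)$, for some $z$.

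\textbf{Step 2.} Next I compare the two interpolations. On $(t^{k-1},t^k]$ we have $|\overline w^{\gamma+1}_{\tau,0}-\widehat{w^{\gamma+1}_{\tau,0}}|\le |w_k^{\gamma+1}-w_{k-1}^{\gamma+1}|$. Therefore
\[
\norm{\overline w_{\tau,0}^{\gamma+1}-\widehat{w_{\tau,0}^{\gamma+1}}}_{L^2(\Omega_T)}^2\le \tau\sum_{k=1}^N\norm{w_k^{\gamma+1}-w_{k-1}^{\gamma+1}}_{L^2(\Omega)}^2=\tau^2\norm{\partial_t\widehat{w_{\tau,0}^{\gamma+1}}}_{L^2(\Omega_T)}^2\le C\tau^2 .
\]
This is Lemma~\ref{lem:time-shift-nonlinearity} carried to the first step. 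It shows that $\overline w_{\tau,0}^{\gamma+1}\to z$ in $L^2(\Omega_T)$, and by the $L^\infty$ bound also in every $L^q(\Omega_T)$.

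\textbf{Step 3 (main obstacle).} The delicate point is identifying $z=w^{\gamma+1}$, because $w$ is only known as a weak-* limit of $\overline w_{\tau,0}$. I will use that $s\mapsto s^{1/(\gamma+1)}$ is continuous on $[0,M^{\gamma+1}]$. Strong convergence of $\overline w_{\tau,0}^{\gamma+1}$ in $L^1$, together with a further a.e.-convergent subsequence, gives $\overline w_{\tau,0}\to z^{1/(\gamma+1)}$ a.e. Dominated convergence, using the bound $M$, then gives $\overline w_{\tau,0}\to z^{1/(\gamma+1)}$ strongly in every $L^q(\Omega_T)$. Uniqueness of weak-* limits forces $w=z^{1/(\gamma+1)}$, that is, $z=w^{\gamma+1}$. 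The same limit also coincides with the weak limit $w^{\gamma+1}$ recorded in Remark~\ref{rem:weak-*-convergence}, so the notation there is consistent.

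Finally, the limit is unique along the chosen subsequence, so both interpolations converge to $w^{\gamma+1}$ in the stated topologies.
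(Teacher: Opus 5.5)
Your proof is correct, but it runs the compactness argument in the opposite order from the paper.

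\textbf{The paper's route.} It first gets precompactness of the piecewise constant interpolation $\overline w_{\tau,0}^{\gamma+1}$ in $L^q(0,T;L^2(\Omega))$ from the discrete Aubin--Lions lemma of Dreher--J\"ungel. That lemma is fed by the time-shift bound of Lemma~\ref{lem:time-shift-nonlinearity} and the $L^\infty(0,T;H^1(\Omega))$ bound. The paper then transfers this to $\widehat{w_{\tau,0}^{\gamma+1}}$ by comparing the two interpolations. Only afterwards does it invoke the classical Aubin--Lions lemma on $\widehat{w_{\tau,0}^{\gamma+1}}$, via Corollary~\ref{cor:H1L2_LinfH1_gammap1}, to obtain the $C([0,T];L^q(\Omega))$ statement.

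\textbf{Your route.} You apply Simon's theorem once, to $\widehat{w_{\tau,0}^{\gamma+1}}$. This already gives compactness in $C([0,T];L^q(\Omega))$, and hence in $L^q(\Omega_T)$. You then pass to the piecewise constant interpolation through the explicit estimate $\norm{\overline w_{\tau,0}^{\gamma+1}-\widehat{w_{\tau,0}^{\gamma+1}}}_{L^2(\Omega_T)}\le C\tau$.

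\textbf{Comparison.} The Aubin--Lions step is needed anyway for the $C([0,T];L^q(\Omega))$ claim, so your route shows that the Dreher--J\"ungel step is redundant here. The paper's route works directly with the piecewise constant interpolation. That is the natural tool when only time shifts are controlled and no derivative bound on the linear interpolant is available.

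Your proposal also makes explicit two points the paper leaves implicit. First, the $\epsilon$-uniform bounds of Proposition~\ref{prop:timeDeriv} and Corollary~\ref{cor:H1L2_LinfH1_gammap1} survive the limit $\epsilon\to0$; the paper's proof even writes $\widehat{w_{\tau,\epsilon}^{\gamma+1}}$ at that point. Second, the strong limit really is $w^{\gamma+1}$, which the paper effectively defers to Remark~\ref{rem:strong-convergence}.
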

\begin{proof}
	From Lemma ~\ref{lem:time-shift-nonlinearity} and Proposition ~\ref{prop:timeDeriv} we already know that
	\begin{align*}
		\frac{1}{\tau} \norm{\overline w_{\tau, 0}^{\gamma+1}(\cdot + \tau)-\overline w_{\tau, 0}^{\gamma+1}(t)}_{L^2(0,T-\tau;L^2(\Omega))}\leq C, \\
		\norm{ \overline w_{\tau, 0}^{\gamma+1}}^2_{L^\infty(0,T;H^1(\Omega))}
		\leq C,
	\end{align*}
	where both constants are independent of $\tau>0$.
	Therefore, we can apply \cite[Thm. 1]{DreherJuengel} to obtain the precompactness of $(\overline w_{\tau, 0}^{\gamma+1})_{\tau}$ in $L^q(0,T;L^2(\Omega))$. Owing to the uniform boundedness, this also implies precompactness in $L^q(\Omega_T)$.
	By considering the difference between
	$\overline{w}_{\tau,0}^{\,\gamma+1}$ and
	$\widehat{w_{\tau,0}^{\,\gamma+1}}$, we can also show the precompactness of
	$\widehat{w_{\tau,0}^{\,\gamma+1}}$ in $L^q(\Omega_T)$.

	In view of Corollary~\ref{cor:H1L2_LinfH1_gammap1}, the Aubin--Lions lemma is applicable, which yields the precompactness of $(\widehat{w_{\tau,\epsilon}^{\gamma +1 }})_\tau$ in $C([0,T];L^2(\Omega))$. Moreover, using the uniform boundedness, one further obtains precompactness in $C([0,T];L^q(\Omega))$.
\end{proof}

\begin{remark}
	\label{rem:strong-convergence}
	By extracting a further subsequence from $(\overline w_{\tau, 0}^{\gamma+1})_{\tau}$, we obtain convergence a.e. Consequently, $\overline w_{\tau, 0}$ converges to $w$ a.e. Owing to the uniform boundedness of $(\overline w_{\tau, 0})_{\tau}$ in
	$L^\infty(\Omega_T)$, the dominated convergence theorem yields the strong convergence of $\overline w_{\tau, 0}$ in $L^q(\Omega_T)$ $(1\le q<\infty)$.
	The same holds for $\widehat{w}_{\tau,0}$.
\end{remark}

\begin{lemma}
	\label{lem:conv-growth}
	For any $\varphi \in L^2(0,T;H^1(\Omega))$ there holds
	\begin{align*}
		\int_0^T\int_\Omega R_u(\overline u_{\tau, 0}, \overline v_{\tau, 0}, \overline w_{\tau, 0}) \varphi\dx x \dx t \to \int_0^T\int_\Omega R_u(u, v, w) \varphi \dx x \dx t,
	\end{align*}
	and
	\begin{align*}
		\int_0^T\int_\Omega R_v(\overline u_{\tau, 0}, \overline v_{\tau, 0}, \overline w_{\tau, 0}) \varphi\dx x \dx t \to \int_0^T\int_\Omega R_v(u, v, w) \varphi \dx x \dx t,
	\end{align*}
	up to a subsequence, as $\tau \to 0$.
\end{lemma}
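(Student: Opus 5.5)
The plan is to exploit the fact that $R_u$ and $R_v$ are \emph{linear} in $(u,v)$, with coefficients depending only on $w$. Writing
\[
R_u(\overline u_{\tau,0},\overline v_{\tau,0},\overline w_{\tau,0}) = \overline u_{\tau,0} F_u(\overline w_{\tau,0}) + \overline v_{\tau,0} G_u(\overline w_{\tau,0}),
\]
the passage to the limit becomes a weak–strong product argument. The weak factors are $\overline u_{\tau,0},\overline v_{\tau,0}$, which converge weakly-* in $L^\infty(\Omega_T)$ by Remark~\ref{rem:weak-*-convergence}. The strong factors are $F_u(\overline w_{\tau,0}),G_u(\overline w_{\tau,0})$.

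\textbf{Step 1: strong convergence of the coefficients.} By Remark~\ref{rem:strong-convergence}, along a subsequence $\overline w_{\tau,0}\to w$ a.e.\ in $\Omega_T$. Since $F_u$ and $G_u$ are continuous, $F_u(\overline w_{\tau,0})\to F_u(w)$ and $G_u(\overline w_{\tau,0})\to G_u(w)$ a.e. The bound $0\le \overline w_{\tau,0}\le M$ is inherited from Lemma~\ref{lem:buffer-this-didnt-have-a-label} through the weak-* limit $\epsilon\to0$, since the set $[0,M]$ is closed and convex. Hence these coefficients are bounded by $M^R$ from \eqref{def:MR}. Dominated convergence then gives strong convergence in $L^q(\Omega_T)$ for every $q<\infty$, in particular in $L^2(\Omega_T)$.

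\textbf{Step 2: product limit.} Fix $\varphi\in L^2(0,T;H^1(\Omega))\subset L^2(\Omega_T)$ and split
\[
\int_0^T\!\!\int_\Omega \overline u_{\tau,0}F_u(\overline w_{\tau,0})\varphi - uF_u(w)\varphi
= \int_0^T\!\!\int_\Omega \overline u_{\tau,0}\big(F_u(\overline w_{\tau,0})-F_u(w)\big)\varphi + \int_0^T\!\!\int_\Omega (\overline u_{\tau,0}-u)F_u(w)\varphi .
\]
The first term is bounded by $M\|F_u(\overline w_{\tau,0})-F_u(w)\|_{L^2(\Omega_T)}\|\varphi\|_{L^2(\Omega_T)}\to0$ by Step 1. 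The second term tends to zero by the weak-* convergence in $L^\infty(\Omega_T)$, since $F_u(w)\varphi\in L^1(\Omega_T)$. The $G_u$ term is handled identically with $\overline v_{\tau,0}$. The statement for $R_v$ follows verbatim with $F_v,G_v$.

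\textbf{Main obstacle.} The only delicate point is Step 1, which requires a.e.\ convergence of the piecewise constant $\overline w_{\tau,0}$ itself rather than of $\overline w_{\tau,0}^{\gamma+1}$. This is supplied by Lemma~\ref{lem:rc-nonlinearity} together with Remark~\ref{rem:strong-convergence}: the map $r\mapsto r^{1/(\gamma+1)}$ is continuous on $[0,\infty)$. We must also make sure the subsequence used here is the same one along which the weak-* limits $u,v$ were identified. This is handled by extracting all subsequences successively.
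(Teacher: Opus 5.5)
Your proposal is correct and follows essentially the same route as the paper. You use the identical splitting of $\int_0^T\int_\Omega \bigl(\overline u_{\tau,0}F_u(\overline w_{\tau,0}) - uF_u(w)\bigr)\varphi$ into a term controlled by the strong convergence of $\overline w_{\tau,0}$ and a term handled by the weak-* convergence of $\overline u_{\tau,0}$ in $L^\infty(\Omega_T)$. You also make explicit what the paper leaves implicit: that $F_u(\overline w_{\tau,0})\to F_u(w)$ strongly follows from a.e.\ convergence, continuity of $F_u$, the uniform bound $M^R$ and dominated convergence, and that all subsequences are extracted successively.
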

\begin{proof}
	This is a consequence of the weak-* convergence of $(\overline u_{\tau, 0})_{\tau}, (\overline v_{\tau, 0})_{\tau}$ in $L^\infty(\Omega_T)$ and the strong convergence of $(\overline w_{\tau, 0})_{\tau}$ in any $L^q(\Omega_T)$. Here, we only show this for one term, as all other terms can be treated analogously. We estimate
	\begin{align}
		 & \abs*{\int_0^T\int_\Omega u_{\tau, 0} F_u(\overline w_{\tau, 0}) \varphi\dx x \dx t -\int_0^T\int_\Omega u F_u( w) \varphi\dx x \dx t}                                                \\
		 & \leq \abs*{\int_0^T\int_\Omega u_{\tau, 0} (F_u(w) - F_u(\overline w_{\tau, 0})) \varphi\dx x \dx t}+\abs*{\int_0^T\int_\Omega (\overline u_{\tau, 0} - u) F_u(w) \varphi\dx x \dx t} \\
		 & \to 0,
	\end{align}
	as $\tau \to 0$, by the convergences above.
\end{proof}

Using the interpolations,
Eq.~\eqref{eq:lim-eps-to-0-w} can be rewritten as follows:
\begin{align}
	\label{eq:w-tau-0}
	\partialt {\widehat w_{\tau, 0} } & =  \frac{\gamma}{\gamma + 1}\Delta \overline w_{\tau, 0}^{\gamma + 1}+(R_u+R_v)(\overline u_{\tau, 0},\overline v_{\tau, 0},\overline w_{\tau, 0}).
\end{align}
Taking into account the convergence results obtained above, we can pass to the limit in the weak formulation of Eq.~\eqref{eq:w-tau-0}, and the limit satisfies the following equation in the weak sense.
\begin{align}
	\label{eq:w}
	\partialt  w & =  \frac{\gamma}{\gamma + 1}\Delta  w^{\gamma + 1}+(R_u+R_v)(u, v,w).
\end{align}
In order to pass to the limit in the equations for $u$ and $v$, we establish the following proposition.

\begin{proposition}[Strong Compactness of Gradient]
	The sequences $ (\overline w^{\gamma +1 }_{\tau,0})_{\tau}$ and $ (\widehat{ w^{\gamma +1 }_{\tau,0}})_{\tau}$ converge strongly to $ w^{\gamma +1 } $ in $L^2(0,T;H^1(\Omega))$, as $\tau \to 0$.
\end{proposition}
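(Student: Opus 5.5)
We argue as in Proposition~\ref{prop:strong-convergence-gradwgp1-disc}, now in space--time: test the difference between the discrete equation \eqref{eq:w-tau-0} and the limit equation \eqref{eq:w} by the difference of the nonlinearities, and show that every term other than the gradient term vanishes as $\tau\to 0$. Set $z_\tau := \overline w_{\tau,0}^{\gamma+1}-w^{\gamma+1}$. By Remark~\ref{rem:weak-*-convergence}, $z_\tau\in L^2(0,T;H^1(\Omega))$ and it is uniformly bounded there, so it is an admissible test function in the weak formulations of both \eqref{eq:w-tau-0} and \eqref{eq:w}. Subtracting the two formulations and testing with $z_\tau$ gives
\begin{align*}
	\frac{\gamma}{\gamma+1}\int_0^T\!\!\int_\Omega |\nabla z_\tau|^2 \dx x\dx t
	= & -\int_0^T \skp*{\partial_t \widehat w_{\tau,0}-\partial_t w, z_\tau}_{H^1(\Omega)',H^1(\Omega)}\dx t \\
	  & +\int_0^T\!\!\int_\Omega \big((R_u+R_v)(\overline u_{\tau,0},\overline v_{\tau,0},\overline w_{\tau,0})-(R_u+R_v)(u,v,w)\big) z_\tau \dx x\dx t .
\end{align*}

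\emph{Reaction term.} The reaction terms are bounded in $L^\infty(\Omega_T)$ by $2MM^R$. By Lemma~\ref{lem:rc-nonlinearity}, $z_\tau\to 0$ strongly in $L^2(\Omega_T)$. Hence the reaction integral tends to zero.

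\emph{Time-derivative term.} This is the main obstacle, because it is only a duality pairing: $\partial_t\widehat w_{\tau,0}-\partial_t w$ is merely bounded in $L^2(0,T;H^1(\Omega)')$ by Lemma~\ref{lem:time-regularity}, and it converges only weakly. Meanwhile $z_\tau$ is only bounded in $L^2(0,T;H^1(\Omega))$, not strongly convergent in that space. So we cannot simply take a product of a strong and a weak limit. I would avoid the duality pairing by exploiting the $L^2$ time regularity of the nonlinearity. Concretely, split $z_\tau = \big(\overline w_{\tau,0}^{\gamma+1}-\widehat{w_{\tau,0}^{\gamma+1}}\big) + \big(\widehat{w_{\tau,0}^{\gamma+1}}-w^{\gamma+1}\big)$.

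For the first piece, pair against the equation, i.e.\ replace $\partial_t\widehat w_{\tau,0}$ by $\frac{\gamma}{\gamma+1}\Delta\overline w^{\gamma+1}_{\tau,0}+R$. This yields integrals of $\nabla\overline w^{\gamma+1}_{\tau,0}$, which is bounded in $L^\infty(0,T;L^2)$, against $\nabla(\overline w^{\gamma+1}_{\tau,0}-\widehat{w^{\gamma+1}_{\tau,0}})$. I do not expect this to be small directly. Instead I would use the discrete structure: on $(t^{k-1},t^k]$ the difference equals $\frac{t^k-t}{\tau}\big(w_k^{\gamma+1}-w_{k-1}^{\gamma+1}\big)$, and the pairing $\int(w_k-w_{k-1})(w_k^{\gamma+1}-w_{k-1}^{\gamma+1})\ge0$ has a good sign. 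Moreover, the telescoping used in Proposition~\ref{prop:timeDeriv} controls $\sum_k\norm{\nabla(w_k^{\gamma+1}-w_{k-1}^{\gamma+1})}^2_{L^2}$ uniformly. After multiplication by $\tau$, this makes the first piece $O(\sqrt\tau)$ in $L^2(0,T;H^1)$, so its contribution vanishes.

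For the second piece, both $\widehat w_{\tau,0}$ and $\widehat{w_{\tau,0}^{\gamma+1}}$ are now time-continuous. I would use the chain-rule (convexity) identity $\int_0^T\skp{\partial_t \widehat w,\widehat{w^{\gamma+1}}}\approx \int_\Omega \Phi(\widehat w(T))-\Phi(\widehat w(0))$ with $\Phi(s)=s^{\gamma+2}/(\gamma+2)$, up to a nonnegative discrete defect coming from convexity. The same identity holds for the limit $w$. Combining these with the weak convergence of $\partial_t\widehat w_{\tau,0}$ and the strong convergence of $\widehat{w^{\gamma+1}_{\tau,0}}$ in $C([0,T];L^q)$ from Lemma~\ref{lem:rc-nonlinearity}, the end-point terms converge, because $\widehat w_{\tau,0}(T)\to w(T)$ strongly. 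The cross terms converge by weak--strong pairing. The defect has a favourable sign, so it can be dropped in the inequality for $\int|\nabla z_\tau|^2$.

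Altogether this gives $\limsup\int|\nabla z_\tau|^2\le 0$. Together with $z_\tau\to0$ in $L^2(\Omega_T)$, this is strong convergence of $\overline w^{\gamma+1}_{\tau,0}$ in $L^2(0,T;H^1(\Omega))$. The claim for $\widehat{w^{\gamma+1}_{\tau,0}}$ then follows from the $O(\sqrt\tau)$ estimate on the difference of the two interpolations obtained above.
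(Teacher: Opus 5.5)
Your setup matches the paper: testing with $z_\tau$, the reaction term, and the endpoint convergence via Lemma~\ref{lem:rc-nonlinearity}. The problem is the step you yourself call the main obstacle, which fails as written. After your split, the second piece contains
\[
\int_0^T\langle\partial_t\widehat w_{\tau,0},\widehat{w_{\tau,0}^{\gamma+1}}\rangle\dx t=\sum_{k=1}^N\int_\Omega (w_k-w_{k-1})\,\frac{w_{k-1}^{\gamma+1}+w_k^{\gamma+1}}{2}\dx x .
\]
This is the trapezoidal rule for $\Phi(w_k)-\Phi(w_{k-1})=\int_{w_{k-1}}^{w_k}s^{\gamma+1}\dx s$. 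Because $s^{\gamma+1}$ is convex, the defect $D_k$ (trapezoid minus integral) has the sign of $w_k-w_{k-1}$. For example, $\gamma=1$, $w_{k-1}=1$, $w_k=0$ gives $D_k=-\tfrac12+\tfrac13<0$. This pairing enters the identity for $\int|\nabla z_\tau|^2$ with a minus sign, so you need $\sum_k\int_\Omega D_k\dx x\ge -o(1)$. That cannot come from a sign argument, since $w$ decreases on parts of $\Omega$. Your claim that ``the defect has a favourable sign'' is therefore false.

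The step can be repaired with estimates you already have. Pointwise, $|D_k|\le\frac12|w_k-w_{k-1}|\,|w_k^{\gamma+1}-w_{k-1}^{\gamma+1}|$ by monotonicity of $s^{\gamma+1}$. Both this bound and $|w_k^{(\gamma+2)/2}-w_{k-1}^{(\gamma+2)/2}|^2$ are homogeneous of degree $\gamma+2$ in $(w_{k-1},w_k)$, so $|D_k|\le C_\gamma|w_k^{(\gamma+2)/2}-w_{k-1}^{(\gamma+2)/2}|^2$. Summing gives $\sum_k\int_\Omega|D_k|\dx x\le C\tau\|\partial_t\widehat{w^{(\gamma+2)/2}_{\tau,0}}\|^2_{L^2(\Omega_T)}\le C\tau$ by Proposition~\ref{prop:timeDeriv}.

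Alternatively, use the good-sign term from your first piece, $-\frac12\sum_k\int_\Omega(w_k-w_{k-1})(w_k^{\gamma+1}-w_{k-1}^{\gamma+1})\dx x$. Added to $-\sum_k\int_\Omega D_k\dx x$, it gives $-\sum_k\int_\Omega\big[(w_k-w_{k-1})w_k^{\gamma+1}-\Phi(w_k)+\Phi(w_{k-1})\big]\dx x\le 0$. This is exactly the paper's argument. The paper never splits $z_\tau$; it pairs $\partial_t\widehat w_{\tau,0}$ directly with the piecewise constant (implicit) $\overline w_{\tau,0}^{\gamma+1}$. Convexity of $s^{\gamma+2}/(\gamma+2)$ then gives the one-sided bound \eqref{ineq:I1} with no defect to estimate, and the other three terms converge weakly against fixed functions.

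There is a second, smaller point. Your $O(\sqrt\tau)$ bound on $\overline w^{\gamma+1}_{\tau,0}-\widehat{w^{\gamma+1}_{\tau,0}}$ in $L^2(0,T;H^1(\Omega))$ needs a uniform bound on $\sum_k\|\nabla(w_k^{\gamma+1}-w_{k-1}^{\gamma+1})\|^2_{L^2(\Omega)}$. The proof of Proposition~\ref{prop:timeDeriv} does not provide this, because its Young step on $I_k+I\!\!I_k$ absorbs exactly that term. The bound does hold: test \eqref{eq:lim-eps-to-0-w} with $w_k^{\gamma+1}-w_{k-1}^{\gamma+1}$, where no $\epsilon$-cross terms appear.

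With both fixes your route works. It even gives the two-sided limit of the time term, and a concrete proof of the claim for $\widehat{w^{\gamma+1}_{\tau,0}}$, which the paper only sketches. The cost is two quantitative estimates that the paper's convexity argument avoids.
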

\begin{proof}
	Let us write $\overline R_{\tau, 0} := (R_u+R_v)(\overline u_{\tau, 0},\overline v_{\tau, 0},\overline w_{\tau, 0})$  and $R := (R_u+R_v)(u,v,w)$. Subtracting Eq.~\eqref{eq:w} from Eq.~\eqref{eq:w-tau-0}, we find
	\begin{align*}
		\fpartial t (\widehat w_{\tau, 0} - w)  - \frac{\gamma}{\gamma + 1}\Delta (\overline w_{\tau, 0}^{\gamma + 1} - w^{\gamma + 1}) = \overline R_{\tau, 0} - R.
	\end{align*}
	We test the equation by $(\overline w_{\tau, 0}^{\gamma +1} - w^{\gamma + 1})$ and get
	\begin{align*}
		\frac{\gamma}{\gamma + 1} & \int_0^T\int_\Omega \abs*{\nabla \overline w_{\tau, 0}^{\gamma +1 } -\nabla w^{\gamma +1 }}^2 \dx x \dx t                                              \\
		=                         & -\int_0^T \skp*{\partial_t \widehat w_{\tau, 0} - \partial_t w, \overline w_{\tau, 0}^{\gamma +1 } - w^{\gamma +1 }}_{H^{-1}(\Omega),H^1(\Omega)}\dx t \\
		                          & +\int_0^T\int_\Omega (\overline R_{\tau, 0} - R)(\overline w_{\tau, 0}^{\gamma +1 } - w^{\gamma +1 })\dx x \dx t                                       \\
		=                         &
		\mathcal I_{\tau, 0}^{1} + \mathcal I_{\tau, 0}^{2} + \mathcal I_{\tau, 0}^{3} + \mathcal I^{4} +  \mathcal I_{\tau, 0}^R,
	\end{align*}
	where
	\begin{align*}
		\left\{
		\begin{array}{rl}
			\mathcal I_{\tau, 0}^{1} \!\!\! & := -\ds\int_0^T\skp*{\partial_t \widehat w_{\tau, 0}, \overline w_{\tau, 0}^{\gamma +1 }}_{H^{1}(\Omega)', H^1(\Omega)} \dx t, \\[1.5em]
			\mathcal I_{\tau, 0}^{2} \!\!\! & := \phantom{-}\ds\int_0^T\skp*{\partial_t \widehat w_{\tau, 0}, w^{\gamma +1 }}_{H^{1}(\Omega)', H^1(\Omega)} \dx t,           \\[1.5em]
			\mathcal I_{\tau, 0}^{3} \!\!\! & := \phantom{-}\ds\int_0^T\skp*{\partial_t  w, \overline w_{\tau, 0}^{\gamma +1 }}_{H^{1}(\Omega)', H^1(\Omega)} \dx t,         \\[1.5em]
			\mathcal I^{4} \!\!\!           & := -\ds \int_0^T\skp*{\partial_t  w,  w^{\gamma +1 }}_{H^{1}(\Omega)', H^1(\Omega)} \dx t,                                     \\[1.5em]
			\mathcal I_{\tau, 0}^R \!\!\!   & := \ds\int_0^T\int_\Omega (\overline R_{\tau, 0} - R)(\overline w_{\tau, 0}^{\gamma +1 } - w^{\gamma +1 })\dx x \dx t.
		\end{array}
		\right.
	\end{align*}
	Let us begin treating each of the integrals, starting with $\mathcal I_{\tau, 0}^1$. We observe that
	\begin{align*}
		\mathcal I_{\tau, 0}^{1} & = -\ds\int_0^T\skp*{\partial_t \widehat w_{\tau, 0}, \overline w_{\tau, 0}^{\gamma +1 }}_{H^{1}(\Omega)', H^1(\Omega)} \dx t         \\[0.5em]
		                         & = -\sum_{k=0}^{N-1}\ds\int_{t_k}^{t_{k+1}}\skp*{\frac{w_{k+1}-w_k}{\tau},  w_{k+1}^{\gamma +1 }}_{H^{1}(\Omega)', H^1(\Omega)} \dx t
	\end{align*}
	Since $f:s\to s^{\gamma+2}/(\gamma +2)$ is convex, we get $f(s)-f(y)\leq f'(s)(s-y)$. By taking $s=w_{k+1}$ and $y=w_k$ we derive
	\begin{align}
		\label{ineq:I1}
		\begin{split}
			\mathcal I_{\tau, 0}^{1}
			 & \leq
			-\frac{1}{\gamma+2} \sum_{k=0}^{N-1}\ds\left( \int_\Omega w_{k+1}^{\gamma +2}\dx x -\int_\Omega w_{k}^{\gamma +2} \dx x \right) \\
			 & =-\frac{1}{\gamma+2} \ds \left( \int_\Omega w_{N}^{\gamma +2}\dx x-\int_\Omega w_{0}^{\gamma +2}\dx x \right).
		\end{split}
	\end{align}
	In view of Lemma~\ref{lem:rc-nonlinearity}, we have the strong convergence of a subsequence of
	$(\widehat{w_{\tau, 0}^{\gamma + 1}})_\tau$ to $w ^{\gamma + 1}$ in $C([0,T],L^q(\Omega))$ ($1\le q<\infty$), which implies
	\[
		\int_\Omega \left( \widehat{w_{\tau, 0}^{\gamma + 1}}(t) \right)^q \dx x
		\to
		\int_\Omega \left( w^{\gamma + 1}(t) \right)^q\dx x,
	\]
	for all $t\in [0,T]$, as $\tau \to 0$. Hence,
	\begin{align*}
		\int_\Omega w_{N}^{\gamma +2}\dx x
		 & =
		\int_\Omega w_{N}^{\gamma +1\frac{\gamma+2}{\gamma+1}}\dx x
		=
		\int_\Omega \left( \widehat{w_{\tau, 0}^{\gamma + 1}}(T) \right)^{\frac{\gamma+2}{\gamma+1}}\dx x \\
		 & \to
		\int_\Omega \left( {w^{\gamma + 1}}(T) \right)^{\frac{\gamma+2}{\gamma+1}}\dx x
		=
		\int_\Omega {w^{\gamma + 2}}(T)\dx x.
	\end{align*}

	This yields that the right-hand side of \eqref{ineq:I1} converges to $\mathcal I^4$.
	Because of weak convergence, it follows that $\mathcal I_{\tau, 0}^2 \to - \mathcal I^4$, and $\mathcal I_{\tau, 0}^3 \to -\mathcal I^4$, as $\tau \to 0$.

	Next, let us turn our attention to the integral $\mathcal I_{\tau, 0}^R$ and prove $\mathcal I_{\tau, 0}^R\to 0$, as $\tau \to 0$. We know from Lemma ~\ref{lem:rc-nonlinearity} that up to a subsequence $\overline w^{\gamma+1}_{\tau, 0}$ converges to $ w^{\gamma+1}_{\tau, 0}$ in $L^q(\Omega_T)$, $q\geq 1$. Therefore, as a byproduct, we get that $\overline w_{\tau, 0}$ converges almost everywhere. Furthermore, we know that $\norm{\overline w_{\tau, 0}}_{L^\infty(\Omega_T)}\leq M$. Therefore, we get by Lebesgue's dominated convergence theorem that $\overline w_{\tau, 0}\to w$, strongly in $L^p(\Omega_T)$, $1\leq p< \infty$, as $\tau \to 0$.

	Since $\overline R_{\tau, 0}$ and $R$ are bounded we end up with
	\begin{align}
		\mathcal I_{\tau, 0}^R & = \ds\int_0^T\int_\Omega (\overline R - R)(\overline w_{\tau, 0}^{\gamma +1 } - w^{\gamma +1 })\dx x \dx t \to 0,  \quad \tau \to 0.
	\end{align}

	Hence,
	\begin{align}
		\lim_{\tau \to 0} \int_0^T\int_\Omega \abs*{\nabla \overline w_{\tau, 0}^{\gamma +1 } -\nabla w^{\gamma +1 }}^2 \dx x \dx t = 0,
	\end{align}
	along a suitable subsequence.
	By considering the difference between
	$\overline{w}_{\tau,0}^{\,\gamma+1}$ and
	$\widehat{w_{\tau,0}^{\,\gamma+1}}$, we can also show the strong convergence of
	$\widehat{w_{\tau,0}^{\,\gamma+1}}$ to $w^{\gamma +1 }$ in $L^2(0,T;H^1(\Omega))$.
\end{proof}

Now, having garnered all necessary compactness results we may now draw our attention to the convergence result. To this end, let us revisit \eqref{eq:intro-scheme}, i.e.,
\begin{align*}
	\begin{split}
		\left\{
		\begin{array}{rll}
			\dfrac1\tau(u_k - u_{k-1}) - \nabla \cdot\prt*{u_k \nabla w_k^\gamma} \!\!\!  & = R_u(u_k,v_k,w_k), \\[1em]
			\dfrac1\tau(v_k - v_{k-1})  - \nabla \cdot\prt*{v_k \nabla w_k^\gamma} \!\!\! & = R_v(u_k,v_k,w_k). \\
		\end{array}
		\right.
	\end{split}
\end{align*}
Thus, for any given $\varphi \in L^2(0,T;H^1(\Omega))$, we have
\begin{align}
	\label{eq:almost-weak-form}
	\begin{split}
		\left\{
		\begin{array}{rll}
			\ds \int_0^T\left\langle \partialt {\widehat u_{\tau, 0}}, \varphi \right \rangle_{H^1(\Omega)', H^1(\Omega)}\dx t
			  & + \ds\int_0^T\int_\Omega \overline u_{\tau, 0}  \nabla \overline w_{\tau, 0}^\gamma \cdot \nabla \varphi \dx x \dx t \!\! \\
			= & \ds  \int_0^T\int_\Omega R_u(\overline u_{\tau, 0}, \overline v_{\tau, 0} , \overline w_{\tau, 0}) \varphi\dx x \dx t,    \\[1.5em]
			\ds \int_0^T\left\langle \partialt {\widehat v_{\tau, 0}}, \varphi \right \rangle_{H^1(\Omega)', H^1(\Omega)}\dx t
			  & + \ds\int_0^T\int_\Omega \overline v_{\tau, 0}  \nabla \overline w_{\tau, 0}^\gamma \cdot \nabla \varphi \dx x \dx t \!\! \\
			= & \ds \int_0^T\int_\Omega R_v(\overline u_{\tau, 0}, \overline v_{\tau, 0} , \overline w_{\tau, 0}) \varphi\dx x \dx t.
		\end{array}
		\right.
	\end{split}
\end{align}
In the following lemma we show the convergence of the velocity term
\begin{lemma}
	\label{lem:conv-velo}
	For any $\varphi \in L^2(0,T;H^1(\Omega))$ there holds
	\begin{align*}
		\int_0^T\int_\Omega \overline u_{\tau, 0} \nabla \overline w_{\tau, 0}^{\gamma} \cdot \nabla \varphi \dx x \dx t \to \int_0^T\int_\Omega u \nabla w^{\gamma} \cdot \nabla \varphi \dx x \dx t,
	\end{align*}
	and
	\begin{align*}
		\int_0^T\int_\Omega \overline v_{\tau, 0} \nabla \overline w_{\tau, 0}^{\gamma} \cdot \nabla \varphi \dx x \dx t \to \int_0^T\int_\Omega v \nabla w^{\gamma} \cdot \nabla \varphi\dx x \dx t,
	\end{align*}
	up to a subsequence, as $\tau \to 0$.
\end{lemma}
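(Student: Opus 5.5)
We follow the same device used in the $\epsilon\to0$ limit in Section~\ref{sec:compconv}: rewrite the velocity term so that the gradient falls on $w^{\gamma+1}$, which converges strongly. On the set $\{\overline w_{\tau,0}>0\}$ we have $\overline u_{\tau,0}\nabla \overline w_{\tau,0}^{\gamma} = \frac{\gamma}{\gamma+1}\,\overline c_{\tau}\nabla \overline w_{\tau,0}^{\gamma+1}$ with
\[
\overline c_{\tau} := \frac{\overline u_{\tau,0}}{\overline w_{\tau,0}}\chi_{\{\overline w_{\tau,0}>0\}} .
\]
On $\{\overline w_{\tau,0}=0\}$ both sides vanish: $\overline u_{\tau,0}=0$ there, and $\nabla \overline w_{\tau,0}^{\gamma+1}=0$ a.e.\ on level sets of a Sobolev function. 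This identity is justified pointwise in time since $\overline w_{\tau,0}^{\gamma}$ and $\overline w_{\tau,0}^{\gamma+1}$ lie in $L^2(0,T;H^1(\Omega))$ by Lemma~\ref{lem:space-regularity-nonlinearity}. Since $0\le \overline u_{\tau,0}\le \overline w_{\tau,0}$, we get $0\le \overline c_\tau\le 1$. Hence, up to a subsequence, $\overline c_\tau \overset{*}{\rightharpoonup} c$ weakly-* in $L^\infty(\Omega_T)$ with $0\le c\le 1$.

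The limit passage is then a weak--strong product argument. By the preceding proposition (Strong Compactness of Gradient), $\nabla \overline w_{\tau,0}^{\gamma+1}\to \nabla w^{\gamma+1}$ strongly in $L^2(\Omega_T)$, so $\nabla \overline w_{\tau,0}^{\gamma+1}\cdot\nabla\varphi\to \nabla w^{\gamma+1}\cdot\nabla\varphi$ strongly in $L^1(\Omega_T)$. Combined with the weak-* convergence of $\overline c_\tau$ in $L^\infty$, this gives
\[
\int_0^T\!\!\int_\Omega \overline u_{\tau,0}\nabla \overline w_{\tau,0}^\gamma\cdot\nabla\varphi \dx x\dx t \to \frac{\gamma}{\gamma+1}\int_0^T\!\!\int_\Omega c\,\nabla w^{\gamma+1}\cdot\nabla\varphi\dx x\dx t .
\]
The same reasoning applied to $1-\overline c_\tau$ handles the $v$-term, with $\overline v_{\tau,0}=(1-\overline c_\tau)\overline w_{\tau,0}$ on $\{\overline w_{\tau,0}>0\}$.

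The remaining and main step is identifying the limit, i.e.\ showing $c\,\nabla w^{\gamma+1} = \frac{\gamma+1}{\gamma}\, u\nabla w^{\gamma}$ a.e. We do this by identifying $cw=u$. By Remark~\ref{rem:strong-convergence}, $\overline w_{\tau,0}\to w$ strongly in $L^q(\Omega_T)$, and $\overline c_\tau\overset{*}{\rightharpoonup} c$. Therefore $\overline u_{\tau,0}=\overline c_\tau\overline w_{\tau,0}$, which holds everywhere including $\{\overline w_{\tau,0}=0\}$, converges weakly to $cw$. By uniqueness of weak limits, $u=cw$.

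It follows that on $\{w>0\}$,
\[
\frac{\gamma}{\gamma+1}\,c\nabla w^{\gamma+1}=c\,w\nabla w^\gamma = u\nabla w^\gamma ,
\]
and on $\{w=0\}$ both sides vanish a.e.: $\nabla w^{\gamma+1}=0$ a.e.\ there since $w^{\gamma+1}\in L^2(0,T;H^1(\Omega))$, and $u=0$ there. The delicate point is the meaning of $u\nabla w^\gamma$ when $\gamma$ is such that $w^\gamma$ has only the regularity from Lemma~\ref{lem:space-regularity-nonlinearity}, which needs $\gamma\ge(\gamma+1)/2$, i.e.\ $\gamma\ge1$. This holds by (H1), so $\nabla w^\gamma=\frac{\gamma}{\gamma+1} w^{-1}\nabla w^{\gamma+1}$ on $\{w>0\}$ by the chain rule. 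This completes the $u$-statement, and the $v$-statement follows verbatim.
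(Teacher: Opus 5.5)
Your proposal is correct and follows essentially the same route as the paper. You introduce the bounded ratio $\overline c_\tau=\overline u_{\tau,0}/\overline w_{\tau,0}$ (the paper's $\overline\eta_{\tau,0}$), identify its weak-* limit through $u=cw$ using the strong convergence of $\overline w_{\tau,0}$, and pass to the limit in $\frac{\gamma}{\gamma+1}\overline c_\tau\nabla\overline w_{\tau,0}^{\gamma+1}\cdot\nabla\varphi$ by a weak-*/strong product argument resting on the strong $L^2(\Omega_T)$ convergence of $\nabla\overline w_{\tau,0}^{\gamma+1}$. Your explicit remark that $\nabla w^{\gamma+1}=0$ a.e. on $\{w=0\}$ also supplies the justification the paper omits when it replaces $\int_{\{w>0\}}$ by $\int_\Omega$.
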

\begin{proof}
	Define $\eta_{\tau, 0} :=\frac{\overline u_{\tau, 0}}{\overline w_{\tau, 0}}\chi(\overline w_{\tau, 0})$.
	Since $0 \leq \overline \eta_{\tau, 0} \leq 1$, let us denote $\eta := L^\infty$-$\lim^*_{\tau \to 0} \overline \eta_{\tau, 0}$.
	If $w = L^2$-$\lim_{\tau \to 0} \overline w_{\tau, 0}$ and $u = L^\infty$-$\lim^*_{\tau \to 0 } \overline u_{\tau, 0}$, we see, on the one hand, that
	\begin{align*}
		\int_0^T\int_\Omega \zeta \overline u_{\tau, 0} \dx x \dx t \to \int_0^T\int_\Omega \zeta u \dx x \dx t.
	\end{align*}
	and on the other hand
	\begin{align*}
		\int_0^T\int_\Omega \zeta \overline \eta_{\tau, 0} \overline w_{\tau, 0}  \dx x \dx t \to \int_0^T\int_\Omega \zeta \eta w \dx x \dx t
	\end{align*}
	for all $\zeta \in L^1(\Omega_T)$,
	from where we conclude that $u = \eta w$.
	Now, we are ready to address the velocity term. Indeed, we may estimate
	\begin{align*}
		 & \abs*{\int_0^T\int_\Omega \overline u_{\tau, 0} \nabla \overline w_{\tau, 0}^\gamma \cdot \nabla \varphi \dx x \dx t - \int_0^T\int_\Omega u \nabla  w^\gamma \cdot \nabla \varphi \dx x \dx t }                                                               \\
		 & = \abs*{\int_0^T\int_{\overline w_{\tau, 0} > 0}  \overline u_{\tau, 0} \nabla  \overline w_{\tau, 0}^\gamma \cdot \nabla \varphi \dx x \dx t - \int_0^T\int_{w>0} u \nabla w^\gamma \cdot \nabla \varphi \dx x \dx t }                                        \\
		 & = \abs*{\int_0^T\int_{\overline w_{\tau, 0} > 0}  \overline \eta_{\tau, 0} \overline w_{\tau, 0} \nabla \overline w_{\tau, 0}^\gamma \cdot \nabla \varphi \dx x \dx t - \int_0^T\int_{w>0} \eta w \nabla  w^\gamma \cdot \nabla \varphi \dx x \dx t }          \\
		 & = \frac{\gamma}{\gamma + 1}\abs*{\int_0^T\int_{\overline w_{\tau, 0} > 0}  \overline \eta_{\tau, 0} \nabla \overline w_{\tau, 0}^{\gamma+1} \cdot \nabla \varphi \dx x \dx t - \int_0^T\int_{w>0} \eta \nabla  w^{\gamma+1} \cdot \nabla \varphi \dx x \dx t } \\
		 & = \frac{\gamma}{\gamma + 1}\abs*{\int_0^T\int_{\Omega}  \left(\overline \eta_{\tau, 0} \nabla \overline w_{\tau, 0}^{\gamma+1} - \eta \nabla  w^{\gamma+1}\right) \cdot \nabla \varphi\dx x \dx t }                                                            \\
		 & \leq \frac{\gamma}{\gamma + 1}\abs*{\int_0^T\int_{\Omega}  (\overline \eta_{\tau, 0} -\eta) \nabla  w^{\gamma+1} \cdot \nabla \varphi\dx x \dx t }                                                                                                             \\
		 & \qquad +\frac{\gamma}{\gamma + 1}\abs*{\int_0^T\int_{\Omega}  \overline \eta_{\tau, 0} \left(\nabla \overline w_{\tau, 0}^{\gamma+1} - \nabla  w^{\gamma+1}\right) \cdot \nabla \varphi\dx x \dx t }                                                           \\
		 & \to 0,
	\end{align*}
	by weak * convergence of $\overline \eta_{\tau, 0} \to^* \eta$ and strong convergence of $\nabla \overline w_{\tau, 0}^{\gamma+1} \to \nabla w^{\gamma+1}$.
\end{proof}

\begin{proof}[Proof of Theorem~\ref{thm:convergence}]
	We have now gathered all the ingredients needed to pass to the limit in System \eqref{eq:almost-weak-form}.
	By passing to the limit $\tau\to0$, it follows that $(u,v)$ is a weak solution of System \eqref{eq:cts-system}.

	This concludes the proof of Theorem~\ref{thm:convergence}.
\end{proof}

\bibliographystyle{plain}
\bibliography{references}

\end{document}